\numberwithin{equation}{section}
\theoremstyle{plain}
\newtheorem{theorem}{Theorem}[section]
\newtheorem{corollary}[theorem]{Corollary}
\newtheorem{lemma}[theorem]{Lemma}
\theoremstyle{remark}
\newtheorem{comments}[theorem]{Comments}
\newtheorem{remark}[theorem]{Remark}
\newtheorem*{remark*}{Remark}
\newtheorem*{remarks*}{Remarks}
\theoremstyle{definition}
\newtheorem*{acknowledgements*}{Acknowledgements}
\newtheorem*{assumption*}{Assumption}
\newtheorem*{definition*}{Definition}
\newtheorem*{notation*}{Notation}
\newtheorem*{notations*}{Notations}
\providecommand{\BS}[1]{\boldsymbol{#1}}
\providecommand{\C}[1]{\mathcal{#1}}
\providecommand{\D}[1]{\mathbb{#1}}
\newcommand{\dd}{\mathrm{d}}
\newcommand{\eul}{\mathrm{e}}
\newcommand{\ii}{\mathrm{i}}
\providecommand{\abs}[1]{\lvert#1\rvert}
\providecommand{\accol}[1]{\lbrace#1\rbrace}
\providecommand{\croch}[1]{\lbrack#1\rbrack}
\providecommand{\norm}[1]{\lVert#1\rVert}
\renewcommand{\Im}{\operatorname{Im}}
\newcommand{\model}{\operatorname{mod}}
\newcommand{\appr}{\mathrm{app}}
\newcommand{\cut}{\mathrm{cut}}
\newcommand{\diff}{\mathrm{dif}}
\newcommand{\ord}{\mathrm{O}}
\newcommand{\painl}{\mathrm{P}_{\text{IV}}}
\renewcommand{\Re}{\operatorname{Re}}
\begin{document}
\title[NLS with step-like background: transition zone]{The focusing NLS equation with step-like oscillating background: asymptotics in a transition zone}
\author[A. Boutet de Monvel]{Anne Boutet de Monvel}
\address{AB: Institut de Math\'ematiques de Jussieu-Paris Rive Gauche, Universit\'e de Paris, 75205 Paris Cedex 13, France.}
\email{anne.boutet-de-monvel@imj-prg.fr}
\author[J. Lenells]{Jonatan Lenells}
\address{JL: Department of Mathematics, KTH Royal Institute of Technology, 100 44 Stockholm, Sweden.}
\email{jlenells@kth.se}
\author[D. Shepelsky]{Dmitry Shepelsky}
\address{DS: B.~Verkin Institute for Low Temperature Physics and Engineering, 61103 Kharkiv, Ukraine.} 
\email{shepelsky@yahoo.com}
\date{}
\begin{abstract}
In a recent paper, we presented scenarios of long-time asymptotics for a solution of the focusing nonlinear Schr\"odinger equation whose initial data approach two different plane waves $A_j\eul^{\ii\phi_j}\eul^{-2\ii B_jx}$, $j=1,2$ at minus and plus infinity. In the shock case $B_1<B_2$ some scenarios include sectors of genus $3$, that is sectors $\xi_1<\xi<\xi_2$, $\xi\coloneqq\frac{x}{t}$ where the leading term of the asymptotics is expressed in terms of hyperelliptic functions attached to a Riemann surface $M(\xi)$ of genus $3$. The long-time asymptotic analysis in such a sector is performed in another recent paper. The present paper deals with the asymptotic analysis in a transition zone between two genus $3$ sectors $\xi_1<\xi<\xi_0$ and $\xi_0<\xi<\xi_2$. The leading term is expressed in terms of elliptic functions attached to a Riemann surface $\tilde M$ of genus $1$. A central step in the derivation is the construction of a local parametrix in a neighborhood of two merging branch points. We construct this parametrix by solving a model problem which is similar to the Riemann--Hilbert problem associated with the Painlev\'e IV equation.
\end{abstract}
\maketitle
\section{Introduction}  \label{sec:intro}

We consider the focusing nonlinear Schr\"odinger (NLS) equation
\begin{equation} \label{eq:nls}
\ii q_t+q_{xx}+2\abs{q}^2q=0,\qquad x\in\D{R},\quad t\geq 0,
\end{equation}
and a solution $q(x,t)$ of \eqref{eq:nls} such that
\begin{equation} \label{q0-limits}
q(x,0)\sim 
\begin{cases}
A_1\eul^{\ii\phi_1}\eul^{-2\ii B_1 x},&x\to -\infty,\\
A_2\eul^{\ii\phi_2}\eul^{-2\ii B_2 x},&x\to +\infty,
\end{cases}
\end{equation}
where $\accol{A_j,B_j,\phi_j}_1^2$ are real constants, $A_j>0$. Our general objective is the study of the long-time behavior of the solution $q(x,t)$ of \eqref{eq:nls}--\eqref{q0-limits}.

Equation \eqref{eq:nls} is a nonlinear PDE integrable by the Inverse Scattering Transform method \cite{AS}. The Riemann--Hilbert (RH) problem formalism of this method has proved to be highly efficient in studying various properties of solutions of integrable PDE, particularly, the long time asymptotics of the solution of the Cauchy problem \cites{DZ93, DVZ94} and the small dispersion limit \cites{DVZ97,KLM03}. The RH method, put into a rigorous shape by Deift and Zhou in \cite{DZ93},
was further developed in \cites{DVZ94,DVZ97} with the introduction of the so-called ``$g$-function mechanism''. This mechanism helps in the realization of the main idea of the method, which is to find a series of transformations of the original RH problem representation of the solution of the nonlinear equation in question leading ultimately to a model RH problem that can be solved explicitly. Moreover, the method makes it possible to obtain not only the main asymptotic term but also provides a way to derive rigorous error estimates, by solving appropriate local (``parametrix'') RH problems.

Expressing the $(x,t)$ dependence of the data for the associated RH problem (jump matrices and, if appropriate, residue conditions) in terms of $t$ (large parameter) and $\xi\coloneqq\frac{x}{t}$, it is natural to obtain large-$t$ asymptotics for a fixed $\xi$ or for $\xi$ varying in certain intervals, characterized by the same qualitative form of the asymptotic pattern and uniform error estimates for any compact subset of each interval. In terms of the model RH problem, such an interval is characterized by the same structure of the jump contour and jump condition, only some parameters (as functions of $\xi$) being varying. On the other hand, the interval's end points correspond to changes in this structure: either some parts of the contour shrink to single points or new parts emerge from specific points. Accordingly, uniform error estimates break down when approaching the interval's end points. Thus two other natural questions arise: do the asymptotics obtained for adjacent sectors (in the $(x,t)$ plane) match, and how can we describe the asymptotics when approaching (with $\xi$) the border of an asymptotic sector characterized by certain degeneration/restructuring of the RH problem input (contours and jumps).

It turns out that already for problems with ``zero boundary conditions'', i.e., when the solution is assumed to decay to $0$ as $\abs{x}\to\infty$, there exist various narrow transition regions in the $(x,t)$ plane, where the asymptotics has a qualitatively different form (related to a qualitatively different model RH problem) \cite{DVZ94}. So it is natural to expect transition regions also in the more complicated situation of ``non-zero boundary conditions'' described by \eqref{q0-limits} (for other types of non-zero boundary conditions, see \cites{BK14,BM16,BM17}).

Scenarios of long-time asymptotics of $q(x,t)$, where the $(x,t)$ half-plane is divided into sectors $\xi_1<\xi<\xi_2$ with qualitatively different asymptotics, are presented in \cite{BLS21}. That work was motivated by \cite{BV07}, where the asymptotic scenario was given corresponding to a particular range of parameters involved in \eqref{q0-limits}. In the ``shock case'' $B_1<B_2$ some scenarios include genus $3$ sectors where the leading term of the asymptotics of $q(x,t)$ is given in terms of theta functions attached to a hyperelliptic Riemann surface $M(\xi)$ of genus $3$. The asymptotic analysis in a genus $3$ sector is performed in \cite{BLS22}. The Riemann surface $M(\xi)$ is given by the equation (see \cite{BLS22}*{Section 3.2})
\begin{equation}  \label{M-def}
w^2=(k-E_1)(k-\bar E_1)(k-E_2)(k-\bar E_2)(k-\alpha)(k-\bar\alpha)(k-\beta)(k-\bar\beta)
\end{equation}
where $E_j=B_j+\ii A_j$, $B_1<B_2$, and $\alpha(\xi)$, $\beta(\xi)$ are critical points of the $g$-function $g(\xi,k)$, that is, zeros of $\dd g/\dd k$ (see \cite{BLS22}*{Section 3.3}). All branch points are distinct, in particular $\alpha(\xi)\neq\beta(\xi)$.

In this paper we consider a sector $\xi_1<\xi<\xi_2$ consisting of two genus $3$ sectors $\xi_1<\xi<\xi_0$ and $\xi_0<\xi<\xi_2$ separated by a point $\xi_0$ at which $\alpha(\xi)$ and $\beta(\xi)$ merge (in the symmetric case with $A_1=A_2$ and $B_1=-B_2$, we have $\xi_0=0$). Notice that a degeneration of genus $2$ (two-phase) solutions of \eqref{eq:nls} corresponding to the merging of two pairs of spectral points is considered in \cite{BG15}, where it is shown that it is possible to present the limiting behavior of the solution directly in terms of the solution of the degenerated RH problem, without resorting to the genus $2$ solution before degeneration.

The asymptotic theorem for the genus $3$ sector \cite{BLS22}*{Theorem 2.1} gives information on the asymptotics for $\xi\in(\xi_1+\delta,\xi_0-\delta)\cup(\xi_0+\delta,\xi_2-\delta)$. In fact, by choosing the $\epsilon$-disks centered at $\alpha$, $\bar\alpha$, $\beta$, $\bar\beta$ to have $\xi$-dependent radius $\epsilon=\epsilon(\xi)$ in the proof of the genus $3$ theorem, we see that the genus $3$ result is valid uniformly for all $\xi_0<\xi\leq\xi_2$ if we include the additional error term $\ord((t\epsilon^{3/2})^{-N})$. By choosing $\epsilon=\frac{\abs{\alpha-\beta}}{3}$, we infer that the genus $3$ theorem gives information on the solution whenever $t^{2/3}\abs{\alpha-\beta}\to\infty$. But the genus $3$ theorem does not give the asymptotics in a narrow region containing the line $\xi=\xi_0$.

Our goal here is to determine an asymptotic formula valid in a region containing the line $\xi=\xi_0$. Let $r(k)$ be the reflection coefficient (see \cite{BLS21}*{(2.36)}) defined by
\[
r(k)\coloneqq\frac{b^*(k)}{a(k)}
\]
where $a(k)$, $b(k)$ are the scattering coefficients (see \cite{BLS21}*{(2.15)}), and we write $f^*(k)\coloneqq\overline{f(\bar k)}$ for a complex-valued function $f(k)$. As in \cite{BLS21} and \cite{BLS22}, in order to avoid technicalities in the long-time analysis related to analytical approximations of spectral functions, we assume that $a(k)$ and $b(k)$ (and thus $r(k)$) can be analytically continued from the real line into the whole complex plane (see \eqref{eq:ic} below) and that $a(k)$ and $b(k)$ do not vanish for $\Im k\geq 0$. Define the complex-valued function $\tilde\nu(\xi)$ by
\begin{equation}  \label{eq:nutilde}
\tilde\nu(\xi)=\frac{1}{2\pi}\ln\left(1+r(\beta)r^*(\beta)\right),
\end{equation}
where $\beta\equiv\beta(\xi)$ and the branch of $\ln(1+rr^*)$ is fixed by requiring that $\ln(1+r(k)r^*(k))$ is a continuous function of $k\in\gamma_{(\mu,\beta)}$ whose value at $k=\mu$ is strictly positive ($\gamma_{(\mu,\beta)}$ is the contour from $\mu$ to $\beta$ in Figure~\ref{fig:jump-contour-3}). Let $\tilde\nu_0\coloneqq\tilde\nu(\xi_0)$ denote the value of $\tilde\nu$ at $\xi=\xi_0$. Since for $\abs{\Im\tilde\nu_0}\geq\frac{1}{2}$ we are not able to give any asymptotics we make the following assumption:

\begin{assumption*}
We suppose
\[ 
\Im\tilde\nu_0\in\left(-\frac{1}{2},\frac{1}{2}\right),
\]
i.e.\ $\arg\bigl(1+r(\beta(\xi_0))r^*(\beta(\xi_0))\bigr)\in(-\pi,\pi)$.
\end{assumption*}

\begin{remark*}
This assumption is at least satisfied in the symmetric case $A_1=A_2=A>0$ and $B_2=-B_1=B>0$ with $A>B$ provided $B$ is close to $A$. In that case $\xi_0=0$ and we have $\alpha(0)=\beta(0)=\ii\sqrt{A^2-B^2}$. As $B\uparrow A$ we have $\beta(0)\downarrow 0$ and $\abs{\Im\tilde\nu_0}\downarrow 0$. Indeed, if $\beta(\xi)$ is real $\tilde\nu(\xi)$ is also real. So, for $B$ close enough to $A$ we have $\Im\tilde\nu_0$ close to $0$, hence $\abs{\Im\tilde\nu_0}<1/2$.
\end{remark*}

\subsection{The transition zone}

We will consider the asymptotics in a sector $\D{S}$ defined by
\begin{equation}  \label{eq:wedge}
\D{S}\coloneqq\left\lbrace(x,t)\in\D{R}^2\,\Bigm\vert\,\xi\geq\xi_0,\ t\geq T,\ \abs{\alpha-\beta}<Mt^{-\frac{1+\abs{\Im\tilde\nu_0}}{2}-\delta}\right\rbrace,
\end{equation}
where the constants $\delta$, $M$, $T$ are strictly positive. The sector $\D{S}$ is characterized by the fact that the distance between $\alpha$ and $\beta$ shrinks faster than $t^{-\frac{1+\abs{\Im\tilde\nu_0}}{2}}$ as $t\to+\infty$. As $\xi\to\xi_0$, the branch points $\alpha(\xi)$ and $\beta(\xi)$ converge to a point $\alpha(\xi_0)=\beta(\xi_0)\in\D{C}^+$. Numerical computations suggest that $\abs{\alpha-\alpha(\xi_0)}\sim\abs{\xi-\xi_0}^{1/2}$ and $\abs{\beta-\beta(\xi_0)}\sim\abs{\xi-\xi_0}^{1/2}$ as $\xi\downarrow\xi_0$ (at least in the symmetric case). If this is correct, then
\begin{equation}  \label{eq:wedgebis}
\D{S}=\left\lbrace(x,t)\in\D{R}^2\,\Bigm\vert\,t\geq T,\ 0\leq\xi-\xi_0<Mt^{-\abs{\Im\tilde\nu_0}-1-2\delta}\right\rbrace. 
\end{equation}

\begin{remarks*}
The region $\D{S}$ is specified, somewhat implicitly (involving $\alpha(\xi)$ and $\beta(\xi)$), by \eqref{eq:wedge}. The more explicit description \eqref{eq:wedgebis} is based on the assumption that $\alpha(\xi)-\beta(\xi)\sim(\xi-\xi_0)^{1/2}$ as $\xi\downarrow\xi_0$. 

The region $\D{S}$ depends on $4$ parameters: $|\Im\tilde\nu_0|$, $\delta$, $M$, and $T$. The first one, $\tilde\nu_0$, reflects the influence (via the reflection coefficient) of the initial data on the Cauchy problem while the others are at our choice, $\delta$ being the most important one. A smaller choice of $\delta>0$ increases the size of the asymptotic sector $\D{S}$, but it also makes the asymptotic formula less precise.
\end{remarks*}

\subsection{Organization of the paper}

The main theorem is stated in Section~\ref{sec:main}. Our proof of this theorem is based on a steepest descent analysis of a RH problem which is described in Section~\ref{sec:prelim}. In Section~\ref{sec:prelim}, we also introduce some necessary notation. In Section~\ref{sec:transforms}, we implement a number of transformations of the RH problem that are required for the steepest descent analysis. In Section~\ref{sec:model}, we construct a global parametrix by solving a model RH problem in terms of theta functions associated to a genus $1$ Riemann surface. The global parametrix eventually gives rise to the leading term of $\ord(1)$ in the final asymptotic formula for $q(x,t)$. In Section~\ref{sec:locmod}, we construct local parametrices; in particular, we construct a local parametrix in a neighborhood of the two merging branch points $\alpha$ and $\beta$. This is achieved by relating the original RH problem to an exactly solvable RH problem whose solution is presented in the appendix. The local parametrices eventually give rise to the subleading terms in the final asymptotic formula for $q(x,t)$. The proof of the main theorem is finalized in Section~\ref{sec:final}.
\section{Main result}  \label{sec:main}

In the shock case $B_1<B_2$ we can assume $B_1=-1$, $B_2=1$, and $\phi_2=0$ (see \cite{BLS21}*{Section~2.2}). Suppose $q\colon\D{R}\times\lbrack 0,\infty)\to\D{C}$ is a smooth solution of \eqref{eq:nls} whose initial data $q_0(x)=q(x,0)$ satisfy
\begin{equation}  \label{eq:ic}
q_0(x)=
\begin{cases}
A_1\eul^{\ii\phi}\eul^{2\ii x},&x<-C,\\
A_2\eul^{-2\ii x},&x>C,
\end{cases}
\end{equation}
for some constants $C>0$, $A_1>0$, $A_2>0$, and $\phi\in\D{R}$. Let $E_1 =-1+\ii A_1$ and $E_2=1+\ii A_2$. Under conditions \eqref{eq:ic}, the  spectral functions $a(k)$ and $b(k)$ are entire functions, which, as we mentioned above, allows us to work directly with these functions thus avoiding analytical approximations, which would make the realization of the main ideas of the asymptotic analysis less transparent.

The asymptotics of $q$ in $\D{S}$ can be expressed in terms of quantities defined on the genus $1$ Riemann surface $\tilde M$ with branch cuts along $\Sigma_1$ and $\Sigma_2$, where $\Sigma_j\coloneqq\croch{\bar E_j,E_j}$, $j=1,2$. 

\begin{theorem}  \label{thm:main}
The asymptotics in the sector $\D{S}$ is given by
\begin{equation}  \label{eq:main}
q(x,t)=Q_0(\xi,t)+\frac{Q_1(\xi,t)}{\sqrt{t}}+\ord\left(F(\xi,t)^2+t\abs{\alpha-\beta}^2\right),\quad t\to+\infty,\ (x,t)\in\D{S}.
\end{equation}
\begin{enumerate}[\textbullet]
\item
The first term $Q_0$ is the leading order term. It is given by 
\[
Q_0(\xi,t)=\eul^{2\ii(tg^{(0)}(\xi)+\tilde h(\xi,\infty))}(A_1+A_2)\frac{\tilde\Theta(\tilde\varphi(\infty^+)+\tilde d)\tilde\Theta(\tilde\varphi(\infty^+)-\tilde v(\xi,t)-\tilde d)}{\tilde\Theta(\tilde\varphi(\infty^+)+\tilde v(\xi,t)+\tilde d)\tilde\Theta(\tilde\varphi(\infty^+)-\tilde d)}\,.
\]
\item
The coefficient $Q_1$ of the second term is given by
\[
Q_1(\xi,t)=2\ii\eul^{2\ii(tg^{(0)}(\xi)+\tilde h(\xi,\infty))}\left((T_{\mu}(x,t))_{12}+(T_{\beta}(x,t))_{12}-\overline{(T_{\beta}(x,t))_{21}}\right).
\]
\item
Regarding the last term the estimate is uniform with respect to $x$ and the function $F$ is given by
\begin{equation}  \label{eq:F}
F(\xi,t)\coloneqq t^{\abs{\Im\tilde\nu}}(\ln t)^{\abs{\Im\tilde\nu}+2}\left(t\abs{\alpha-\beta}^2\abs{\ln\abs{\alpha-\beta}}+t^{-\frac{1}{2}}\right).
\end{equation}
\end{enumerate}
$\tilde{\Theta}$ is the Riemann theta function associated with the genus $1$ Riemann surface $\tilde M$, the Abel map $\tilde\varphi$ is defined in \eqref{eq:tildephi}. The functions $\tilde{v}(t)\equiv\tilde{v}(\xi,t)$ and $\tilde h(k)\equiv\tilde h(\xi,k)$ are defined in \eqref{eq:vtilde} and \eqref{eq:htilde}, respectively. The constants $\tilde{\nu}\equiv\tilde{\nu}(\xi)$, $\tilde d\equiv\tilde d(\xi)$, and $g^{(0)}\equiv g^{(0)}(\xi)$ are defined in \eqref{eq:nutilde}, \eqref{eq:dtilde}, and \cite{BLS22}*{(3.28)}, respectively. The functions $T_{\mu}(x,t)$ and $T_{\beta}(x,t)$ are defined in \eqref{eq:Tmu} and \eqref{eq:Tbeta}, respectively.
\end{theorem}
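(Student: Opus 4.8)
The plan is to run a Deift--Zhou steepest-descent analysis of the RH problem associated with $q$ (set up in Section~\ref{sec:prelim}), recovering $q(x,t)$ from the leading coefficient of the large-$k$ expansion of its solution, but using a $g$-function built from the \emph{genus $1$} surface $\tilde M$ rather than the genus $3$ surface $M(\xi)$. The sector $\D{S}$ is exactly the range of $(x,t)$ in which this reduction is legitimate: it is the regime in which the short arc joining $\alpha$ to $\beta$ (and its conjugate joining $\bar\alpha$ to $\bar\beta$) is so short that replacing the near-pinched genus $3$ configuration by the genus $1$ one perturbs the phases by only $\ord(t\abs{\alpha-\beta}^2)$, the separate error term in \eqref{eq:main}. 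Because $a$ and $b$ have no zeros in $\Im k\geq 0$, there are no residue conditions to carry through the analysis.

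In Section~\ref{sec:transforms} I would perform the standard chain of transformations: conjugate by diagonal exponentials built from $g^{(0)}$ and the genus $1$ $g$-function (this produces the scalar prefactor $\eul^{2\ii(tg^{(0)}+\tilde h(\infty))}$ that appears in $Q_0$ and $Q_1$); carry out the triangular factorizations of the jump on the cuts $\Sigma_1$, $\Sigma_2$, on the bands joining them, and on the gap contour $\gamma_{(\mu,\beta)}$; and open lenses so that all jumps become exponentially close to $I$ except on $\Sigma_1\cup\Sigma_2$, on the bands, on $\gamma_{(\mu,\beta)}$, and on the short arc through $\alpha$ and $\beta$. Then in Section~\ref{sec:model} I would solve the resulting model RH problem on $\tilde M$ explicitly in terms of the theta function $\tilde\Theta$ and the Abel map $\tilde\varphi$; the $(1,2)$-entry of the leading nontrivial coefficient of this global parametrix, times the scalar prefactor, is $Q_0(\xi,t)$.

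The heart of the proof is Section~\ref{sec:locmod}. Around each fixed branch point $E_1,\bar E_1,E_2,\bar E_2$ one inserts a standard Airy parametrix in a fixed disk; around the stationary point $\mu$ of the phase one inserts a parametrix built from parabolic cylinder functions in a fixed disk, whose local expansion gives $T_\mu$; and around the merging branch points one must construct a genuinely new parametrix on a fixed disk $D\subset\D{C}^+$ containing the limit point $\alpha(\xi_0)=\beta(\xi_0)$ — inside which $\alpha$ and $\beta$ are separated by a distance tending to $0$ — together with its Schwarz reflection around $\bar\alpha,\bar\beta$. This is done by a conformal change of variable mapping the local jumps onto those of the exactly solvable model of the appendix, which is a variant of the RH problem for Painlev\'e~IV whose parameters include the exponent $\tilde\nu$ of \eqref{eq:nutilde} and a rescaled version of $\alpha-\beta$; its local expansion (and that of its conjugate) gives $T_\beta$. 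The Assumption $\abs{\Im\tilde\nu_0}<\frac{1}{2}$ is needed for the model's solution to be bounded, and the defining inequality of $\D{S}$ is what keeps the model's branch-point parameter small enough that matching it to the global parametrix on $\partial D$ survives the amplification by $t^{\abs{\Im\tilde\nu}}$; this is where the factors $t^{\abs{\Im\tilde\nu}}$, $(\ln t)^{\abs{\Im\tilde\nu}+2}$ and $t\abs{\alpha-\beta}^2\abs{\ln\abs{\alpha-\beta}}$ in $F$ enter.

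Finally (Section~\ref{sec:final}) I would set $R\coloneqq$ (the fully transformed unknown) times the inverse of the combined global/local parametrix. Then $R$ solves a small-norm RH problem whose jump is exponentially close to $I$ on the lens boundaries, equal to $I+\ord(t^{-1/2})$ on the fixed circles around $E_1,\bar E_1,E_2,\bar E_2$ and $\mu$, and $I+\ord(F(\xi,t))$ on $\partial D$; the Neumann series gives $R=I+R_1/k+\ord(k^{-2})$ with $R_1=\ord(F)$. Extracting the explicitly computable part of $R_1$ — which reproduces the correction $Q_1/\sqrt t$, itself of size $\ord(F)$, with the slow $t^{\abs{\Im\tilde\nu}}$-type growth of $Q_1$ carried by $T_\mu$ and $T_\beta$ — and bounding the remainder (quadratic in the jump deviation once the first-order contribution is removed) by $\ord(F^2)$, then feeding this back into the reconstruction formula and adding the $\ord(t\abs{\alpha-\beta}^2)$ genus-reduction error, yields \eqref{eq:main}. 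I expect the main obstacle to be the parametrix at $\alpha,\beta$: one must produce the conformal map that turns the local jumps into those of the Painlev\'e~IV--type model \emph{uniformly} as $\alpha-\beta\to 0$, control that model's solution and its large-argument expansion as its branch-point parameter degenerates (the source of the $t^{\abs{\Im\tilde\nu}}$, $(\ln t)^{\abs{\Im\tilde\nu}+2}$ and $t\abs{\alpha-\beta}^2\abs{\ln\abs{\alpha-\beta}}$ factors in $F$, and the reason for both the restriction $\abs{\Im\tilde\nu_0}<\frac{1}{2}$ and the particular width of $\D{S}$), and verify the $\partial D$-matching in the operator norm demanded by the small-norm theory.
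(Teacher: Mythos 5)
Your outline matches the paper's architecture in its essentials: a chain of conjugations and lens openings, a genus-$1$ theta-function global parametrix giving $Q_0$, a parabolic-cylinder parametrix at $\mu$ giving $T_\mu$, a Painlev\'e~IV-type parametrix at the merging pair $\alpha,\beta$ (plus its Schwarz reflection) giving $T_\beta$, and a small-norm (Neumann series) close producing the $F^2$ and $t\abs{\alpha-\beta}^2$ errors. Two points, however, diverge from what the paper actually does and one of them is where your sketch would get stuck. First, the paper does \emph{not} build a new $g$-function on $\tilde M$: it keeps the genus-$3$ $g$-function of \cite{BLS22} with the true critical points $\alpha(\xi),\beta(\xi)$, and the genus-$1$ model emerges only because the phase jump $g_+-g_-$ across the very short band $\gamma_{(\bar\beta,\beta)}$ is a constant of size $\ord(\abs{\alpha-\beta}^2)$, so the residual jump there is $I+\ord(t\abs{\alpha-\beta}^2)$ — that is the concrete source of the $t\abs{\alpha-\beta}^2$ term, not a change of $g$-function; relatedly, no Airy parametrices at $E_1,\bar E_1,E_2,\bar E_2$ appear in this paper (the only local models are at $\mu,\beta,\bar\beta$). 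Second — and this is the gap — $g_\beta(k)=\int_\beta^k\dd g$ is \emph{not} analytic on $D_\epsilon(\beta)$: it jumps across $\gamma_{(\beta,\alpha)}$, so you cannot directly take it to define the conformal change of variable into the model problem as you propose. The paper overcomes this with Lemma~\ref{lem:bapprox}, Taylor-expanding $\sqrt{s-\alpha}$ about $s=\beta$ so that $g_\beta$ is approximated by the genuinely analytic $\int_\beta^k g_1(\xi,s)\bigl(s-\beta+\tfrac{\beta-\alpha}{2}\bigr)\dd s$ with error $\ord(\abs{\alpha-\beta}^2\abs{\ln\abs{\alpha-\beta}})$, and it is this approximant, not $g_\beta$ itself, that produces \eqref{eq:lambda}--\eqref{eq:y} and hence the Painlev\'e~IV parameter $y\sim\sqrt{t}(\beta-\alpha)$. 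Without this step (and the companion transformation by $\tilde\delta$ that removes the $aa^*$ factor on the band and makes the local exponent equal to $\tilde\nu(\xi)$), the matching of your local parametrix to the model on $\partial D_R(\beta)$ does not go through.
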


To illustrate the statement of Theorem~\ref{thm:main} we make some comments. It will be useful to note that the functions $T_{\mu}(x,t)$ and $T_{\beta}(x,t)$ satisfy the uniform estimates 
\begin{equation} \label{eq:TmuTbeta}
\begin{array}{l}
\abs{T_{\mu}(x,t)}\leq C,\\[1mm]
\abs{T_{\beta}(x,t)}\leq Ct^{\abs{\Im\tilde\nu_0}},
\end{array}\ (x,t)\in\D{S}.
\end{equation}
All factors involved in \eqref{eq:Tmu} and \eqref{eq:Tbeta} are indeed bounded, with the exception of the factors $t^{\pm\frac{\ii\tilde\nu}{2}\sigma_3}$. Moreover, if $\abs{\beta(\xi)-\beta(\xi_0)}\sim\abs{\xi-\xi_0}^{1/2}$ as $\xi\downarrow\xi_0$ and \eqref{eq:wedgebis} are correct, then $\abs{\Im\tilde\nu}=\abs{\Im\tilde\nu_0}+\ord(t^{-\frac{1+\abs{\Im\tilde\nu_0}}{2}-\delta})$ which implies $t^{\abs{\Im\tilde\nu}}=\ord(t^{\abs{\Im\tilde\nu_0}})$.

\begin{comments}  \label{comms:asymptotics}
We compare the orders of the three terms in \eqref{eq:main}.
\begin{enumerate}[\textbullet]
\item
The first term $Q_0(\xi,t)$ is a bounded and oscillating term obtained
by solving the ``model'' RH problem (see Section~\ref{sec:model}). It dominates the other two terms (for any $\delta>0$) which decay because we assumed $\abs{\Im\tilde\nu_0}<1/2$.
\item
An important feature is that $\tilde\nu_0$ is complex-valued. This implies that the coefficient $Q_1$ can grow with $t$ since by \eqref{eq:TmuTbeta} it is of order $\ord(t^{\abs{\Im\tilde\nu_0}})$, while the second term itself $Q_1/\sqrt{t}$ is of order $\ord(t^{\abs{\Im\tilde\nu_0}-1/2})$ and decays.
\item
If $\delta>\delta_0\coloneqq\max\accol{\frac{1}{4}-\abs{\Im\tilde\nu_0},\frac{1}{8}-\frac{1}{4}\abs{\Im\tilde\nu_0}}$ and $c_0\coloneqq\frac{1}{2}-|\Im\tilde\nu_0|$, then the last term in \eqref{eq:main} is $\ord(t^{-c})$ for any $c<c_0$. Since the second term in \eqref{eq:main} can be written
\[
\frac{Q_1}{\sqrt{t}}=G(\xi,t)t^{-c_0},
\]
where $G(\xi,t)$ is a bounded, non-decaying, oscillating function, then in this case $Q_1/\sqrt{t}$ can be viewed as the subleading term. Moreover, for $\delta>\delta_1$ with $\delta_1\coloneqq\max\accol{\frac{1}{4}-\frac{1}{2}\abs{\Im\tilde\nu_0},\,\frac{1}{2}-\frac{3}{2}\abs{\Im\tilde\nu_0}}$, e.g., for $\delta>1/2$, the last term is
\[
\ord\bigl(t^{2\abs{\Im\tilde\nu_0}-1}(\ln t)^{2\abs{\Im\tilde\nu_0}+4}\bigr).
\]
\end{enumerate}
\end{comments}

On the line $\xi=\xi_0$, we can take $\delta$ arbitrarily large and Theorem~\ref{thm:main} reduces to the following. 

\begin{corollary} \label{cor:xi0}
The asymptotics on the line $\xi=\xi_0$ is given by
\begin{equation}   \label{eq:xi0}
q(x,t)=Q_0(\xi_0,t)+\frac{Q_1(\xi_0,t)}{\sqrt{t}}+\ord\left(t^{2\abs{\Im\tilde\nu_0}-1}(\ln t)^{2\abs{\Im\tilde\nu_0}+4}\right),\quad t\to+\infty,\ x=\xi_0t.
\end{equation}
\end{corollary}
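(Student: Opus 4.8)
The plan is to deduce Corollary~\ref{cor:xi0} as a direct specialization of Theorem~\ref{thm:main} to the line $\xi=\xi_0$. First I would observe that on the line $x=\xi_0 t$ we have $\xi\equiv\xi_0$ identically, hence $\alpha(\xi)=\alpha(\xi_0)=\beta(\xi_0)=\beta(\xi)$, so $\abs{\alpha-\beta}=0$. Consequently the membership condition $\abs{\alpha-\beta}<Mt^{-\frac{1+\abs{\Im\tilde\nu_0}}{2}-\delta}$ in the definition \eqref{eq:wedge} of $\D{S}$ is satisfied \emph{for every} $\delta>0$ and every $M>0$ once $t\geq T$. Thus the entire ray $\{(x,t)\colon x=\xi_0 t,\ t\geq T\}$ lies in $\D{S}$ for arbitrarily large $\delta$, and Theorem~\ref{thm:main} applies with $\delta$ unconstrained from above.

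Next I would simplify the error term in \eqref{eq:main}. Since $\abs{\alpha-\beta}=0$, the contribution $t\abs{\alpha-\beta}^2$ vanishes and the remaining error is $\ord(F(\xi_0,t)^2)$. Evaluating \eqref{eq:F} at $\xi=\xi_0$ with $\alpha=\beta$ kills the term $t\abs{\alpha-\beta}^2\abs{\ln\abs{\alpha-\beta}}$ (it tends to $0$), leaving $F(\xi_0,t)=t^{\abs{\Im\tilde\nu_0}}(\ln t)^{\abs{\Im\tilde\nu_0}+2}\cdot t^{-1/2}$, where I have used $\tilde\nu(\xi_0)=\tilde\nu_0$. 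Squaring gives $F(\xi_0,t)^2=t^{2\abs{\Im\tilde\nu_0}-1}(\ln t)^{2\abs{\Im\tilde\nu_0}+4}$, which is exactly the error term displayed in \eqref{eq:xi0}. One subtle point to check is that the term $t\abs{\alpha-\beta}^2\abs{\ln\abs{\alpha-\beta}}$ is genuinely harmless as $\abs{\alpha-\beta}\to 0$: since $s^2\abs{\ln s}\to 0$ as $s\to 0^+$, the expression in parentheses in \eqref{eq:F} reduces to $t^{-1/2}$ in the limit, and this is the value that should be substituted. Strictly on the line $\xi=\xi_0$ one has $\abs{\alpha-\beta}\equiv 0$, so this term is simply absent and no limiting argument is even needed.

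Finally I would note that the leading terms $Q_0(\xi_0,t)$ and $Q_1(\xi_0,t)/\sqrt{t}$ in \eqref{eq:xi0} are obtained by evaluating the corresponding quantities from Theorem~\ref{thm:main} at $\xi=\xi_0$; all the auxiliary objects ($g^{(0)}$, $\tilde h$, $\tilde\Theta$, $\tilde\varphi$, $\tilde v$, $\tilde d$, $T_\mu$, $T_\beta$) are well defined at $\xi_0$ because the genus $1$ surface $\tilde M$ is nondegenerate there (its branch cuts $\Sigma_1$, $\Sigma_2$ do not depend on $\xi$, and the branch points $\alpha(\xi_0)=\beta(\xi_0)\in\D{C}^+$ are the merging points of the \emph{local} parametrix, not branch points of $\tilde M$). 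Hence no further analysis is required: Corollary~\ref{cor:xi0} is an immediate consequence of Theorem~\ref{thm:main}. There is no real obstacle here; the only thing to be careful about is the bookkeeping of the error term \eqref{eq:F} under the substitution $\alpha=\beta$, $\tilde\nu=\tilde\nu_0$, which is routine.
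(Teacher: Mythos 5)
Your argument is correct and is essentially the paper's own reasoning (the authors simply note that on $\xi=\xi_0$ one may take $\delta$ arbitrarily large, and they record the resulting error in Section~\ref{sec:final}). In particular, the key observations you make --- that $\abs{\alpha-\beta}=0$ on the ray $x=\xi_0t$ so both the term $t\abs{\alpha-\beta}^2$ and the first summand inside the parentheses in \eqref{eq:F} drop out, leaving $F(\xi_0,t)^2=t^{2\abs{\Im\tilde\nu_0}-1}(\ln t)^{2\abs{\Im\tilde\nu_0}+4}$ --- are exactly the bookkeeping the paper intends.
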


On the other hand, keeping only the leading order term, Theorem~\ref{thm:main} reduces to the following.

\begin{corollary}[Leading order asymptotics]  \label{cor:lead}
There exists a $c>0$ such that
\[
q(x,t)=Q_0(\xi,t)+\ord(t^{-c}),\quad t\to+\infty,\ (x,t)\in\D{S},
\]
uniformly with respect to $x$.
\end{corollary}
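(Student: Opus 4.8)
The plan is to derive Corollary~\ref{cor:lead} directly from Theorem~\ref{thm:main} by showing that the two correction terms in \eqref{eq:main}, namely $Q_1(\xi,t)/\sqrt t$ and the remainder $\ord(F(\xi,t)^2+t\abs{\alpha-\beta}^2)$, are both $\ord(t^{-c})$ for a single, $\xi$-independent $c>0$, uniformly over $\D{S}$. The key input is the standing assumption $\abs{\Im\tilde\nu_0}<\tfrac12$, together with the uniform estimates \eqref{eq:TmuTbeta} and the observation recorded just after Theorem~\ref{thm:main} that $t^{\abs{\Im\tilde\nu}}=\ord(t^{\abs{\Im\tilde\nu_0}})$ throughout $\D{S}$.

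First I would handle the $Q_1$ term. By the formula for $Q_1$ and the bounds \eqref{eq:TmuTbeta}, we have $\abs{Q_1(\xi,t)}\leq C t^{\abs{\Im\tilde\nu_0}}$ uniformly on $\D{S}$, since the prefactor $\eul^{2\ii(tg^{(0)}+\tilde h(\infty))}$ is bounded in modulus (its exponent is purely imaginary up to bounded real contributions; this is exactly the content of the "bounded and oscillating" remark in Comments~\ref{comms:asymptotics}). Hence $\abs{Q_1/\sqrt t}\leq C t^{\abs{\Im\tilde\nu_0}-1/2}$, and because $\abs{\Im\tilde\nu_0}<\tfrac12$ the exponent $\abs{\Im\tilde\nu_0}-\tfrac12$ is strictly negative; set $c_0\coloneqq\tfrac12-\abs{\Im\tilde\nu_0}>0$ as in Comments~\ref{comms:asymptotics}, so this term is $\ord(t^{-c_0})$.

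Next I would bound the remainder. On $\D{S}$ we have the defining constraint $\abs{\alpha-\beta}<M t^{-\frac{1+\abs{\Im\tilde\nu_0}}{2}-\delta}$, so $t\abs{\alpha-\beta}^2 = \ord\bigl(t^{1-(1+\abs{\Im\tilde\nu_0})-2\delta}\bigr)=\ord\bigl(t^{-\abs{\Im\tilde\nu_0}-2\delta}\bigr)$, which is $\ord(t^{-2\delta})$ and in particular $\ord(t^{-c})$ for any $c\le 2\delta$. For $F(\xi,t)^2$, using $F=t^{\abs{\Im\tilde\nu}}(\ln t)^{\abs{\Im\tilde\nu}+2}\bigl(t\abs{\alpha-\beta}^2\abs{\ln\abs{\alpha-\beta}}+t^{-1/2}\bigr)$ together with $t^{\abs{\Im\tilde\nu}}=\ord(t^{\abs{\Im\tilde\nu_0}})$ and the same constraint on $\abs{\alpha-\beta}$, one checks that each factor inside the parenthesis is $\ord(t^{-1/2+\epsilon})$ for every $\epsilon>0$ (the term $t\abs{\alpha-\beta}^2\abs{\ln\abs{\alpha-\beta}}$ is $\ord(t^{-\abs{\Im\tilde\nu_0}-2\delta}\ln t)$, which is $\ord(t^{-1/2})$ once $\delta\ge\tfrac14-\tfrac12\abs{\Im\tilde\nu_0}$, and anyway is $\ord(t^{-\epsilon})$ for any $\epsilon<2\delta$). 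Thus $F(\xi,t)=\ord\bigl(t^{\abs{\Im\tilde\nu_0}-1/2}(\ln t)^{\abs{\Im\tilde\nu_0}+2}\bigr)$, so $F^2=\ord\bigl(t^{2\abs{\Im\tilde\nu_0}-1}(\ln t)^{2\abs{\Im\tilde\nu_0}+4}\bigr)$; since $2\abs{\Im\tilde\nu_0}-1<0$, this is $\ord(t^{-c})$ for any $c<1-2\abs{\Im\tilde\nu_0}$ after absorbing the logarithm. Taking $c\coloneqq\tfrac12\min\{c_0,\,1-2\abs{\Im\tilde\nu_0},\,2\delta\}>0$ (any strictly positive value below all three exponents works, absorbing the log powers into the slack) makes all three pieces $\ord(t^{-c})$, which proves the corollary; the uniformity in $x$ is inherited from the uniformity statements in Theorem~\ref{thm:main} and in \eqref{eq:TmuTbeta}. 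There is essentially no obstacle here — the corollary is purely a bookkeeping consequence of Theorem~\ref{thm:main}, the assumption $\abs{\Im\tilde\nu_0}<\tfrac12$, and the geometric smallness of $\abs{\alpha-\beta}$ on $\D{S}$; the only mild care needed is to confirm that the oscillatory prefactor $\eul^{2\ii(tg^{(0)}+\tilde h(\infty))}$ is genuinely bounded, i.e.\ that $g^{(0)}(\xi)$ is real and $\Im\tilde h(\xi,\infty)$ stays bounded on $\D{S}$, which follows from their definitions in \cite{BLS22}*{(3.28)} and \eqref{eq:htilde}.
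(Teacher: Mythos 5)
Your proof is correct and matches the paper's implicit approach: Corollary~\ref{cor:lead} is stated without a separate proof precisely because it is a direct bookkeeping consequence of Theorem~\ref{thm:main}, the bounds~\eqref{eq:TmuTbeta}, and the assumption $\abs{\Im\tilde\nu_0}<\tfrac12$, which is exactly what you invoke. One minor slip to flag: the displayed intermediate estimate $F=\ord\bigl(t^{\abs{\Im\tilde\nu_0}-1/2}(\ln t)^{\abs{\Im\tilde\nu_0}+2}\bigr)$ is only valid when $\delta$ is large enough that $t^{-1/2}$ dominates $t\abs{\alpha-\beta}^2\abs{\ln\abs{\alpha-\beta}}$ inside $F$; for small $\delta>0$ the decay of $F$ is instead governed by $t^{-2\delta}$ up to logarithms (as your own parenthetical concedes), but since your final choice $c=\tfrac12\min\{c_0,\,1-2\abs{\Im\tilde\nu_0},\,2\delta\}$ already caps $c$ by $\delta$, the conclusion is unaffected.
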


\section{Preliminaries} \label{sec:prelim}

\subsection{Notations}

Let $E_j\coloneqq B_j+\ii A_j$ in the complex $k$-plane $\D{C}$. We denote by $\Sigma_j$, $j=1,2$ the vertical segment $\croch{\bar E_j,E_j}$ oriented upward. See Figure~\ref{fig:basic-contour}. 

Let $\D{C}^\pm=\accol{\pm\Im k>0}$ denote the open upper and lower halves of the complex plane. The Riemann sphere will be denoted by $\hat{\D{C}}=\D{C}\cup\accol{\infty}$. We write $\ln k$ for the logarithm with the principal branch, that is, $\ln k=\ln\abs{k}+\ii\arg k$ where $\arg k\in(-\pi,\pi\rbrack$. Unless specified otherwise, all complex powers will be defined using the principal branch, i.e., $z^{\alpha}=\eul^{\alpha\ln z}$. We let $f^*(k)\coloneqq\overline{f(\bar k)}$ denote the Schwarz conjugate of a complex-valued function $f(k)$.

Given an open subset $D\subset\hat{\D{C}}$ bounded by a piecewise smooth contour $\Sigma$, we let $\dot E_2(D)$ denote the Smirnoff class consisting of all functions $f(k)$ analytic in $D$ with the property that for each connected component $D_j$ of $D$ there exist curves $\accol{C_n}_1^{\infty}$ in $D_j$ such that the $C_n$ eventually surround each compact subset of $D_j$ and $\sup_{n\geq 1}\norm{f}_{L^2(C_n)}<\infty$. We let $E^{\infty}(D)$ denote the space of bounded analytic functions $D\to\D{C}$. RH problems in the paper are generally $2\times 2$ matrix-valued. They are formulated in the $L^2$-sense using Smirnoff classes (see \cites{Le17,Le18}): 
\begin{equation}   \label{rhp}
\begin{cases}
m\in I+\dot E^2(\D{C}\setminus\Sigma),&\\
m_+(k)=m_-(k)J(k)&\text{for a.e. }k\in\Sigma,
\end{cases}
\end{equation}
where $m_+$ and $m_-$ denote the boundary values of $m$ from the left and right sides of the contour $\Sigma$. We let $\sigma_1\coloneqq\left(\begin{smallmatrix}0&1\\1&0\end{smallmatrix}\right)$ and $\sigma_3\coloneqq\left(\begin{smallmatrix}1&0\\0&-1\end{smallmatrix}\right)$ denote the first and third Pauli matrices.
\subsection{Set-up}

We have \cite{BLS22}*{Proposition 3.1}
\begin{equation}
q(x,t)=2\ii\lim_{k\to\infty}k\left(\hat m(x,t,k)\right)_{12},\quad x\in\D{R},\ t\in\lbrack 0,\infty),
\end{equation}
where $\hat m(x,t,\,\cdot\,)$ is the unique solution of the basic RH problem (see \cite{BLS22}*{(3.12)})
\begin{equation}   \label{eq:rhp0}
\begin{cases}
\hat m(x,t,\,\cdot\,)\in I+\dot E^2\left(\D{C}\setminus(\D{R}\cup\Sigma_1\cup\Sigma_2)\right),&\\
\hat m_+(x,t,k)=\hat m_-(x,t,k)\hat J(x,t,k)&\text{for a.e. }k\in\D{R}\cup\Sigma_1\cup\Sigma_2,
\end{cases}
\end{equation}
with $\hat J$ defined as in \cite{BLS22}*{(3.11)}:
\begin{subequations}  \label{basic-jump}
\begin{equation}  \label{jump}
\hat J(x,t,k)\coloneqq\eul^{-\ii t\theta(k)\sigma_3}\hat J_0(k)\eul^{\ii t\theta(k)\sigma_3},\quad k\in\Sigma\coloneqq\D{R}\cup\Sigma_1\cup\Sigma_2,
\end{equation}
where the phase $\theta(k)$ is
\begin{equation}   \label{phase}
\theta(k)\equiv\theta(\xi,k)\coloneqq 2k^2+\xi k,\qquad\xi\coloneqq\frac{x}{t}\,,
\end{equation}
and 
\begin{equation}  \label{jump0}
\hat J_0(k)=\begin{cases}
\begin{pmatrix}1&\hat r^*\\
0&1\end{pmatrix}\begin{pmatrix}1&0\\
\hat r&1\end{pmatrix}=\begin{pmatrix}1+\hat r\hat r^*&\hat r^*\\
\hat r&1\end{pmatrix},&k\in\D{R},\\
\begin{pmatrix}-\ii&0\\\frac{\ii\eul^{-\ii\phi}}{\hat a_+\hat a_-}&\ii\end{pmatrix},&k\in\Sigma_1\cap\D{C}^+,\\
\begin{pmatrix}\frac{\hat a_-}{\hat a_+}&\ii\nu_1^2\\0&\frac{\hat a_+}{\hat a_-}\end{pmatrix},&k\in\Sigma_2\cap\D{C}^+,\\
\begin{pmatrix}-\ii&\frac{\ii\eul^{\ii\phi}}{\hat a_+^*\hat a_-^*}\\0&\ii\end{pmatrix},&k\in\Sigma_1\cap\D{C}^-,\\
\begin{pmatrix}\frac{\hat a_+^*}{\hat a_-^*}&0\\\ii\nu_1^{-2}&-\frac{\hat a_-^*}{\hat a_+^*}\end{pmatrix},&k\in\Sigma_2\cap\D{C}^-.
\end{cases}
\end{equation}
\end{subequations}
Recall \cite{BLS22}*{(3.10)} that $\hat a$, $\hat b$, and $\hat r$ are slight modifications of the scattering data $a$, $b$, and $r$:
\begin{equation}  \label{hat-abr}
\hat a\coloneqq a\nu_1,\quad\hat b\coloneqq b\nu_1,\quad\hat r\coloneqq\frac{\hat b^*}{\hat a}=r\nu_1^{-2},
\end{equation}
where $\nu_1(k)\coloneqq\bigl(\frac{k-E_1}{k-\bar E_1}\bigr)^{\frac{1}{4}}$ with $\nu_1(\infty)=1$, as in \cite{BLS22}*{(3.7)}. Note that $\hat a\hat a^*=aa^*$, $\hat b\hat b^*=bb^*$, and $\hat r\hat r^*=rr^*$.

\begin{figure}[ht]
\centering\includegraphics[scale=.8]{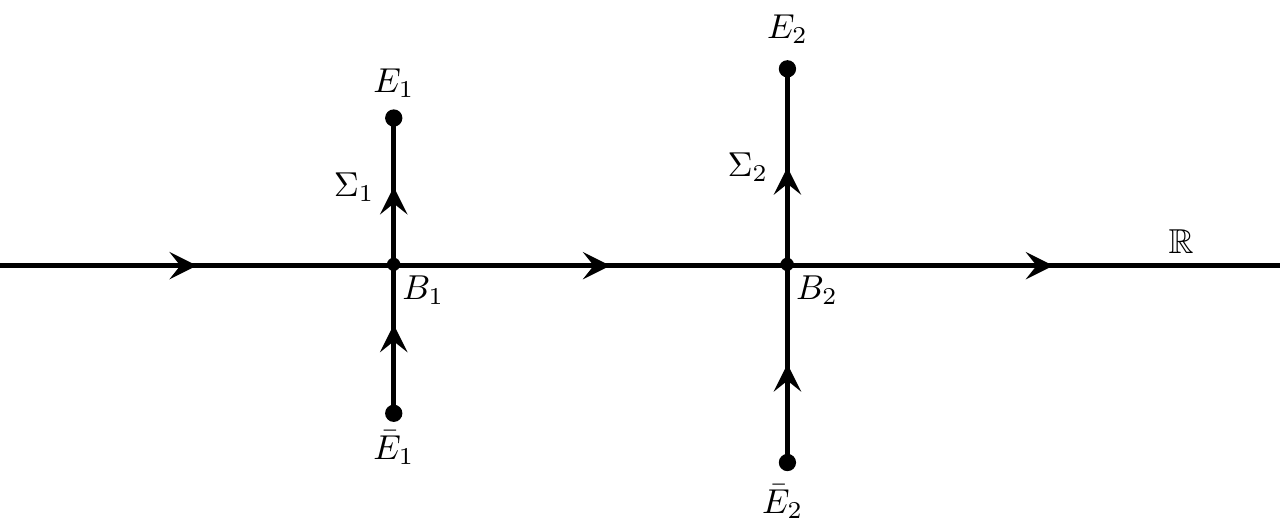}
\caption{The contour $\Sigma=\D{R}\cup\Sigma_1\cup\Sigma_2$ for the basic RH problem.} 
\label{fig:basic-contour}
\end{figure}

Let $\tilde M$ denote the genus $1$ Riemann surface with branch cuts along $\Sigma_1$ and $\Sigma_2$. We think of $\tilde M$ as the limit of the genus $3$ surface $M(\xi)$ (see \eqref{M-def}) as $\xi\to\xi_0$. We introduce a canonical homology basis $\accol{a_1,b_1}$ on $\tilde{M}$ by letting the cycles $a_1$ and $b_1$ be as in \cite{BLS22}*{Figure 4}. We let $\tilde\varphi\colon\tilde M\to\D{C}$ be the limit of the Abel map $\varphi$ (see \cite{BLS22}*{(3.21)}) as $\xi\to\xi_0$, i.e.,
\begin{equation}  \label{eq:tildephi}
\tilde\varphi(P)=\int_{\bar E_2}^P\tilde\zeta,\quad P\in\tilde M,
\end{equation}
where $\accol{\tilde\zeta}$ is the normalized basis of $\C{H}^1(\tilde M)$ dual to the canonical basis $\accol{a_1,b_1}$, i.e., $\tilde\zeta$ is a holomorphic differential form such that
\[
\int_{a_1}\tilde\zeta=1.
\]
Let $\tau$ denote the period $\int_{b_1}\tilde\zeta$ and let $\tilde\Theta$ denote the associated theta function which is defined by
\begin{equation} \label{eq:tildetheta}
\tilde\Theta(z)\coloneqq\sum_{n\in\D{Z}}\eul^{2\pi\ii(\frac{1}{2}n^2\tau+nz)},\quad z\in\D{C}.
\end{equation}
As in \cite{BLS22}*{(3.24)}, we let $g(k)\equiv g(\xi,k)$ denote the $g$-function on $M(\xi)$ with derivative
\[
\frac{\dd g}{\dd k}=\frac{4(k-\mu)(k-\alpha)(k-\bar\alpha)(k-\beta)(k-\bar\beta)}{w(k)},
\]
where $\mu\equiv\mu(\xi)$ is a real number and
\[
w(k)\coloneqq\sqrt{(k-E_1)(k-\bar E_1)(k-E_2)(k-\bar E_2)(k-\alpha)(k-\bar\alpha)(k-\beta)(k-\bar\beta)}.
\]
We define $\tilde w(k)$ by
\begin{equation}  \label{eq:wtilde}
\tilde w(k)\coloneqq\sqrt{(k-E_1)(k-\bar E_1)(k-E_2)(k-\bar E_2)},\quad k\in\D{C}\setminus(\Sigma_1\cup\Sigma_2),
\end{equation}
where the branch of the square root is such that $\tilde w(k)>0$ for $k\gg 0$.

Throughout the paper $C,\,c>0$ will denote generic constants independent of $k$ and of $(x,t)\in\D{S}$, which may change within a computation.

\section{Transformations of the RH problem}  \label{sec:transforms}

In order to determine the long-time asymptotics of the solution $\hat m$ of the RH problem \eqref{eq:rhp0}, we will perform a series of transformations of the RH problem. More precisely, starting with $\hat m$, we define functions $\hat m^{(j)}(x,t,k)$, $j=1,\dots,6$, such that each $\hat m^{(j)}$ satisfies an RH problem which is equivalent to the original RH problem \eqref{eq:rhp0}. The RH problem for $\hat m^{(j)}$ has the form
\begin{equation}  \label{eq:mj}
\begin{cases}
\hat m^{(j)}(x,t,\,\cdot\,)\in I+\dot E^2(\D{C}\setminus\Sigma^{(j)}),&\\
\hat m_+^{(j)}(x,t,k)=\hat m_-^{(j)}(x,t,k)\hat v^{(j)}(x,t,k)&\text{for a.e. }k\in\Sigma^{(j)},
\end{cases}
\end{equation}
where the contours $\Sigma^{(j)}$ and jump matrices $\hat v^{(j)}$ are specified below.

At each stage of the transformations, $\hat m^{(j)}$ and $\hat v^{(j)}$ will satisfy the symmetries
\begin{equation}  \label{eq:vjsym}
\hat v^{(j)}(x,t,k)=\begin{cases}
\sigma_3\sigma_1\overline{\hat v^{(j)}(x,t,\bar k)}\sigma_1\sigma_3,&k\in\Sigma^{(j)}\setminus\D{R},\\
\sigma_3\sigma_1\overline{\hat v^{(j)}(x,t,\bar k)}^{-1}\sigma_1\sigma_3,&k\in\Sigma^{(j)}\cap\D{R},
\end{cases}
\quad j=1,\dots,6,
\end{equation}
and
\begin{equation}   \label{eq:mjsym}
\hat m^{(j)}(x,t,k)=\sigma_3\sigma_1\overline{\hat m^{(j)}(x,t,\bar k)}\sigma_1\sigma_3,\quad k\in\D{C}\setminus\Sigma^{(j)},\quad j=1,\dots,6.
\end{equation}
\subsection{First three transformations}

The first three transformations are the same as in \cite{BLS22}*{Sections 4.1-4.3}. This leads to a function $\hat m^{(3)}$ which satisfies the RH problem \eqref{eq:mj} for $j=3$ with jump contour $\Sigma^{(3)}$ displayed in Figure~\ref{fig:jump-contour-3} 
\begin{figure}[ht]
\centering\includegraphics[scale=.7]{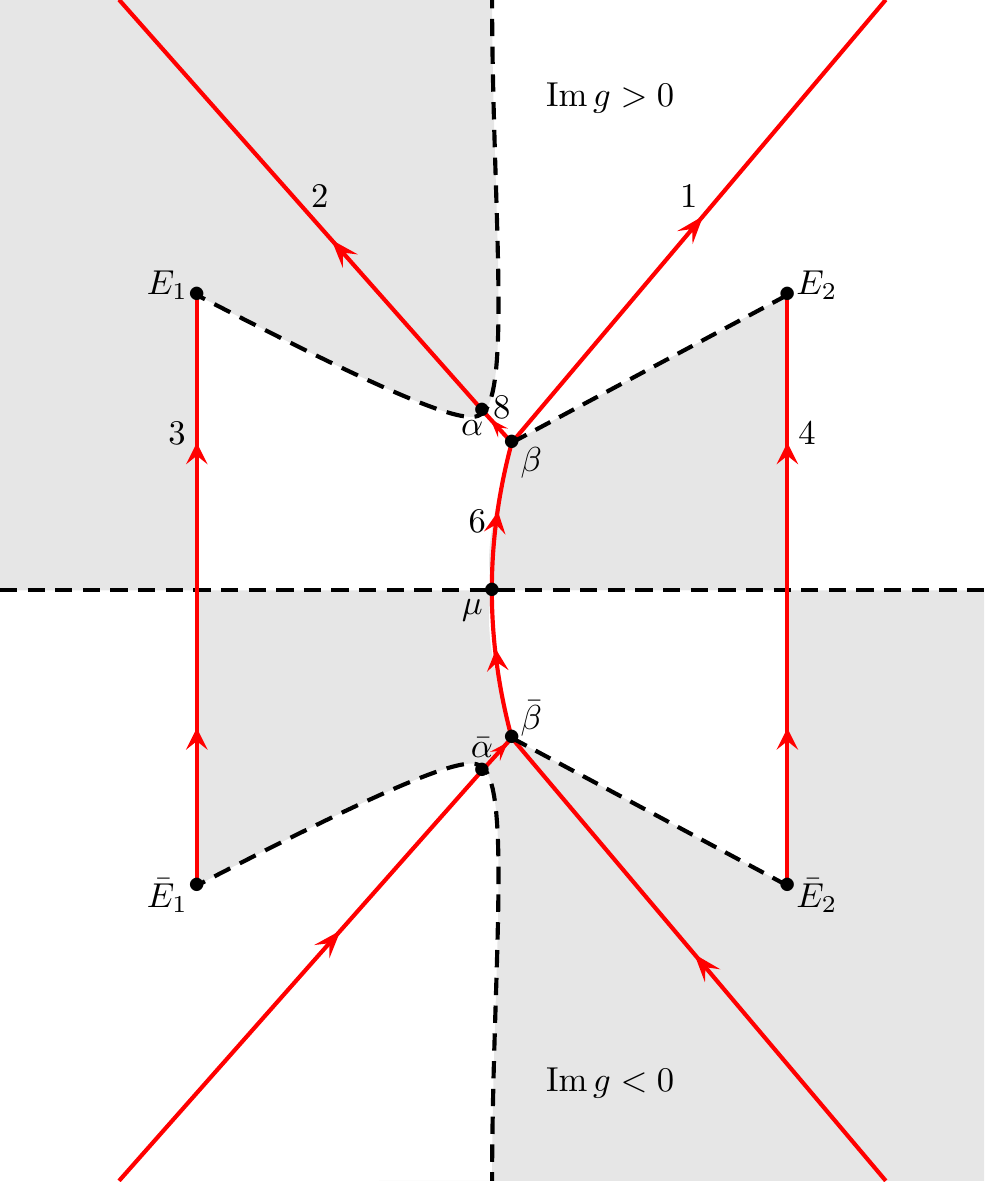}
\caption{The jump contour $\Sigma^{(3)}$ (red with arrows indicating orientation) together with the set where $\Im g=0$ (dashed black). The region where $\Im g<0$ is shaded and the region where $\Im g>0$ is white.} 
\label{fig:jump-contour-3}
\end{figure}
and jump matrix $\hat v^{(3)}$ given by the following formulas \cite{BLS22}*{Section 4.3} ($\hat v_l^{(3)}$ denotes the restriction of $\hat v^{(3)}$ to the contour labeled by $l$ in Figure~\ref{fig:jump-contour-3}):
\begin{alignat*}{2}
&\hat v_1^{(3)}=
\begin{pmatrix}
1&0\\
\hat r\delta^{-2}\eul^{2\ii tg}&1\end{pmatrix},&&
\hat v_2^{(3)}=
\begin{pmatrix}
1&-\hat a\hat b\delta^2\eul^{-2\ii tg}\\
0&1\end{pmatrix},\\
&\hat v_3^{(3)}=
\begin{pmatrix}
0&\ii\hat a_+\hat a_-\eul^{\ii\phi}\delta^2\eul^{-\ii t(g_++g_-)}\\
\frac{\ii\eul^{-\ii\phi}}{\hat a_+\hat a_-}\delta^{-2}\eul^{\ii t(g_++g_-)}&0\end{pmatrix},&&\\
&\hat v_4^{(3)}=
\begin{pmatrix}
0&\ii\nu_1^2\delta^2\eul^{-\ii t(g_++g_-)}\\
\ii\nu_1^{-2}\delta^{-2}\eul^{\ii t(g_++g_-)}&0\end{pmatrix},&&\\
&\hat v_6^{(3)}=
\eul^{-\ii tg_-\sigma_3}\begin{pmatrix}
1&-\hat a\hat b\delta^2\\
\hat r\delta^{-2}&\hat a\hat a^*\end{pmatrix}\eul^{\ii tg_+\sigma_3},&&
\hat v_8^{(3)}=
\eul^{-\ii tg_-\sigma_3}\begin{pmatrix}
1&-\hat a\hat b\delta^2\\
0&1\end{pmatrix}\eul^{\ii tg_+\sigma_3},
\end{alignat*}
and extended to the lower half-plane by means of the symmetry \eqref{eq:vjsym}. Recall \cite{BLS22}*{Section 4.2} that the complex-valued function $\delta(k)\equiv\delta(\xi,k)$ is defined by
\[
\delta(k)\coloneqq\eul^{\frac{1}{2\pi\ii}\int_{-\infty}^{\mu}\frac{\ln(1+\abs{r(s)}^2)}{s-k}\,\dd s},\quad k\in\D{C}\setminus(-\infty,\mu\rbrack,
\]
where $\mu$ is the only real critical point of the $g$-function $g(k)$.

\begin{remark*}
$(\hat v_2^{(3)})_{12}$ has the opposite sign compared to that in \cite{BLS22} because the orientation of the contour $2$ is opposite. The path labeled by $8$ in Figure~\ref{fig:jump-contour-3} is labeled by $5$ in \cite{BLS22}*{Figure 10}.
\end{remark*}

\subsection{Fourth transformation}

The fourth transformation is also similar to the analogous transformation in \cite{BLS22}*{Section 4.4}. However, since $\alpha$ and $\beta$ are now merging, we do not need to factor the jump across $\gamma_{(\beta,\alpha)}$, where $\gamma_{(\beta,\alpha)}$ is the contour from $\beta$ to $\alpha$ in Figure~\ref{fig:jump-contour-3}. As in \cite{BLS22}*{Figure~11}, we let $V_3$ and $V_4$ be open sets that form a lens around $\gamma_{(\mu,\beta)}$. Thus we define $\hat m^{(4)}$ for $k$ in the upper-half plane by
\[
\hat m^{(4)}\coloneqq\hat m^{(3)}\times
\begin{cases}
\begin{pmatrix}
1&0\\-\frac{\hat b^*}{\hat a^2\hat a^*}\delta^{-2}\eul^{2\ii tg}&1\end{pmatrix},&k\in V_3,\\
\begin{pmatrix}
1&-\frac{\hat b}{\hat a^*}\delta^2\eul^{-2\ii tg}\\0&1\end{pmatrix},&k\in V_4,\\
I,&\text{elsewhere in }\D{C}^+,
\end{cases}
\]
and extend the definition to the lower half-plane by means of the symmetry \eqref{eq:mjsym}. We note that the function $\delta(\xi,k)$ obeys the uniform bound (see \cite{BLS22}*{Lemma 4.1})
\[
\abs{\delta(\xi,k)^{\pm1}}\leq C,\quad k\in\D{C}\setminus(-\infty,\mu\rbrack,\ (x,t)\in\D{S}.
\]

Then $\hat m$ satisfies the RH problem \eqref{eq:rhp0} iff $\hat m^{(4)}$ satisfies the RH problem \eqref{eq:mj} for $j=4$, where $\Sigma^{(4)}$ is the jump contour displayed in Figure~\ref{fig:jump-contour-4} and the jump matrix $\hat v^{(4)}$ is given by ($\hat v_l^{(4)}$ denotes the restriction of $\hat v^{(4)}$ to the contour labeled by $l$ in Figure~\ref{fig:jump-contour-4}):
\begin{alignat*}{2}
&\hat v_1^{(4)}=\begin{pmatrix}1&0\\\frac{\hat b^*}{\hat a}\delta^{-2}\eul^{2\ii tg}&1\end{pmatrix},&&\hat v_2^{(4)}=\begin{pmatrix}1&-\hat a\hat b\delta^2\eul^{-2\ii tg}\\0&1\end{pmatrix},\\
&\hat v_3^{(4)}=\begin{pmatrix}0&\ii\hat a_+\hat a_-\eul^{\ii\phi}\delta^2\eul^{-\ii t(g_++g_-)}\\\frac{\ii\eul^{-\ii\phi}}{\hat a_+\hat a_-}\delta^{-2}\eul^{\ii t(g_++g_-)}&0\end{pmatrix},&&\\
&\hat v_4^{(4)}=\begin{pmatrix}0&\ii\nu_1^2\delta^2\eul^{-\ii t(g_++g_-)}\\\ii\nu_1^{-2}\delta^{-2}\eul^{\ii t(g_++g_-)}&0\end{pmatrix},&&\hat v_5^{(4)}=\begin{pmatrix}1&0\\\frac{\hat b^*}{\hat a^2\hat a^*}\delta^{-2}\eul^{2\ii tg}&1\end{pmatrix},\\
&\hat v_6^{(4)}=\begin{pmatrix}\frac{\eul^{\ii t(g_+-g_-)}}{\hat a\hat a^*}&0\\0&\hat a\hat a^*\eul^{-\ii t(g_+-g_-)}\end{pmatrix},&&\hat v_7^{(4)}=\begin{pmatrix}1&-\frac{\hat b}{\hat a^*}\delta^2\eul^{-2\ii tg}\\0&1\end{pmatrix},\\
&\hat v_8^{(4)}=\eul^{-\ii tg_-\sigma_3}\begin{pmatrix}1&-\hat a\hat b\delta^2\\0&1\end{pmatrix}\eul^{\ii tg_+\sigma_3},&&
\end{alignat*}
and is extended to the part of $\Sigma^{(4)}$ that lies in the lower half-plane by means of the symmetry \eqref{eq:vjsym}.
\begin{figure}[ht]
\centering\includegraphics[scale=.7]{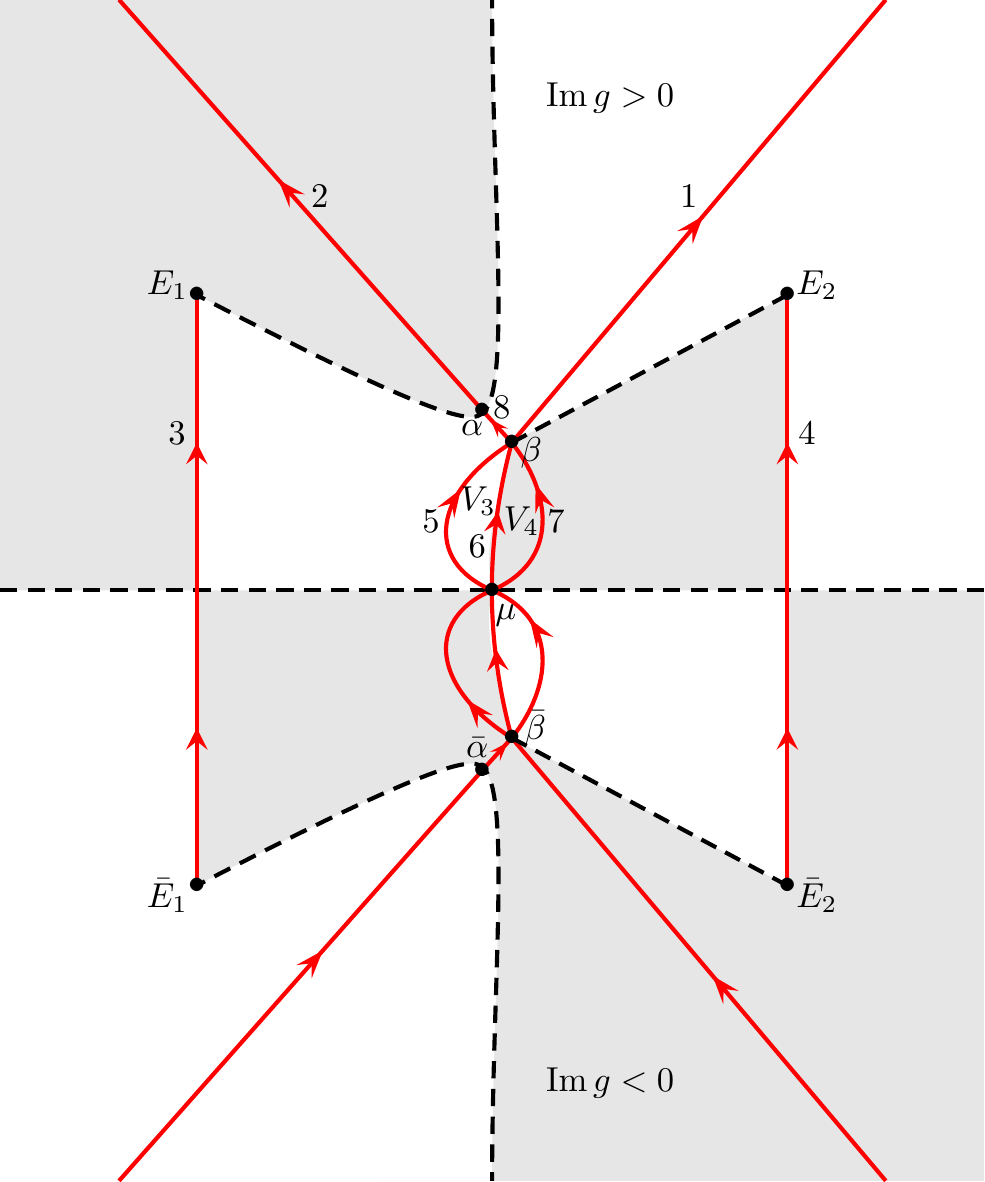}
\caption{The jump contour $\Sigma^{(4)}$ (red with arrows indicating orientation) together with the set where $\Im g=0$ (dashed black). The region where $\Im g<0$ is shaded and the region where $\Im g>0$ is white.} 
\label{fig:jump-contour-4}
\end{figure}

\begin{remark*}
The paths labeled by $5$, $6$, $7$, and $8$ in Figure~\ref{fig:jump-contour-4} are labeled by $12$, $11$, $10$ in \cite{BLS22}*{Figure 12}, and $5$ in \cite{BLS22}*{Figure 10}. Moreover, $(\hat v_2^{(4)})_{12}$ has the opposite sign compared to that in \cite{BLS22}.
\end{remark*}

\subsection{Fifth transformation}

The purpose of the fifth transformation is to remove the factors $\hat a\hat a^*=aa^*$ from the jump across $\gamma_{(\bar\beta,\beta)}$, i.e., from $v_6^{(4)}$ in the upper half-plane.

As we did with $\delta$ in \cite{BLS22}*{Section 4.2}, we introduce a complex-valued function $\tilde\delta(k)\equiv\tilde\delta(\xi,k)$ by
\begin{equation}  \label{eq:deltatilde}
\tilde\delta(\xi,k)=\eul^{-\frac{1}{2\pi\ii}\int_{\gamma_{(\mu,\beta)}}\frac{\ln(a(s)a^*(s))}{s-k}\dd s}\eul^{\frac{1}{2\pi\ii}\int_{\gamma_{(\bar\beta,\mu)}}\frac{\ln(a(s)a^*(s))}{s-k}\dd s},\quad k\in\D{C}\setminus\gamma_{(\bar\beta,\beta)},\ (x,t)\in\D{S},
\end{equation}
where the branch of the logarithm is chosen so that $\ln(a(s)a^*(s))$ is continuous on each contour and strictly positive for $s=\mu$. Since $a(k)$ has no zeros or poles, the function $aa^*$ is nonzero and finite everywhere on the contours. The function $\tilde\delta$ is in general singular at $\beta$ and $\bar\beta$. Hence we cut out small neighborhoods of these points. Thus let
\begin{equation}  \label{eq:R}
R\equiv R(\xi,t)\coloneqq c(\xi)t^{-1/2},
\end{equation}
where $c(\xi)>c>0$ is independent of $t$ (we will fix $c(\xi)$ below). 

For $r>0$ and $z\in\D{C}$, we let $D_r(z)$ denote the open disk of radius $r$ centered at $z$. Since $\abs{\alpha-\beta}=\ord(t^{-1/2-\delta})$, by increasing $T$ in the definition \eqref{eq:wedge} of $\D{S}$ if necessary, we may assume that $\alpha\in D_{R/2}(\beta)$ for all $(x,t)\in\D{S}$. 

The following lemma shows that $\tilde\delta$ is nonsingular in the complement of $D_R(\beta)\cup D_R(\bar\beta)$.

\begin{lemma} \label{lem:deltatilde}
For each $(x,t)\in\D{S}$, the function $\tilde\delta(\xi,k)$ has the following properties:
\begin{enumerate}[\rm(a)]
\item
$\tilde\delta(\xi,k)$ and $\tilde\delta(\xi,k)^{-1}$ are bounded and analytic functions of $k\in\D{C}\setminus(D_R(\beta)\cup D_R(\bar\beta)\cup\gamma_{(\bar\beta,\beta)})$.
\item
$\tilde\delta(\xi,k)$ obeys the symmetry
\[
\tilde\delta=(\tilde\delta^*)^{-1},\quad k\in\D{C}\setminus\gamma_{(\bar\beta,\beta)}.
\]
\item
Across $\gamma_{(\bar\beta,\beta)}$, $\tilde\delta(\xi,k)$ satisfies the jump condition
\begin{equation}  \label{eq:vdeltatilde}
\tilde\delta_+=\tilde\delta_-\times\begin{cases}
\frac{1}{aa^*},&k\in\gamma_{(\mu,\beta)},\\
aa^*,&k\in\gamma_{(\bar\beta,\mu)}.
\end{cases}
\end{equation}
\end{enumerate}
\end{lemma}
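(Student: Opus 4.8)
The plan is to write $\tilde\delta=\eul^{\chi}$ with
\[
\chi(\xi,k)\coloneqq\frac{1}{2\pi\ii}\left(\int_{\gamma_{(\bar\beta,\mu)}}-\int_{\gamma_{(\mu,\beta)}}\right)\frac{\ln(a(s)a^*(s))}{s-k}\,\dd s,
\]
so that $\tilde\delta=\eul^{\chi}$ and $\tilde\delta^{-1}=\eul^{-\chi}$, and to read off all three properties from the standard calculus of Cauchy-type integrals applied to $\chi$. First I would observe that, since $a$ is entire and (as noted below \eqref{eq:deltatilde}) has no zeros or poles, $aa^*$ is entire and nonvanishing, so with the branch fixed in \eqref{eq:deltatilde} the density $\ln(aa^*)$ is analytic on each of the two sub-arcs of $\gamma_{(\bar\beta,\beta)}$ and bounded on the (compact) arc. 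Hence $\chi$ is a Cauchy transform over $\gamma_{(\bar\beta,\beta)}$ and is analytic in $k\in\D{C}\setminus\gamma_{(\bar\beta,\beta)}$; therefore so are $\tilde\delta=\eul^{\chi}$ and $\tilde\delta^{-1}=\eul^{-\chi}$. This already gives the analyticity half of (a).

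The substance of (a) is the boundedness, which reduces to the behavior of $\chi$ near the three distinguished points $\mu$, $\beta$, $\bar\beta$ (away from any fixed neighborhood of $\{\mu,\beta,\bar\beta\}$ the integral defining $\chi$ is plainly bounded). Near $\mu$ the density jumps, and the resulting logarithmic singularity of $\chi$ has coefficient $\frac{1}{\pi\ii}\ln(a(\mu)a^*(\mu))$, which is \emph{purely imaginary} precisely because the branch in \eqref{eq:deltatilde} is chosen so that $\ln(a(\mu)a^*(\mu))>0$; consequently the singular factor of $\tilde\delta^{\pm1}$ near $\mu$ is of the form $\abs{\mu-k}^{0}\eul^{(c)\arg(\mu-k)}$ and stays bounded as $k\to\mu$. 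Near $\beta$ (and symmetrically $\bar\beta$), $\chi(k)+\frac{\ln(a(\beta)a^*(\beta))}{2\pi\ii}\ln(\beta-k)$ extends to a bounded function, so $\abs{\tilde\delta(k)^{\pm1}}\leq C\abs{\beta-k}^{\mp\Re\frac{\ln(a(\beta)a^*(\beta))}{2\pi\ii}}$ near $\beta$; since $k$ is excluded from $D_R(\beta)\cup D_R(\bar\beta)$ we have $\abs{\beta-k},\abs{\bar\beta-k}\geq R>0$, so the right-hand side is finite. (This is exactly what the statement asks: for each fixed $(x,t)\in\D{S}$ the bound is finite; it degrades as a power of $t$ as $R=c(\xi)t^{-1/2}\to0$, but $\tilde\delta^{\pm1}$ is genuinely singular at $\beta,\bar\beta$, so some excision is unavoidable — this is why the neighborhoods $D_R(\beta)$, $D_R(\bar\beta)$ were cut out.) I expect this boundedness near $\mu,\beta,\bar\beta$, and in particular the purely-imaginary-exponent observation at $\mu$, to be the only non-routine point.

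For (b), I would note that $\tilde\delta=(\tilde\delta^*)^{-1}$ is equivalent to $\chi(k)+\overline{\chi(\bar k)}=0$, and prove this by the substitution $s\mapsto\bar s$ in the integrals. Complex conjugation maps $\gamma_{(\mu,\beta)}$ onto $\gamma_{(\mu,\bar\beta)}$ reversing its orientation, fixes $\mu\in\D{R}$, and satisfies $\overline{a(s)a^*(s)}=a(\bar s)a^*(\bar s)$ with the chosen branch symmetric across $\mu$ (its value there being real and positive); combined with $\overline{1/(2\pi\ii)}=-1/(2\pi\ii)$, the two sign changes collapse to an overall minus together with a swap of the two arcs, giving $\overline{\chi(\bar k)}=-\chi(k)$ and hence $\tilde\delta\,\tilde\delta^*=\eul^{\chi+\chi^*}=1$. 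This uses that the arc $\gamma_{(\bar\beta,\beta)}$ of Figure~\ref{fig:jump-contour-3} is chosen symmetric under $k\mapsto\bar k$, which it is.

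Finally (c) is the Sokhotski–Plemelj formula. Across $\gamma_{(\mu,\beta)}$ only the first integral in $\chi$ has a jump, so $\chi_+-\chi_-=-\ln(a(k)a^*(k))$ and hence $\tilde\delta_+=\tilde\delta_-\,(aa^*)^{-1}$; across $\gamma_{(\bar\beta,\mu)}$ only the second integral jumps, so $\chi_+-\chi_-=\ln(a(k)a^*(k))$ and hence $\tilde\delta_+=\tilde\delta_-\,aa^*$, with the orientation of $\gamma_{(\bar\beta,\beta)}$ as in Figure~\ref{fig:jump-contour-3}. Everything except the boundedness estimates in (a) is routine Cauchy-integral calculus.
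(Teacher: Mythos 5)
Your proposal is correct and follows essentially the same route as the paper: the paper writes $\tilde\delta=\eul^{-f_\beta+f_{\bar\beta}}$ with $f_\beta=f_{\bar\beta}^*$ (your $\chi=-f_\beta+f_{\bar\beta}$) and cites an external lemma for the boundedness near $\mu$, which you instead re-derive on the spot by computing that the logarithmic singularity of $\chi$ at $\mu$ has coefficient $\frac{1}{\pi\ii}\ln\abs{a(\mu)}^2\in\ii\D{R}$, so the exponent of $(\mu-k)$ is purely imaginary. Parts (b) and (c) are the same conjugation-symmetry and Plemelj calculations the paper alludes to with ``follow easily from the definition.''
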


\begin{proof}
Since $\ln(\abs{a(\mu)}^2)>0$, \cite{BLS22}*{Lemma C.1} shows that $\tilde\delta(\xi,k)$ is bounded as $k$ approaches $\mu$. The other properties follow easily from the definition \eqref{eq:deltatilde}. We can indeed write $\tilde\delta=\eul^{-f_{\beta}+f_{\bar\beta}}$ with
\[
f_{\beta}(k)\coloneqq\frac{1}{2\pi\ii}\int_{\gamma_{(\mu,\beta)}}\frac{\ln(a(s)a^*(s))}{s-k}\dd s,\qquad f_{\bar\beta}(k)\coloneqq\frac{1}{2\pi\ii}\int_{\gamma_{(\bar\beta,\mu)}}\frac{\ln(a(s)a^*(s))}{s-k}\dd s,
\]
and $f_{\beta}=f_{\bar\beta}^*$.
\end{proof}

Let $\epsilon\in(0,1/2)$ be such that the $\xi$-dependent disks 
\begin{equation} \label{eq:threedisks}
D_{\epsilon}(\beta),\quad D_{\epsilon}(\mu),\quad D_{\epsilon}(\bar\beta)
\end{equation}
are disjoint from each other and from the cuts $\Sigma_1$ and $\Sigma_2$ for all $(x,t)\in\D{S}$. Increasing $T$ in \eqref{eq:wedge} if necessary, we may assume that $R(\xi,t)<\epsilon/2$ for all $(x,t)\in\D{S}$.

\begin{figure}[ht]
\centering\includegraphics[scale=.7]{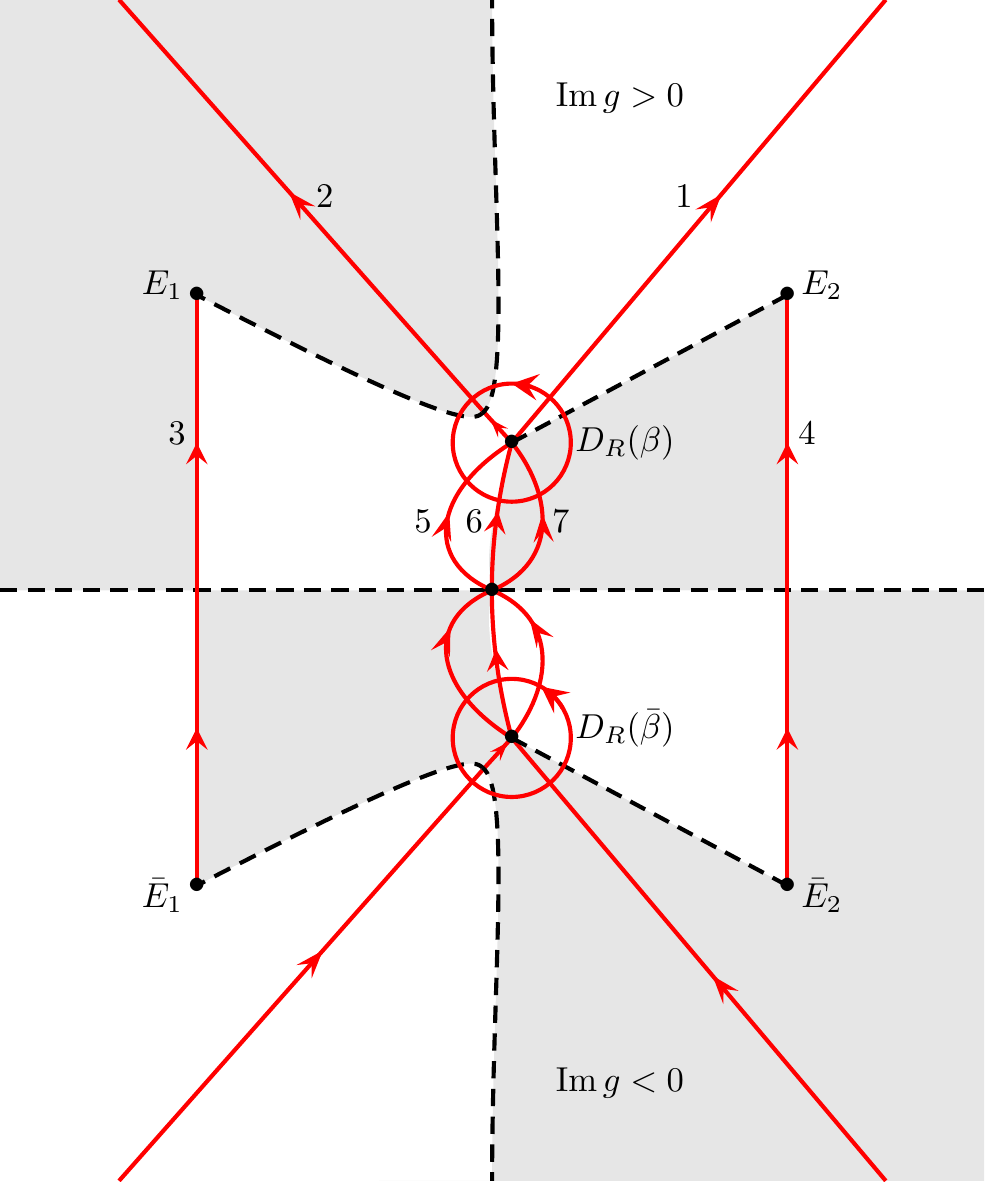}
\caption{The jump contour $\Sigma^{(5)}=\Sigma^{(6)}$ (red with arrows indicating orientation) together with the set where $\Im g=0$ (dashed black). The region where $\Im g<0$ is shaded and the region where $\Im g>0$ is white.} 
\label{fig:jump-contour-6}
\end{figure}

We define $\hat m^{(5)}\equiv\hat m^{(5)}(x,t,k)$ by
\[
\hat m^{(5)}=\hat m^{(4)}\times\begin{cases}
\tilde\delta^{-\sigma_3},&k\in\D{C}\setminus(D_R(\beta)\cup D_R(\bar\beta)),\\
I,&k\in D_R(\beta)\cup D_R(\bar\beta).
\end{cases}
\]
We define the complex-valued function $\delta_2(k)\equiv\delta_2(\xi,k)$ by
\[
\delta_2\coloneqq\delta\tilde\delta.
\]
Using Lemma~\ref{lem:deltatilde} we see that $\hat m$ satisfies the RH problem \eqref{eq:rhp0} iff $\hat m^{(5)}$ satisfies the RH problem \eqref{eq:mj} with $j=5$, where $\Sigma^{(5)}\coloneqq\Sigma^{(4)}\cup\partial D_R(\beta)\cup\partial D_R(\bar\beta)$ and the jump matrix $\hat v^{(5)}$ is given in $\D{C}^+\setminus\overline{D_R(\beta)}$ (see Figure~\ref{fig:jump-contour-6}) by
\begin{alignat*}{2}
&\hat v_1^{(5)}=\begin{pmatrix}1&0\\\frac{\hat b^*}{\hat a}\delta_2^{-2}\eul^{2\ii tg}&1\end{pmatrix},&\quad&\hat v_2^{(5)}=\begin{pmatrix}1&-\hat a\hat b\delta_2^2\eul^{-2\ii tg}\\0&1\end{pmatrix},\\
&\hat v_3^{(5)}=\begin{pmatrix}0&\ii\hat a_+\hat a_-\eul^{\ii\phi}\delta_2^2\eul^{-\ii t(g_++g_-)}\\\frac{\ii\eul^{-\ii\phi}}{\hat a_+\hat a_-}\delta_2^{-2}\eul^{\ii t(g_++g_-)}&0\end{pmatrix},&&\\
&\hat v_4^{(5)}=\begin{pmatrix}0&\ii\nu_1^2\delta_2^2\eul^{-\ii t(g_++g_-)}\\\ii\nu_1^{-2}\delta_2^{-2}\eul^{\ii t(g_++g_-)}&0\end{pmatrix},&&\hat v_5^{(5)}=\begin{pmatrix}1&0\\\frac{\hat b^*}{\hat a^2\hat a^*}\delta_2^{-2}\eul^{2\ii tg}&1\end{pmatrix},\\
&\hat v_6^{(5)}=\begin{pmatrix}\eul^{\ii t(g_+-g_-)}&0\\0&\eul^{-\ii t(g_+-g_-)}\end{pmatrix},&&\hat v_7^{(5)}=\begin{pmatrix}1&-\frac{\hat b}{\hat a^*}\delta_2^2\eul^{-2\ii tg}\\0&1\end{pmatrix},
\end{alignat*}
where $\hat v_l^{(5)}$ denotes the restriction of $\hat v^{(5)}$ to the contour labeled by $l$ in Figure~\ref{fig:jump-contour-6}, outside of $\overline{D_R(\beta)}$.

In order to give an expression for $\hat v^{(5)}$ on the part of the contour that lies in $\overline{D_R(\beta)}$, we let $\C{Y}\coloneqq\Sigma^{(5)}\cap D_{\epsilon}(\beta)$ denote the restriction of $\Sigma^{(5)}$ to $D_{\epsilon}(\beta)$ and write $\C{Y}=\cup_{l=1}^{16}\C{Y}_l$ where the curves $\accol{\C{Y}_l}_1^{16}$ are as in Figure \ref{fig:jump-contour-Y}. 
\begin{figure}[ht]
\centering\includegraphics[scale=.8]{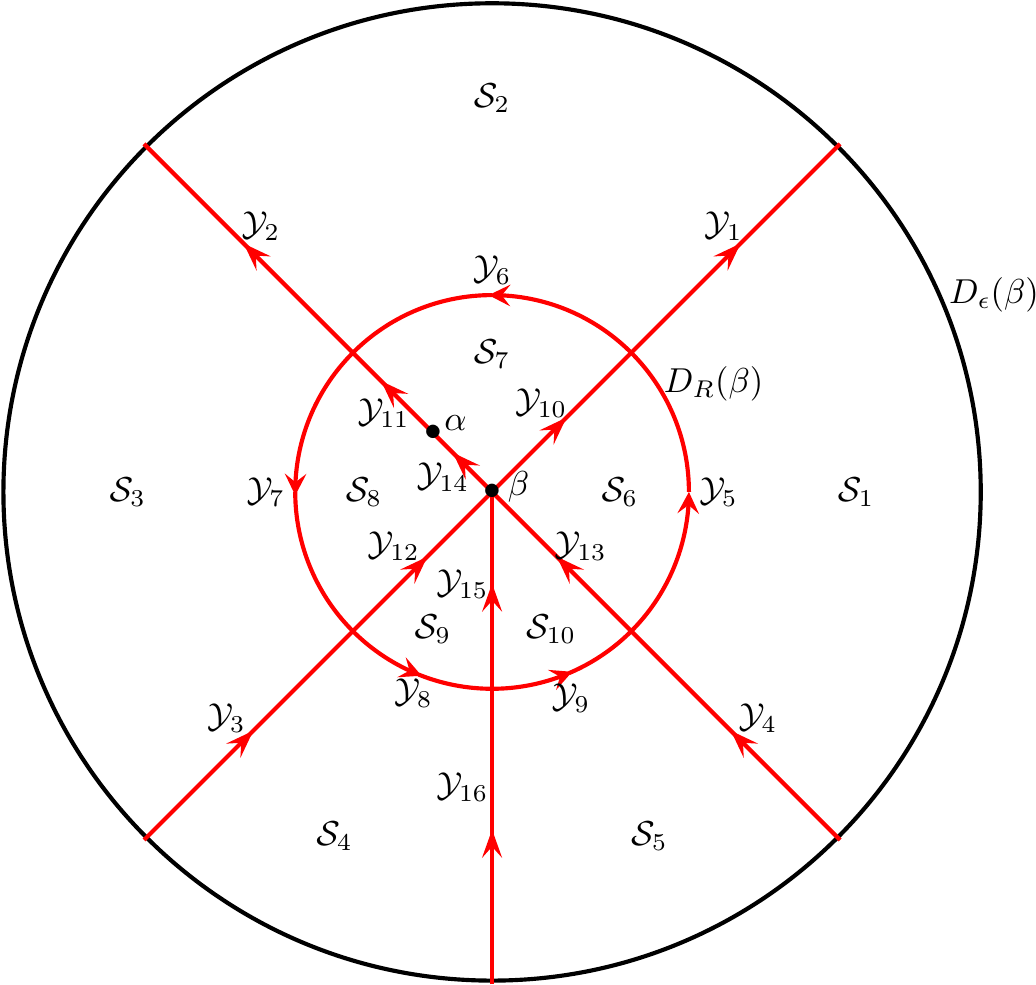}
\caption{The contour $\C{Y}=\Sigma^{(5)}\cap D_{\epsilon}(\beta)=\cup_{j=1}^{16}\C{Y}_j$ in the disk of radius $\epsilon$ centered at $\beta$ and the domains $\accol{S_j}_1^{10}$.} 
\label{fig:jump-contour-Y}
\end{figure}

Then $\hat v^{(5)}$ is given in $D_{\epsilon}(\beta)$ by
\begin{alignat*}{2}
&\hat v_{\C{Y}_1}^{(5)}=\begin{pmatrix}1&0\\\frac{\hat b^*}{\hat a}\delta_2^{-2}\eul^{2\ii tg}&1\end{pmatrix},&\qquad&\hat v_{\C{Y}_2}^{(5)}=\begin{pmatrix}1&-\hat a\hat b\delta_2^2\eul^{-2\ii tg}\\0&1\end{pmatrix},\\
&\hat v_{\C{Y}_3}^{(5)}=\begin{pmatrix}1&0\\\frac{\hat b^*}{\hat a^2\hat a^*}\delta_2^{-2}\eul^{2\ii tg}&1\end{pmatrix},&&\hat v_{\C{Y}_4}^{(5)}=\begin{pmatrix}1&-\frac{\hat b}{\hat a^*}\delta_2^2\eul^{-2\ii tg}\\0&1\end{pmatrix},\\
&\hat v_{\C{Y}_5}^{(5)}=\hat v_{\C{Y}_6}^{(5)}=\hat v_{\C{Y}_7}^{(5)}=\hat v_{\C{Y}_8}^{(5)}=\hat v_{\C{Y}_9}^{(5)}=\tilde\delta^{\sigma_3},&&\\
&\hat v_{\C{Y}_{10}}^{(5)}=\begin{pmatrix}1&0\\\frac{\hat b^*}{\hat a}\delta^{-2}\eul^{2\ii tg}&1\end{pmatrix},&&\hat v_{\C{Y}_{11}}^{(5)}=\begin{pmatrix}1&-\hat a\hat b\delta^2\eul^{-2\ii tg}\\0&1\end{pmatrix},\\
&\hat v_{\C{Y}_{12}}^{(5)}=\begin{pmatrix}1&0\\\frac{\hat b^*}{\hat a^2\hat a^*}\delta^{-2}\eul^{2\ii tg}&1\end{pmatrix},&&\hat v_{\C{Y}_{13}}^{(5)}=\begin{pmatrix}1&-\frac{\hat b}{\hat a^*}\delta^2\eul^{-2\ii tg}\\0&1\end{pmatrix},\\
&\hat v_{\C{Y}_{14}}^{(5)}=\eul^{-\ii tg_-\sigma_3}\begin{pmatrix}1&-\hat a\hat b\delta^2\\0&1\end{pmatrix}\eul^{\ii tg_+\sigma_3},&&\\
&\hat v_{\C{Y}_{15}}^{(5)}=\begin{pmatrix}\frac{\eul^{\ii t(g_+-g_-)}}{\hat a\hat a^*}&0\\0&\hat a\hat a^*\eul^{-\ii t(g_+-g_-)}\end{pmatrix},&&\hat v_{\C{Y}_{16}}^{(5)}=\begin{pmatrix}\eul^{\ii t(g_+-g_-)}&0\\0&\eul^{-\ii t(g_+-g_-)}\end{pmatrix},
\end{alignat*}
where $\hat v_{\C{Y}_l}^{(5)}$ denotes the restriction of $\hat v^{(5)}$ to $\C{Y}_l$. We extend the definition of $\hat v^{(5)}$ to the lower half-plane by means of the symmetry \eqref{eq:vjsym}.
\subsection{Sixth transformation}

The purpose of the sixth transformation is to make the jumps across the branch cuts $\Sigma_1$ and $\Sigma_2$ constant in $k$. Let $\Sigma^{\model}\coloneqq\Sigma_1\cup\Sigma_2$ denote the union of the branch cuts $\Sigma_1$ and $\Sigma_2$ oriented as in Figure~\ref{fig:jump-contour-model}.

Given a real number $\omega_1\equiv\omega_1(\xi)$, we define the function $\C{H}\equiv\C{H}(\xi,k)$ for $k\in\Sigma^{\model}\cap\D{C}^+$ by
\begin{equation}   \label{eq:Hcal}
\C{H}=\begin{cases}
\frac{2\omega_1-\ii\ln(\hat a_+\hat a_-\delta_2^2\eul^{\ii\phi})}{\tilde w_+}, &k\in\Sigma_1\cap\D{C}^+,\\
-\frac{\ii\ln(\nu_1^2\delta_2^2)}{\tilde w_+},&k\in\Sigma_2\cap\D{C}^+,
\end{cases}\quad(x,t)\in\D{S},
\end{equation}
where $\tilde w$ is defined in \eqref{eq:wtilde}, and extend it to $\Sigma^{\model}\cap\D{C}^-$ by the symmetry
\begin{equation}  \label{eq:Hcalsym}
\C{H}=\C{H}^*,
\end{equation}
i.e., $\C{H}(\xi,k)=\overline{\C{H}(\xi,\bar k)}$. The branches of the logarithms in \eqref{eq:Hcal} can be chosen arbitrarily as long as $\C{H}(\xi,k)$ is a continuous function of $k$ on each of the three segments $\Sigma_1\cap\D{C}^+$, $\Sigma_1\cap\D{C}^-$, and $\Sigma_2$ (such a choice exists because $|\nu_1^2 \delta_2^2| = 1$ at the point where $\Sigma_2$ crosses the real line).

We define $\tilde h\equiv\tilde h(\xi,k)$ by
\begin{equation}   \label{eq:htilde}
\tilde h(\xi,k)=\frac{\tilde w(k)}{2\pi\ii}\int_{\Sigma^{\model}}\frac{\C{H}(\xi,s)}{s-k}\dd s,\quad k\in\D{C}\setminus\Sigma^{\model}.
\end{equation}
In general, the function $\tilde h$ has a pole at $k=\infty$. The following lemma shows that by choosing $\omega_1(\xi)$ appropriately, the pole at $\infty$ can be removed.

\begin{lemma} \label{omega-1}
There is a unique choice of $\omega_1(\xi)$ such that the function $\tilde h(\xi,k)$ defined in \eqref{eq:htilde} has the following properties:
\begin{enumerate}[\rm(a)]
\item
$\tilde h$ obeys the symmetry $\tilde h=\tilde h^*$, \emph{i.e.},
\begin{equation}  \label{eq:htildesym}
\tilde h(\xi,k)=\overline{\tilde h(\xi,\bar k)},\quad k\in\hat{\D{C}}\setminus\Sigma^{\model},\ (x,t)\in\D{S},
\end{equation}
\item
As $k$ goes to infinity,
\begin{equation}  \label{eq:htildeas}
\tilde h(\xi,k)=\tilde h(\xi,\infty)+\ord(k^{-1}),\quad k\to\infty,\ (x,t)\in\D{S},
\end{equation}
where
\begin{equation}   \label{eq:htildeinfty}
\tilde h(\xi,\infty)=-\frac{1}{2\pi\ii}\int_{\Sigma^{\model}}s\C{H}(\xi,s)\dd s
\end{equation}
is a real-valued bounded function of $\xi$.
\item
For each $(x,t)\in\D{S}$, $\eul^{\ii\tilde h(\xi,k)\sigma_3}$ is an analytic function of $k\in\hat{\D{C}}\setminus\Sigma^{\model}$.
\item
$\eul^{\ii\tilde h(\xi,k)\sigma_3}$ satisfies the uniform bound
\[
\abs{\eul^{\ii\tilde h(\xi,k)\sigma_3}}\leq C,\quad k\in\hat{\D{C}}\setminus\Sigma^{\model},\ (x,t)\in\D{S}.
\]
\item
$\tilde h(\xi,k)$ satisfies the following jump conditions across $\Sigma^{\model}$:
\begin{equation}   \label{eq:vhtilde}
\tilde h_++\tilde h_-=\begin{cases}
2\omega_1-\ii\ln(\hat a_+\hat a_-\delta_2^2\eul^{\ii\phi}),&k\in\Sigma_1\cap\D{C}^+,\\
2\omega_1+\ii\ln(\hat a_+^*\hat a_-^*\delta_2^{-2}\eul^{-\ii\phi}),&k\in\Sigma_1\cap\D{C}^-,\\
-\ii\ln(\nu_1^2\delta_2^2),&k\in\Sigma_2.
\end{cases}
\end{equation}
\end{enumerate}
\end{lemma}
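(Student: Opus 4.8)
The plan is to construct $\tilde h$ explicitly via the Cauchy-type formula \eqref{eq:htilde}, reading off each property from the structure of the integral. First I would note that $\tilde w(k)$ has simple zeros at the endpoints of $\Sigma^{\model}$ and behaves like $k^2 + \ord(k)$ as $k\to\infty$; since the integral $\frac{1}{2\pi\ii}\int_{\Sigma^{\model}}\frac{\C{H}(\xi,s)}{s-k}\dd s$ has the large-$k$ expansion $-\frac{1}{k}\cdot\frac{1}{2\pi\ii}\int_{\Sigma^{\model}}\C{H}\,\dd s - \frac{1}{k^2}\cdot\frac{1}{2\pi\ii}\int_{\Sigma^{\model}}s\C{H}\,\dd s + \ord(k^{-3})$, the product $\tilde h(\xi,k)$ has a potential pole of order $1$ at infinity with coefficient proportional to $\int_{\Sigma^{\model}}\C{H}(\xi,s)\,\dd s$. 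The choice of $\omega_1(\xi)$ enters $\C{H}$ linearly (it multiplies $\frac{2}{\tilde w_+}$ on $\Sigma_1\cap\D{C}^+$ and, by the symmetry \eqref{eq:Hcalsym}, $\frac{2}{\tilde w_-}$ on $\Sigma_1\cap\D{C}^-$), so $\int_{\Sigma^{\model}}\C{H}\,\dd s$ is an affine function of $\omega_1$ with nonzero slope $\frac{4}{2\pi\ii}\int_{\Sigma_1\cap\D{C}^+}\frac{\dd s}{\tilde w_+(s)}$; hence there is a \emph{unique} $\omega_1(\xi)$ killing the pole, which gives (b) with $\tilde h(\xi,\infty)$ as in \eqref{eq:htildeinfty}.

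Next I would verify the symmetry (a): because $\tilde w^* = \tilde w$ (the branch of $\tilde w$ in \eqref{eq:wtilde} is real and positive for large real $k$ and $\Sigma^{\model}$ is symmetric under conjugation) and $\C{H} = \C{H}^*$ by \eqref{eq:Hcalsym}, a change of variables $s \mapsto \bar s$ in \eqref{eq:htilde} together with reversal of orientation shows $\tilde h(\xi,\bar k) = \overline{\tilde h(\xi,k)}$, i.e. \eqref{eq:htildesym}; in particular $\tilde h(\xi,\infty)$ is real, and one checks from \eqref{eq:htildeinfty} that it is bounded in $\xi$ because $\C{H}$ involves only $\ln(\hat a_+\hat a_- \delta_2^2 e^{\ii\phi})$, $\ln(\nu_1^2\delta_2^2)$, and $\omega_1(\xi)$, all of which are bounded uniformly on $\D{S}$ (using the $\delta$-bound from Section~4.2, the analogous bound on $\tilde\delta$ from Lemma~\ref{lem:deltatilde}, and the fact that $\hat a = a\nu_1$ is bounded and bounded away from $0$ near the compact arcs $\Sigma_1, \Sigma_2$). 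The jump relation (e) is the standard Plemelj/Sokhotski computation: $\tilde w_\pm$ differ by a sign across $\Sigma^{\model}$ while the Cauchy integral's boundary values satisfy $\bigl(\frac{1}{2\pi\ii}\int\bigr)_+ - \bigl(\frac{1}{2\pi\ii}\int\bigr)_- = \C{H}$ and $\bigl(\frac{1}{2\pi\ii}\int\bigr)_+ + \bigl(\frac{1}{2\pi\ii}\int\bigr)_-$ equals the principal-value integral; combining these with $\tilde w_+ = -\tilde w_-$ yields $\tilde h_+ + \tilde h_- = \tilde w_+ \C{H}$, and substituting the two cases of \eqref{eq:Hcal} (and their conjugates on $\Sigma_1\cap\D{C}^-$) gives exactly \eqref{eq:vhtilde}.

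For (c), analyticity of $\eul^{\ii\tilde h\sigma_3}$ on $\hat{\D{C}}\setminus\Sigma^{\model}$ is clear since $\tilde h$ is analytic there and, by (b) with the pole removed, bounded at $\infty$. The uniform bound (d) is the step I expect to require the most care: away from the endpoints $\bar E_1, E_1, \bar E_2, E_2$ one has $\abs{\tilde h} \le C$ directly from boundedness of $\C{H}$ and of the Cauchy operator on the fixed smooth arcs, but near each endpoint $\tilde w(k)$ vanishes like $\sqrt{k - E_j}$ while the Cauchy integral may blow up logarithmically — the product must stay bounded, which forces one to check that the $\ord(\log)$ singularity of $\int \frac{\C{H}}{s-k}\dd s$ is tamed by the $\ord(\sqrt{\cdot})$ vanishing of $\tilde w$; this is where the continuity requirement imposed on $\C{H}$ (and the fact that $\abs{\nu_1^2\delta_2^2}=1$ at the crossing of $\Sigma_2$ with $\D{R}$, so no jump in $\arg$ is introduced there) is used. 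Since $\abs{\eul^{\ii\tilde h\sigma_3}} = \max\{\eul^{-\Im\tilde h}, \eul^{\Im\tilde h}\} = \eul^{\abs{\Im\tilde h}}$ and $\abs{\Im\tilde h} \le \abs{\tilde h} \le C$, the bound (d) follows. I would handle this local analysis by the usual device of writing $\C{H}(\xi,s) = \C{H}(\xi, E_j) + (\C{H}(\xi,s) - \C{H}(\xi,E_j))$ near each endpoint, using that the difference is Hölder, so its Cauchy integral is bounded, while the constant piece produces an explicit elementary antiderivative whose product with $\tilde w$ is manifestly bounded; uniformity in $(x,t)\in\D{S}$ comes from uniform Hölder bounds on $\C{H}$, which in turn follow from the uniform bounds on $\delta$, $\tilde\delta$, and the (entire, hence locally uniformly smooth) functions $a$, $b$.
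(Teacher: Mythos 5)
Your construction, the removal of the pole at infinity by choosing $\omega_1$, the Schwarz-symmetry argument for (a), and the Plemelj computation for (e) are all correct and are essentially the argument one expects (the paper only refers to the genus-3 analogue). However, your argument for the uniform bound (d) has a genuine gap.

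The problem is your assertion that $\C{H}$ is bounded on $\Sigma^{\model}$ and Hölder up to the endpoints, and consequently that $\abs{\tilde h}\le C$, from which you infer $\abs{\Im\tilde h}\le\abs{\tilde h}\le C$. Both steps fail. From \eqref{eq:Hcal}, $\C{H}=\varphi/\tilde w_+$ with $\varphi$ the numerator, and $\tilde w_+$ has a simple square-root zero at each endpoint of $\Sigma^{\model}$; so at $E_2,\bar E_2$ the function $\C{H}$ already blows up like $(s-E_2)^{-1/2}$, which makes the proposed decomposition $\C{H}(\xi,s)=\C{H}(\xi,E_j)+(\C{H}(\xi,s)-\C{H}(\xi,E_j))$ meaningless ($\C{H}(\xi,E_j)$ is not finite). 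The standard device you have in mind should be applied to $\varphi=\tilde w_+\C{H}$, not to $\C{H}$: one writes $\tilde h(k)=\frac{\tilde w(k)}{2\pi\ii}\int\frac{\varphi(s)}{\tilde w_+(s)(s-k)}\dd s$ and Taylor-expands $\varphi$ at the endpoint; the factor $\tilde w(k)$ then compensates the induced $(k-E_j)^{-1/2}$ singularity of the weighted Cauchy integral.

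Worse, at $E_1$ and $\bar E_1$ the numerator $\varphi$ is \emph{itself} unbounded: since $\hat a=a\nu_1$ and $\nu_1(k)=(\tfrac{k-E_1}{k-\bar E_1})^{1/4}$ vanishes at $E_1$, one has $\hat a_+\hat a_-\sim c\,(k-E_1)^{1/2}$ as $k\to E_1$ along $\Sigma_1$, so $\ln(\hat a_+\hat a_-\delta_2^2\eul^{\ii\phi})\sim\tfrac12\ln(k-E_1)+\ord(1)$ and hence $\varphi=2\omega_1-\ii\ln(\cdots)$ inherits a logarithmic singularity. This propagates to a logarithmic singularity of $\tilde h$ itself at $E_1,\bar E_1$ — which is precisely why the lemma states the analyticity and boundedness properties (c) and (d) in terms of $\eul^{\ii\tilde h\sigma_3}$ rather than for $\tilde h$ directly. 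The inequality $\abs{\Im\tilde h}\le\abs{\tilde h}\le C$ is therefore unavailable: $\tilde h$ is not uniformly bounded, and establishing $\abs{\Im\tilde h}\le C$ requires a local analysis near $E_1,\bar E_1$ that tracks the coefficient of the $\ln(k-E_1)$ term and is not contained in your sketch.

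The rest of the proposal is sound: the argument that $\int_{\Sigma^{\model}}\C{H}\,\dd s$ depends affinely on $\omega_1$ and that the annihilation of the pole fixes $\omega_1$ uniquely, the conjugation argument for \eqref{eq:htildesym} using $\tilde w^*=\tilde w$ and $\C{H}^*=\C{H}$, the observation that reality of $\tilde h(\xi,\infty)$ then follows, and the Plemelj derivation $\tilde h_++\tilde h_-=\tilde w_+\C{H}$ for (e) are all correct.
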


\begin{proof}
The proof is similar to the analogous proof in the genus $3$ sector \cite{BLS22}*{Lemma 4.2}.
\end{proof}

We henceforth fix $\omega_1(\xi)$ to be the unique choice which ensures that $\tilde h(\xi,k)$ has the properties of Lemma~\ref{omega-1}. We define $\hat m^{(6)}\equiv\hat m^{(6)}(x,t,k)$ by
\[
\hat m^{(6)}(x,t,k)\coloneqq\eul^{-\ii\tilde h(\xi,\infty)\sigma_3}\hat m^{(5)}(x,t,k)\eul^{\ii\tilde h(\xi,k)\sigma_3}.
\]
By Lemma~\ref{omega-1}, $\hat m$ satisfies the RH problem \eqref{eq:rhp0} iff $\hat m^{(6)}$ satisfies the RH problem \eqref{eq:mj} with $j=6$, where $\Sigma^{(6)}=\Sigma^{(5)}$ and
\[
\hat v^{(6)}=\eul^{-\ii\tilde h_-\sigma_3}\hat v^{(5)}\eul^{\ii\tilde h_+\sigma_3}.
\]
The jump matrix $\hat v^{(6)}$ is given explicitly in $\D{C}^+\setminus \overline{D_R(\beta)}$ by
\begin{alignat*}{2}
&\hat v_1^{(6)}=\begin{pmatrix}1&0\\\frac{\hat b^*}{\hat a}\delta_2^{-2}\eul^{2\ii tg}\eul^{2\ii\tilde h}&1\end{pmatrix},&\qquad&\hat v_2^{(6)}=\begin{pmatrix}1&-\hat a\hat b\delta_2^2\eul^{-2\ii tg}\eul^{-2\ii\tilde h}\\0&1\end{pmatrix},\\
&\hat v_3^{(6)}=\begin{pmatrix}0&\ii\eul^{-2\ii(t\Omega_1+\omega_1)}\\\ii\eul^{2\ii(t\Omega_1+\omega_1)}&0\end{pmatrix},&&\hat v_4^{(6)}=\begin{pmatrix}0&\ii\\\ii&0\end{pmatrix},\\
&\hat v_5^{(6)}=\begin{pmatrix}1&0\\\frac{\hat b^*}{\hat a^2\hat a^*}\delta_2^{-2}\eul^{2\ii tg}\eul^{2\ii\tilde h}&1\end{pmatrix},&&\hat v_6^{(6)}=\begin{pmatrix}\eul^{\ii t(g_+-g_-)}&0\\0&\eul^{-\ii t(g_+-g_-)}\end{pmatrix},\\
&\hat v_7^{(6)}=\begin{pmatrix}1&-\frac{\hat b}{\hat a^*}\delta_2^2\eul^{-2\ii tg}\eul^{-2\ii\tilde h}\\0&1\end{pmatrix},&&
\end{alignat*}
where we used \eqref{eq:vhtilde} and the relations $g_++g_-=2\Omega_1$ along $\Sigma_1$ and $g_++g_-=0$ along $\Sigma_2$ \cite{BLS22}*{Lemma 3.2 (c)}. Here, $\omega_1\equiv\omega_1(\xi)$ and $\Omega_1\equiv\Omega_1(\xi)$ are two real constants and $\hat v_l^{(6)}$ denotes the restriction of $\hat v^{(6)}$ to the contour labeled by $l$ in Figure~\ref{fig:jump-contour-6}, outside of $\overline{D_R(\beta)}$. 

On the part of the contour that lies in $D_{\epsilon}(\beta)$ the jump matrix $\hat v^{(6)}$ is given by
\[
\hat v_{\C{Y}_l}^{(6)}=\eul^{-\ii\tilde h\sigma_3}\hat v_{\C{Y}_l}^{(5)}\eul^{\ii\tilde h\sigma_3},\quad l=1,\dots,16.
\]
That is
\begin{alignat*}{2}
&\hat v_{\C{Y}_1}^{(6)}=\begin{pmatrix}1&0\\\frac{\hat b^*}{\hat a}\delta_2^{-2}\eul^{2\ii tg}\eul^{2\ii\tilde h}&1\end{pmatrix},&\quad&\hat v_{\C{Y}_2}^{(6)}=\begin{pmatrix}1&-\hat a\hat b\delta_2^2\eul^{-2\ii tg}\eul^{-2\ii\tilde h}\\0&1\end{pmatrix},\\
&\hat v_{\C{Y}_3}^{(6)}=\begin{pmatrix}1&0\\\frac{\hat b^*}{\hat a^2\hat a^*}\delta_2^{-2}\eul^{2\ii tg}\eul^{2\ii\tilde h}&1\end{pmatrix},&&\hat v_{\C{Y}_4}^{(6)}=\begin{pmatrix}1&-\frac{\hat b}{\hat a^*}\delta_2^2\eul^{-2\ii tg}\eul^{-2\ii\tilde h}\\0&1\end{pmatrix},\\
&\hat v_{\C{Y}_5}^{(6)}=\hat v_{\C{Y}_6}^{(6)}=\hat v_{\C{Y}_7}^{(6)}=\hat v_{\C{Y}_8}^{(6)}=\hat v_{\C{Y}_9}^{(6)}=\tilde\delta^{\sigma_3},&&\\
&\hat v_{\C{Y}_{10}}^{(6)}=\begin{pmatrix}1&0\\\frac{\hat b^*}{\hat a}\delta^{-2}\eul^{2\ii tg}\eul^{2\ii\tilde h}&1\end{pmatrix},&&\hat v_{\C{Y}_{11}}^{(6)}=\begin{pmatrix}1&-\hat a\hat b\delta^2\eul^{-2\ii tg}\eul^{-2\ii\tilde h}\\0&1\end{pmatrix},\\
&\hat v_{\C{Y}_{12}}^{(6)}=\begin{pmatrix}1&0\\\frac{\hat b^*}{\hat a^2\hat a^*}\delta^{-2}\eul^{2\ii tg}\eul^{2\ii\tilde h}&1\end{pmatrix},&&\hat v_{\C{Y}_{13}}^{(6)}=\begin{pmatrix}1&-\frac{\hat b}{\hat a^*}\delta^2\eul^{-2\ii tg}\eul^{-2\ii\tilde h}\\0&1\end{pmatrix},\\
&\hat v_{\C{Y}_{14}}^{(6)}=\eul^{-\ii tg_-\sigma_3}\begin{pmatrix}1&-\hat a\hat b\delta^2\eul^{-2\ii\tilde h}\\0&1\end{pmatrix}\eul^{\ii tg_+\sigma_3},&&\\
&\hat v_{\C{Y}_{15}}^{(6)}=\begin{pmatrix}\frac{\eul^{\ii t(g_+-g_-)}}{\hat a\hat a^*}&0\\0&\hat a\hat a^*\eul^{-\ii t(g_+-g_-)}\end{pmatrix},&&\hat v_{\C{Y}_{16}}^{(6)}=\begin{pmatrix}\eul^{\ii t(g_+-g_-)}&0\\0&\eul^{-\ii t(g_+-g_-)}\end{pmatrix}.
\end{alignat*}
\section{Global parametrix}  \label{sec:model}

\begin{figure}[ht]
\centering\includegraphics[scale=.7]{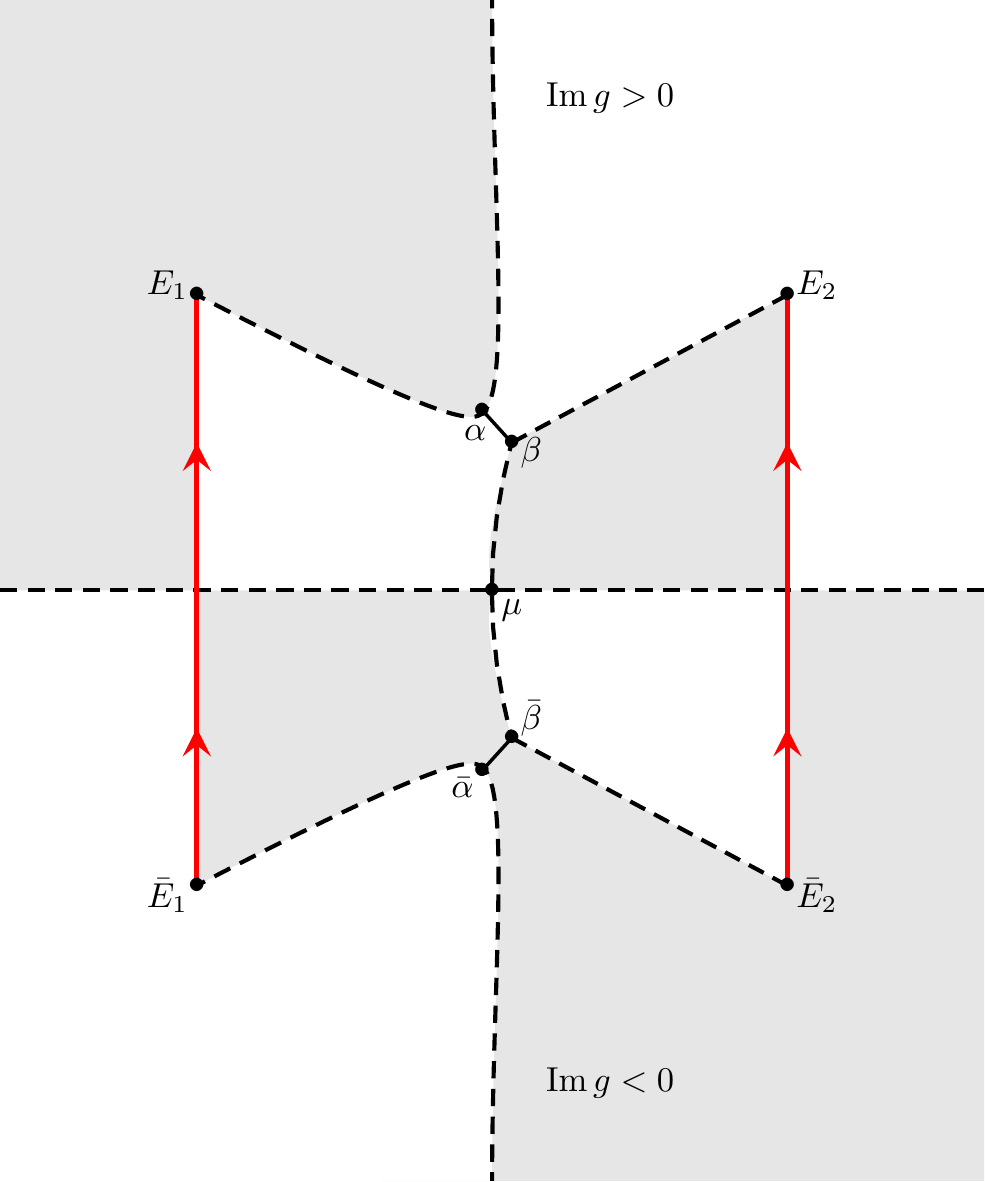}
\caption{The jump contour $\Sigma^{\model}=\Sigma_1\cup\Sigma_2$ (red with arrows indicating orientation) together with the set where $\Im g=0$ (dashed black).} 
\label{fig:jump-contour-model}
\end{figure}
Away from $\Sigma^{\model}$ and the critical points, the jump matrix $\hat v^{(6)}$ approaches the identity matrix as $t\to+\infty$ (since $\alpha\to\beta$ as $t\to+\infty$, this is true also for $\hat{v}_6^{(6)}$, see Section \ref{sec:final}). This leads us to expect that in the limit $t\to+\infty$, the solution $\hat m^{(6)}$ approaches the solution $m^{\model}$ of the RH problem
\begin{equation}   \label{eq:mmod}
\begin{cases}
m^{\model}(x,t,\,\cdot\,)\in I+\dot E^2(\hat{\D{C}}\setminus\Sigma^{\model}),&\\
m_+^{\model}(x,t,k)=m_-^{\model}(x,t,k)v^{\model}(x,t,k)&\text{for a.e. }k\in\Sigma^{\model},
\end{cases}
\end{equation}
where $v^{\model}$ denotes the restriction of $\hat v^{(6)}$ to $\Sigma^{\model}$, i.e.,
\begin{equation}   \label{eq:vmod}
v^{\model}=\begin{cases}
\begin{pmatrix}0&\ii\eul^{-2\ii(t\Omega_1(\xi)+\omega_1(\xi))}\\\ii\eul^{2\ii(t\Omega_1(\xi)+\omega_1(\xi))}&0\end{pmatrix},&k\in\Sigma_1,\\
\begin{pmatrix}0&\ii\\\ii&0\end{pmatrix},&k\in\Sigma_2.
\end{cases}
\end{equation}
The jump matrix $v^{\model}$ is off-diagonal and independent of $k$. This implies that we can write down an explicit solution of the RH problem \eqref{eq:mmod} in terms of theta functions.

Define the function $\nu\equiv\nu(k)$ for $k\in\D{C}\setminus\Sigma^{\model}$ by
\[
\nu\coloneqq\left(\frac{(k-E_1)(k-E_2)}{(k-\bar E_1)(k-\bar E_2)}\right)^{\frac{1}{4}},\quad k\in\D{C}\setminus\Sigma^{\model},
\]
where the branch of the fourth root is fixed by requiring that $\nu=1+\ord(k^{-1})$ as $k\to\infty$. Let $\hat\nu\colon\tilde M\to\hat{\D{C}}$ denote the function which is given by $\nu^2$ on the upper sheet and by $-\nu^2$ on the lower sheet of $\tilde M$, that is, $\hat\nu(k^\pm)=\pm\nu(k)^2$ for $k\in\D{C}\setminus\Sigma^{\model}$. Then $\hat\nu$ is a meromorphic function on $\tilde M$. Noting, for example, that $\hat\nu$ has two simple zeros at $E_1$, $E_2$, we see that $\hat\nu\colon\tilde M\to\hat{\D{C}}$ has degree two. Hence the function $\hat\nu-1$ has two zeros on $\tilde M$ counting multiplicity; we denote these zeros by $\infty^+,\,P_1\in\tilde M$. We define the complex constant $\tilde{\C{K}}$ by
\begin{equation}   \label{eq:calKtilde}
\tilde{\C{K}}\coloneqq\frac{1}{2}(1+\tau),
\end{equation}
where $\tau$ denotes the period $\int_{b_1}\tilde\zeta$.

We define the constant $\tilde d\in\D{C}$ by
\begin{equation}  \label{eq:dtilde}
\tilde d\coloneqq\tilde\varphi(P_1)+\tilde{\C{K}},
\end{equation}
where $\tilde\varphi\colon\tilde M\to\D{C}$ is the Abel map defined in \eqref{eq:tildephi}; if $P_1$ has projection in $\Sigma^{\model}$, then we fix the value of $\tilde d$ by letting $\tilde\varphi(P_1)$ denote the boundary value $\tilde\varphi_+(P_1)$ from the left say. We also define the complex-valued function $\tilde v\equiv\tilde v(\xi,t)$ by
\begin{equation}  \label{eq:vtilde}
\tilde v(\xi,t)\coloneqq-\frac{1}{\pi}\left(t\Omega_1(\xi)+\omega_1(\xi)\right).
\end{equation}

\begin{theorem}[Solution of the model RH problem] \label{thm:mmod}
For each choice of the real constants $\Omega_1$ and $\omega_1$ and for each $t\geq 0$, the RH problem \eqref{eq:mmod} has a unique solution $m^{\model}(x,t,k)$. Moreover, this solution satisfies
\begin{equation}   \label{eq:mmodinfty}
\lim_{k\to\infty}k(m^{\model}(x,t,k))_{12}=-\frac{\ii}{2}\Im(E_1+E_2)\times\frac{\tilde\Theta\left(\tilde\varphi(\infty^+)+\tilde d\right)\tilde\Theta\left(\tilde\varphi(\infty^+)-\tilde v(\xi,t)-\tilde d\right)}{\tilde\Theta\left(\tilde\varphi(\infty^+)+\tilde v(\xi,t)+\tilde d\right)\tilde\Theta\left(\tilde\varphi(\infty^+)-\tilde d\right)},
\end{equation}
where the limit is uniform with respect to $\arg k\in\croch{0,2\pi}$ and $\tilde\Theta$ is the Riemann theta function associated to the genus $1$ Riemann surface $\tilde M$, see \eqref{eq:tildetheta}.
\end{theorem}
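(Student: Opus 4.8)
\emph{Uniqueness.} Since $\det v^{\model}(x,t,k)=1$ for $k\in\Sigma^{\model}$, the scalar function $\det m^{\model}$ is analytic across $\Sigma^{\model}$ and tends to $1$ at $\infty$; its only possible singularities, at the branch points $E_1,\bar E_1,E_2,\bar E_2$, are removable because the jump is bounded there, so $m^{\model}$ has at worst a fourth‑root local behavior and $\det m^{\model}$ stays bounded. By Liouville, $\det m^{\model}\equiv1$, so every solution of \eqref{eq:mmod} is invertible. If $m_1,m_2$ both solve \eqref{eq:mmod}, then $m_1m_2^{-1}$ has no jump across $\Sigma^{\model}$, is bounded near the four branch points (the two solutions have the same fourth‑root local behavior, which cancels in the quotient), hence extends to an entire function tending to $I$ at $\infty$; Liouville gives $m_1\equiv m_2$.

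\emph{Existence.} Solvability can be obtained abstractly from a vanishing lemma (the homogeneous version of \eqref{eq:mmod} has only the zero solution, by the Liouville argument above), but I would instead construct the solution explicitly and invoke uniqueness. First reduce the constant off‑diagonal jumps: from the definition of $\nu$ one checks that $\nu_+=\ii^{\varepsilon}\nu_-$ with the \emph{same} sign $\varepsilon=\pm1$ across both $\Sigma_1$ and $\Sigma_2$, so a fixed matrix built from $\nu\pm\nu^{-1}$, corrected by a constant matrix, solves the RH problem with jump $\ii\sigma_1$ on all of $\Sigma^{\model}$. What remains is the constant twist on $\Sigma_1$ only, $v^{\model}|_{\Sigma_1}=\eul^{-\ii(t\Omega_1+\omega_1)\sigma_3}(\ii\sigma_1)\eul^{\ii(t\Omega_1+\omega_1)\sigma_3}$; this cannot be removed by a scalar diagonal factor without creating a non‑integrable power‑type singularity at $E_1,\bar E_1$, and must instead be realized as the quasi‑periodicity multiplier of a ratio of theta functions on $\tilde M$. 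So I would seek a solution whose first and second columns are the two‑sheet boundary values of single functions on $\tilde M$ of the form
\[
(\text{algebraic }\nu\text{-factor})\times\frac{\tilde\Theta\bigl(\tilde\varphi(P)\pm\tilde v+\tilde d\bigr)}{\tilde\Theta\bigl(\tilde\varphi(P)+\tilde d\bigr)}\times(\text{normalizing constant}),
\]
where $\tilde v=-\tfrac1\pi(t\Omega_1+\omega_1)$ is exactly the theta‑argument shift whose effect, via $\tilde\Theta(z+\tau)=\eul^{-2\pi\ii(z+\tau/2)}\tilde\Theta(z)$, is to insert the factor $\eul^{\pm2\ii(t\Omega_1+\omega_1)}$ across $\Sigma_1$, and $\tilde d=\tilde\varphi(P_1)+\tilde{\C{K}}$, with $\tilde{\C{K}}=\tfrac12(1+\tau)$ the zero of $\tilde\Theta$, is the shift dictated by the Riemann vanishing theorem so that the zero of the numerator theta cancels the pole of the $\nu$‑factor at the second zero $P_1$ of $\hat\nu-1$, leaving a function analytic off $\Sigma^{\model}$ with only $L^2$ singularities at the branch points.

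It then remains to verify that this function solves \eqref{eq:mmod}: using $\tilde\varphi_++\tilde\varphi_-=\const$ and $\tilde\varphi_+-\tilde\varphi_-\in\{\pm1,\pm\tau\}$ on $\Sigma_1$ and $\Sigma_2$ together with the quasi‑periodicity and evenness of $\tilde\Theta$, the jumps reproduce \eqref{eq:vmod} exactly; evaluating at $\infty^+$ and $\infty^-$ — using $\nu(\infty)=1$ and that $\infty^+$ is one of the two zeros of $\hat\nu-1$ — fixes the normalizing constants so that $m^{\model}\to I$, and by uniqueness this is $m^{\model}$. Finally, from $\nu^4=\prod_{j=1}^{2}\frac{k-E_j}{k-\bar E_j}=1-\frac{2\ii\Im(E_1+E_2)}{k}+\ord(k^{-2})$ one gets $\nu=1-\frac{\ii}{2}\Im(E_1+E_2)k^{-1}+\ord(k^{-2})$; expanding the $(1,2)$‑entry of the explicit solution to order $k^{-1}$, the algebraic factor contributes $-\frac{\ii}{2}\Im(E_1+E_2)k^{-1}$ and the theta ratios contribute their values at $\tilde\varphi(\infty^+)$, which after simplification yield \eqref{eq:mmodinfty}, the error being uniform in $\arg k\in[0,2\pi]$ because all the remainder terms are.

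\emph{Main difficulty.} The one genuinely delicate step is the design of the theta formula: one must arrange \emph{simultaneously} that the candidate be single‑valued on $\D{C}\setminus\Sigma^{\model}$ while acquiring exactly the multipliers $\ii$ on $\Sigma_2$ and $\ii\eul^{\mp2\ii(t\Omega_1+\omega_1)}$ on $\Sigma_1$, that the branch‑point pole of the $\nu$‑factor be cancelled by a theta zero (this is what forces the combination $\tilde d=\tilde\varphi(P_1)+\tilde{\C{K}}$), and that the normalization at $\infty$ hold — and all of this for every real $\Omega_1,\omega_1$ and every $t\ge0$. Essentially all of the work is bookkeeping of the half‑period $\tilde{\C{K}}$, of the orientations of the cycles $a_1,b_1$ and of the cuts $\Sigma_1,\Sigma_2$, and of the sign conventions in the quasi‑periodicity relations; the isolated $(\Omega_1,\omega_1,t)$ for which a denominator theta might vanish are covered by the abstract existence argument, so the closed form and \eqref{eq:mmodinfty} extend to them by continuity.
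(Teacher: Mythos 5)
Your approach is essentially the same as the paper's (the paper's proof is a pointer to \cite{BLS22}*{Theorem 5.1}, which is exactly the theta-function/algebraic-factor construction you outline), and the main conceptual ingredients — the $\nu$-factor for the $\ii\sigma_1$ jump, the theta ratio with shift $\tilde v$ to encode the $\Sigma_1$ multiplier, $\tilde d=\tilde\varphi(P_1)+\tilde{\C{K}}$ to place the compensating theta-zero, and the $1/k$-expansion using $\nu=1-\tfrac{\ii}{2}\Im(E_1+E_2)k^{-1}+\ord(k^{-2})$ — are all correct.

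Two places, however, are argued incorrectly or left genuinely open. First, your ``abstract existence via vanishing lemma'' is circular as written: the Liouville argument you give forms $m_1m_2^{-1}$, which presupposes the existence of an invertible solution $m_2$; that is a \emph{uniqueness} argument and cannot establish that the homogeneous problem has only the zero solution. A genuine vanishing lemma here has to exploit structure of $v^{\model}$, e.g.\ the Schwarz-type symmetry $v^{\model}(k)=\sigma_3\sigma_1\overline{v^{\model}(\bar k)}\sigma_1\sigma_3$ inherited from \eqref{eq:vjsym}, together with index-zero Fredholm theory; you never invoke this. Second — and this is the real gap — the statement asserts solvability for \emph{every} real $\Omega_1,\omega_1$ and every $t\ge 0$, which for the explicit construction requires that the denominator $\tilde\Theta\bigl(\tilde\varphi(\infty^+)+\tilde v(\xi,t)+\tilde d\bigr)$ never vanishes as $\tilde v$ sweeps a real line. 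This is a nontrivial reality statement about $\tilde\varphi(\infty^+)+\tilde\varphi(P_1)$ relative to the lattice $\D{Z}+\tau\D{Z}$ coming from the anti-holomorphic involution on $\tilde M$; it is not automatic, and your proposed fallback (``covered by the abstract existence argument, extend by continuity'') cannot rescue it because the fallback itself rests on the flawed vanishing lemma. You should either prove non-vanishing directly from the symmetry of $\tilde M$, or correctly set up the vanishing lemma and then argue that the closed formula has a removable singularity whenever the denominator vanishes. (A minor wording point: in the uniqueness argument the $(k-E_j)^{-1/4}$ singularities of $m_1$ and $m_2^{-1}$ do not ``cancel'' in the product — they compound to $(k-E_j)^{-1/2}$ — but since $m_1m_2^{-1}$ is analytic off $\Sigma^{\model}$ with no jump and subpolar growth at $E_j$, the singularity is removable; the conclusion stands but the reason is different.)
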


\begin{proof}
The proof is similar to the analogous proof in the genus $3$ sector \cite{BLS22}*{Theorem 5.1}.
\end{proof}

\section{Local parametrices}   \label{sec:locmod}

The solution $m^{\model}$ is a good approximation of $\hat m^{(6)}$ as $t\to+\infty$ except for $k$ near the five critical points $\accol{\alpha,\beta,\mu,\bar\alpha,\bar\beta}$. In this section, we introduce local model solutions which are good approximations of $\hat m^{(6)}$ near these critical points. More precisely, we define two local solutions, denoted by $m^{\beta}(x,t,k)$ and $m^{\mu}(x,t,k)$, which are good approximations of $\hat m^{(6)}$ for $k$ in the disks $D_{\epsilon}(\beta)$ and $D_{\epsilon}(\mu)$, respectively.
   
\subsection{Local model near $\BS\mu$}  \label{sec:locmu}

The local model $m^{\mu}$ near $\mu$ is defined as in the case of the genus $3$ sector \cite{BLS22}*{Section 6.3}.

\subsection{Local model near $\BS\beta$} \label{sec:locbeta}

\begin{remark}[Motivating remark]  \label{rem:motiv}
The derivative $\dd g/\dd k$ of the $g$-function for the genus $3$ sector behaves like $(k-\alpha)^{1/2}$ and $(k-\beta)^{1/2}$ as $k$ approaches the critical points $\alpha$ and $\beta$, respectively. If $\alpha$ and $\beta$ stay separated, we have $g(k)\sim(k-\alpha)^{3/2}$ as $k\to\alpha$ and $g(k)\sim(k-\beta)^{3/2}$ as $k\to\beta$. The contributions to the solution $q(x,t)$ from $\alpha$ and $\beta$ can then be computed using the Airy local model \cite{BLS22}*{Appendix B} and each contribution is of order $\ord(t^{-N})$ for any $N\geq 1$. However, if $\alpha$ approaches $\beta$, $\dd g/\dd k$ behaves roughly like $(k-\alpha)^{1/2}(k-\beta)^{1/2}\approx k-\beta$ for $k$ in $D_{\epsilon}(\beta)\setminus D_R(\beta)$, where $R$ is some radius such that $\abs{\alpha-\beta}\ll R<\epsilon$. Hence $g(k)$ behaves as if it had a double zero at $\beta$. This suggests the construction of a local model near $\beta$ as follows.
\end{remark}

Let $\accol{\C{S}_j}_1^{10}$ denote the open subsets of $D_{\epsilon}(\beta)$ shown in Figure~\ref{fig:jump-contour-Y} which separate the $\C{Y}_j$. Let $\gamma_{(\alpha,\infty)}$ denote the contour from $\alpha$ to infinity labeled by $2$ in Figure~\ref{fig:jump-contour-4}. Let $\gamma_{\cut}\coloneqq\gamma_{(\beta,\alpha)}\cup\gamma_{(\alpha,\infty)}$. Let $\C{S}_+\coloneqq\C{S}_3\cup\C{S}_4\cup\C{S}_8\cup\C{S}_9$ and $\C{S}_-\coloneqq\C{S}_1\cup\C{S}_2\cup\C{S}_5\cup\C{S}_6\cup\C{S}_7\cup\C{S}_{10}$ denote the parts of $D_{\epsilon}(\beta)$ that lie to the left and right of the curve $\gamma_{(\mu,\beta)}\cup\gamma_{\cut}$, respectively.
\subsubsection{First transformation}  \label{sec:beta1}

We define the function $g_{\beta}(\xi,k)$ for $k$ near $\beta$ by
\[
g_{\beta}(\xi,k)=\int_{\beta}^k\dd g=
\begin{cases}
g(\xi,k)-g_+(\xi,\beta),&k\in\C{S}_+,\\
g(\xi,k)-g_-(\xi,\beta),&k\in\C{S}_-,
\end{cases}\quad(x,t)\in\D{S}.
\]
For each $\xi$, $g_{\beta}(\xi,k)$ is an analytic function of $k\in D_{\epsilon}(\beta)\setminus\gamma_{\cut}$ with jump across $\gamma_{\cut}$.

Let
\[
m^{(\beta0)}(x,t,k)\coloneqq
\hat m^{(6)}(x,t,k)\eul^{-\ii\tilde h(\xi,k)\sigma_3}A(x,t,k),\quad k\in D_{\epsilon}(\beta)\setminus\C{Y},\ (x,t)\in\D{S},
\]
where $A(x,t,k)$ denotes the sectionally holomorphic function
\begin{equation}  \label{eq:A}
A(x,t,k)\coloneqq
\begin{cases}
(\delta^2\hat a\hat b)^{\frac{\sigma_3}{2}}\eul^{-\ii tg_+(\xi,\beta)\sigma_3},&k\in\C{S}_+,\\
(\delta^2\hat a\hat b)^{\frac{\sigma_3}{2}}\eul^{-\ii tg_-(\xi,\beta)\sigma_3},&k\in\C{S}_-,
\end{cases}\quad(x,t)\in\D{S}.
\end{equation}
Since the function $\delta^2\hat a\hat b$ is nonzero and analytic in $D_{\epsilon}(\beta)$, the square root $(\delta^2\hat a\hat b)^{\frac{1}{2}}$ is well-defined and analytic; the branch of $(\delta^2\hat a\hat b)^{\frac{1}{2}}$ can be chosen arbitrarily. 

Then (recall that $\hat a\hat a^*=aa^*$, $\hat b\hat b^*=bb^*$, and $\hat r\hat r^*=rr^*$)
\begin{alignat*}{2}
&v_{\C{Y}_1}^{(\beta0)}=\begin{pmatrix}1&0\\bb^*\tilde\delta^{-2}\eul^{2\ii tg_{\beta}}&1\end{pmatrix},&\quad&v_{\C{Y}_2}^{(\beta0)}=\begin{pmatrix}1&-\tilde\delta^2\eul^{-\ii t(g_{\beta+}+g_{\beta-})}\\0&1\end{pmatrix},\\
&v_{\C{Y}_3}^{(\beta0)}=\begin{pmatrix}1&0\\\frac{bb^*}{aa^*}\tilde\delta^{-2}\eul^{2\ii tg_{\beta}}&1\end{pmatrix},&&v_{\C{Y}_4}^{(\beta0)}=\begin{pmatrix}1&-\frac{1}{aa^*}\tilde\delta^2\eul^{-2\ii tg_{\beta}}\\0&1\end{pmatrix},\\
&v_{\C{Y}_5}^{(\beta0)}=v_{\C{Y}_6}^{(\beta0)}=v_{\C{Y}_7}^{(\beta0)}=v_{\C{Y}_8}^{(\beta0)}=v_{\C{Y}_9}^{(\beta0)}=\tilde\delta^{\sigma_3},&&\\
&v_{\C{Y}_{10}}^{(\beta0)}=\begin{pmatrix}1&0\\bb^*\eul^{2\ii tg_{\beta}}&1\end{pmatrix},&&v_{\C{Y}_{11}}^{(\beta0)}=\begin{pmatrix}1&-\eul^{-\ii t(g_{\beta+}+g_{\beta-})}\\0&1\end{pmatrix},\\
&v_{\C{Y}_{12}}^{(\beta0)}=\begin{pmatrix}1&0\\\frac{bb^*}{aa^*}\eul^{2\ii tg_{\beta}}&1\end{pmatrix},&&v_{\C{Y}_{13}}^{(\beta0)}=\begin{pmatrix}1&-\frac{1}{aa^*}\eul^{-2\ii tg_{\beta}}\\0&1\end{pmatrix},\\
&v_{\C{Y}_{14}}^{(\beta0)}=\eul^{-\ii tg_{\beta-}\sigma_3}\begin{pmatrix}1&-1\\0&1\end{pmatrix}\eul^{\ii tg_{\beta+}\sigma_3},&&v_{\C{Y}_{15}}^{(\beta0)}=\begin{pmatrix}\frac{1}{aa^*}&0\\0&aa^*\end{pmatrix},\\
&v_{\C{Y}_{16}}^{(\beta0)}=I,&&
\end{alignat*}
or, in terms of $r$,
\begin{alignat*}{2}
&v_{\C{Y}_1}^{(\beta0)}=\begin{pmatrix}1&0\\\frac{rr^*}{1+rr^*}\tilde\delta^{-2}\eul^{2\ii tg_{\beta}}&1\end{pmatrix},&\quad&v_{\C{Y}_2}^{(\beta0)}=\begin{pmatrix}1&-\tilde\delta^2\eul^{-\ii t(g_{\beta+}+g_{\beta-})}\\0&1\end{pmatrix},\\
&v_{\C{Y}_3}^{(\beta0)}=\begin{pmatrix}1&0\\rr^*\tilde\delta^{-2}\eul^{2\ii tg_{\beta}}&1\end{pmatrix},&&v_{\C{Y}_4}^{(\beta0)}=\begin{pmatrix}1&-(1+rr^*)\tilde\delta^2\eul^{-2\ii tg_{\beta}}\\0&1\end{pmatrix},\\
&v_{\C{Y}_5}^{(\beta0)}=v_{\C{Y}_6}^{(\beta0)}=v_{\C{Y}_7}^{(\beta0)}=v_{\C{Y}_8}^{(\beta0)}=v_{\C{Y}_9}^{(\beta0)}=\tilde\delta^{\sigma_3},&&\\
&v_{\C{Y}_{10}}^{(\beta0)}=\begin{pmatrix}1&0\\\frac{rr^*}{1+rr^*}\eul^{2\ii tg_{\beta}}&1\end{pmatrix},&&v_{\C{Y}_{11}}^{(\beta0)}=\begin{pmatrix}1&-\eul^{-\ii t(g_{\beta+}+g_{\beta-})}\\0&1\end{pmatrix},\\
&v_{\C{Y}_{12}}^{(\beta0)}=\begin{pmatrix}1&0\\rr^*\eul^{2\ii tg_{\beta}}&1\end{pmatrix},&&v_{\C{Y}_{13}}^{(\beta0)}=\begin{pmatrix}1&-(1+rr^*)\eul^{-2\ii tg_{\beta}}\\0&1\end{pmatrix},\\
&v_{\C{Y}_{14}}^{(\beta0)}=\eul^{-\ii tg_{\beta-}\sigma_3}\begin{pmatrix}1&-1\\0&1\end{pmatrix}\eul^{\ii tg_{\beta+}\sigma_3},&&v_{\C{Y}_{15}}^{(\beta0)}=\begin{pmatrix}1+rr^*&0\\0&\frac{1}{1+rr^*}\end{pmatrix},\\
&v_{\C{Y}_{16}}^{(\beta0)}=I.&&
\end{alignat*}
\subsubsection{Second transformation}  \label{sec:beta2}

We would next like to remove all jumps within $D_R(\beta)$. This is not quite possible, because $g_{\beta}$ has a jump across $\C{Y}_{11}\cup\C{Y}_{14}$. However, the following transformation removes all jumps within $D_R(\beta)$ except those across $\C{Y}_{11}$ and $\C{Y}_{14}$. The remaining jumps across $\C{Y}_{11}$ and $\C{Y}_{14}$ go away in the limit $\alpha\to\beta$.

Let $m^{(\beta1)}\coloneqq m^{(\beta0)}B$, where $B(x,t,k)$ denotes the sectionally holomorphic function
\begin{equation}   \label{eq:B}
B(x,t,k)\coloneqq\begin{cases}
\begin{pmatrix}1&0\\-\frac{rr^*}{1+rr^*}\eul^{2\ii tg_{\beta}}&1\end{pmatrix},&k\in\C{S}_7,\\
\begin{pmatrix}1&0\\-rr^*\eul^{2\ii tg_{\beta}}&1\end{pmatrix}\begin{pmatrix}\frac{1}{1+rr^*}&\eul^{-2\ii tg_{\beta}}\\0&1+rr^*\end{pmatrix},&k\in\C{S}_8,\\
\begin{pmatrix}\frac{1}{1+rr^*}&\eul^{-2\ii tg_{\beta}}\\0&1+rr^*\end{pmatrix},&k\in\C{S}_9,\\
\begin{pmatrix}1&(1+rr^*)\eul^{-2\ii tg_{\beta}}\\0&1\end{pmatrix},&k\in\C{S}_{10},\\
\;I,&\text{else},
\end{cases}\quad(x,t)\in\D{S}.
\end{equation}
Then
\begin{align*}
&v_{\C{Y}_1}^{(\beta1)}=\begin{pmatrix}1&0\\\frac{rr^*}{1+rr^*}\tilde\delta^{-2}\eul^{2\ii tg_{\beta}}&1\end{pmatrix},\qquad\qquad v_{\C{Y}_2}^{(\beta1)}=\begin{pmatrix}1&-\tilde\delta^2\eul^{-\ii t(g_{\beta+}+g_{\beta-})}\\0&1\end{pmatrix},\\
&v_{\C{Y}_3}^{(\beta1)}=\begin{pmatrix}1&0\\rr^*\tilde\delta^{-2}\eul^{2\ii tg_{\beta}}&1\end{pmatrix},\qquad\qquad\quad v_{\C{Y}_4}^{(\beta1)}=\begin{pmatrix}1&-(1+rr^*)\tilde\delta^2\eul^{-2\ii tg_{\beta}}\\0&1\end{pmatrix},\\
&v_{\C{Y}_5}^{(\beta1)}=\tilde\delta^{\sigma_3},\qquad\qquad\qquad\qquad\qquad\qquad v_{\C{Y}_6}^{(\beta1)}=\tilde\delta^{\sigma_3}\begin{pmatrix}1&0\\-\frac{rr^*}{1+rr^*}\eul^{2\ii tg_{\beta}}&1\end{pmatrix},\\
&v_{\C{Y}_7}^{(\beta1)}=\tilde\delta^{\sigma_3}\begin{pmatrix}\frac{1}{1+rr^*}&\eul^{-2\ii tg_{\beta}}\\-\frac{rr^*}{1+rr^*}\eul^{2\ii tg_{\beta}}&1\end{pmatrix},\quad v_{\C{Y}_8}^{(\beta1)}=\tilde\delta^{\sigma_3}\begin{pmatrix}\frac{1}{1+rr^*}&\eul^{-2\ii tg_{\beta}}\\0&1+rr^*\end{pmatrix},\\
&v_{\C{Y}_9}^{(\beta1)}=\tilde\delta^{\sigma_3}\begin{pmatrix}1&(1+rr^*)\eul^{-2\ii tg_{\beta}}\\0&1\end{pmatrix},\\
&v_{\C{Y}_{11}}^{(\beta1)}=\begin{pmatrix}\frac{1+rr^*\eul^{\ii t(g_{\beta+}-g_{\beta-})}}{1+rr^*}&\eul^{-2\ii tg_{\beta+}}(1-\eul^{\ii t(g_{\beta+}-g_{\beta-})})\\\frac{rr^*\eul^{2\ii tg_{\beta+}}(\eul^{-2\ii t(g_{\beta+}-g_{\beta-})}-1+rr^*(\eul^{-\ii t(g_{\beta+}-g_{\beta-})}-1))}{(1+rr^*)^2}&1-\frac{rr^*\eul^{-\ii t(g_{\beta+}-g_{\beta-})}(1-\eul^{-\ii t(g_{\beta+}-g_{\beta-})})}{1+rr^*}\end{pmatrix},\\
&v_{\C{Y}_{14}}^{(\beta1)}=\begin{pmatrix}\eul^{\ii t(g_{\beta+}-g_{\beta-})}&0\\0&\eul^{-\ii t(g_{\beta+}-g_{\beta-})}\end{pmatrix},\qquad v_{\C{Y}_{10}}^{(\beta1)}=v_{\C{Y}_{12}}^{(\beta1)}=v_{\C{Y}_{13}}^{(\beta1)}=v_{\C{Y}_{15}}^{(\beta1)}=v_{\C{Y}_{16}}^{(\beta1)}=I.
\end{align*}

We note that in the limit as $\alpha$ approaches $\beta$, we have $g_{\beta+}-g_{\beta-}\to 0$ on $\C{Y}_{11}$ and $\C{Y}_{14}$. Hence the jumps across $\C{Y}_{11}$ and $\C{Y}_{14}$ vanish in this limit.
\subsubsection{Analytic approximation of $g_{\beta}$}  \label{sec:bapprox}

We want to map the RH problem for $m^{(\beta1)}$ to an RH problem which, in the large $t$ limit, can be approximated by the exactly solvable RH problem of Appendix~\ref{sec:app}. Thus, we want to introduce new variables $\lambda\equiv\lambda(x,t,k)$ and $y\equiv y(x,t)$ such that
\begin{equation}  \label{eq:ylambda}
2\ii tg_{\beta}(k)\approx-\lambda^2-2y\lambda.
\end{equation}
For each $(x,t)\in\D{S}$, we would like the map $k\mapsto\lambda$ to be a biholomorphism from $D_{\epsilon}(\beta)$ to a neighborhood of $0$ in the complex $\lambda$-plane. However, $g_{\beta}$ is not analytic at $k=\beta$. Our next goal is to prove Lemma~\ref{lem:bapprox}, which overcomes this problem by showing that $g_{\beta}$ can be well-approximated in $D_{\epsilon}(\beta)\setminus D_R(\beta)$ by an analytic function.

In this subsection, we will come across square roots of the form $\sqrt{s-k}$ with $k\in\gamma_{(\beta,\alpha)}$. For $k\in\gamma_{(\beta,\alpha)}$, let $\gamma_{(k,\infty)}$ denote the part of $\gamma_{\cut}$ that stretches from $k$ to $\infty$. Given $k\in\gamma_{(\beta,\alpha)}$, we then fix the branch of the square root $\sqrt{s-k}$ (or, more generally, of the complex power $(s-k)^a$) so that $\sqrt{s-k}$ is an analytic function of $s\in\D{C}\setminus\gamma_{(k,\infty)}$ and $\sqrt{s-k}\sim\sqrt{s}$ as $s\to+\infty$. In particular, the functions $\sqrt{s-\beta}$ and $\sqrt{s-\alpha}$ are analytic for $s\in\D{C}\setminus\gamma_{\cut}$ and $s\in\D{C}\setminus\gamma_{(\alpha,\infty)}$, respectively.

We define the function $g_1(s)\equiv g_1(\xi,s)$ by 
\[
g_1(s)\coloneqq\frac{g'(s)}{\sqrt{s-\beta}\sqrt{s-\alpha}},\quad s\in D_{\epsilon}(\beta)\setminus\gamma_{\cut}.
\]
The definition of $g(s)$ implies that, for each $\xi$, $g_1(\xi,s)$ is an analytic function of $s\in D_{\epsilon}(\beta)$. Moreover,
\begin{equation}  \label{eq:g1-estims}
\sup_{(x,t)\in\D{S}}\sup_{s\in D_{\epsilon}(\beta)}\left\lvert\frac{\partial^ng_1}{\partial s^n}(\xi,s)\right\rvert<\infty,\quad n=0,1,2,\dots
\end{equation}

\begin{lemma} \label{lem:bapprox}
We have
\begin{align*}
g_{\beta}(k)&=\int_{\beta}^kg_1(\xi,s)\left(s-\beta+\frac{\beta-\alpha}{2}\right)\dd s+\ord\left(\abs{\alpha-\beta}^2\ln\abs{\alpha-\beta}\right),\quad\ t\to+\infty,\\
&\qquad\qquad\qquad\qquad\qquad\qquad\quad k\in D_{\epsilon}(\beta)\setminus(D_R(\beta)\cup\gamma_{\cut}),\ (x,t)\in\D{S},
\end{align*}
where the error term is uniform with respect to $k$ and $x$ in the given ranges.
\end{lemma}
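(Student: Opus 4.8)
The plan is to start from $g_{\beta}(k)=\int_{\beta}^{k}g'(s)\,\dd s$ together with the factorization $g'(s)=g_1(\xi,s)\sqrt{s-\beta}\sqrt{s-\alpha}$ on $D_{\epsilon}(\beta)$, where $g_1$ is analytic on $D_{\epsilon}(\beta)$ and uniformly bounded there with all its derivatives by \eqref{eq:g1-estims}, and the branch of $\sqrt{s-\beta}\sqrt{s-\alpha}$ is the one described above (analytic on $D_{\epsilon}(\beta)\setminus\gamma_{(\beta,\alpha)}$, $\sim s$ at infinity). The key algebraic step is the elementary expansion, obtained by writing
\[
\sqrt{s-\beta}\sqrt{s-\alpha}=\sqrt{(s-\beta)^2-(\alpha-\beta)(s-\beta)}=(s-\beta)\sqrt{1-\tfrac{\alpha-\beta}{s-\beta}},
\]
and expanding the last square root (this is the branch equal to $1$ at $s=\infty$). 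Setting $\rho(\xi,s)\coloneqq\sqrt{s-\beta}\sqrt{s-\alpha}-(s-\beta)-\frac{\beta-\alpha}{2}$, substitution into $g_{\beta}(k)=\int_{\beta}^{k}g_1(\xi,s)\bigl[(s-\beta)+\frac{\beta-\alpha}{2}+\rho(\xi,s)\bigr]\dd s$ isolates the claimed main term $\int_{\beta}^{k}g_1(\xi,s)\bigl(s-\beta+\frac{\beta-\alpha}{2}\bigr)\dd s$, whose integrand is analytic on all of $D_{\epsilon}(\beta)$, and leaves the error $\int_{\beta}^{k}g_1(\xi,s)\rho(\xi,s)\,\dd s$.

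The crux is the pointwise bound
\[
\abs{\rho(\xi,s)}\leq C\,\frac{\abs{\alpha-\beta}^2}{\max(\abs{s-\beta},\abs{\alpha-\beta})},\qquad s\in D_{\epsilon}(\beta),\ (x,t)\in\D{S},
\]
with $C$ absolute. For $\abs{s-\beta}\geq 2\abs{\alpha-\beta}$ (so $u\coloneqq\frac{\alpha-\beta}{s-\beta}$ has $\abs{u}\leq\frac12$ and the expansion applies on the principal branch) one uses the Taylor remainder estimate $\abs{\sqrt{1-u}-1+\frac u2}\leq C\abs{u}^2$ to get $\abs{\rho(\xi,s)}\leq C\abs{\alpha-\beta}^2/\abs{s-\beta}$; for $\abs{s-\beta}\leq 2\abs{\alpha-\beta}$ one bounds crudely $\abs{\sqrt{s-\beta}\sqrt{s-\alpha}}\leq\sqrt{\abs{s-\beta}\,\abs{s-\alpha}}\leq C\abs{\alpha-\beta}$ together with $\abs{s-\beta}\leq 2\abs{\alpha-\beta}$, giving $\abs{\rho(\xi,s)}\leq C\abs{\alpha-\beta}=C\abs{\alpha-\beta}^2/\abs{\alpha-\beta}$. (In the second regime the true branch cut $\gamma_{(\beta,\alpha)}$ and the cut of the factorized form may differ, but both lie inside $D_R(\beta)$, where only the crude bound is used, so this is harmless.) The point of this ``saturated'' bound is that the weaker estimate $\ord(\abs{\alpha-\beta}^2/\abs{s-\beta})$ alone is useless near $\beta$; here one uses that $\abs{\alpha-\beta}\leq R/2\ll R$ on $\D{S}$ (see \eqref{eq:R}), which both makes the two regimes overlap cleanly and keeps $\rho$ integrable at $\beta$.

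To estimate the error we integrate along a suitable path from $\beta$ to $k$: for $k\in D_{\epsilon}(\beta)\setminus(D_R(\beta)\cup\gamma_{\cut})$ we pick a path lying in the connected region ($\C{S}_+$ or $\C{S}_-$) that contains $k$ — so that $\int_{\beta}^{k}g'\,\dd s$ indeed reproduces $g_{\beta}(k)=g(k)-g_{\pm}(\xi,\beta)$ — consisting of a radial segment from $\beta$ out to distance $\abs{k-\beta}$ (chosen not to run along $\gamma_{(\beta,\alpha)}$) followed by a circular arc of radius $\abs{k-\beta}$ to $k$; such a path has length $\leq C\epsilon$ and one may arrange $\abs{s-\beta}$ to be non-decreasing along the radial part. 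Using $\abs{g_1}\leq C$ and the $\rho$-bound, the radial contribution is $\leq C\abs{\alpha-\beta}^2\int_{0}^{\abs{k-\beta}}\frac{\dd\rho}{\max(\rho,\abs{\alpha-\beta})}\leq C\abs{\alpha-\beta}^2\bigl(1+\ln\frac{\epsilon}{\abs{\alpha-\beta}}\bigr)$, while the arc, of length $\leq 2\pi\abs{k-\beta}$ at distance $\abs{k-\beta}\geq R\geq\abs{\alpha-\beta}$ from $\beta$, contributes $\leq 2\pi\abs{k-\beta}\cdot C\frac{\abs{\alpha-\beta}^2}{\abs{k-\beta}}=C\abs{\alpha-\beta}^2$. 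Since $\abs{\alpha-\beta}\to 0$ uniformly on $\D{S}$ (and $\epsilon$ is fixed), $1+\ln\frac{\epsilon}{\abs{\alpha-\beta}}\leq C\abs{\ln\abs{\alpha-\beta}}$ for $t$ large, so the error is $\ord(\abs{\alpha-\beta}^2\ln\abs{\alpha-\beta})$ uniformly with respect to $k$ and $x$, as claimed.

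The main obstacle is the $\rho$-estimate near $\beta$: the observation that $\rho$ is only $\ord(\abs{\alpha-\beta})$ there (rather than $\ord(\abs{\alpha-\beta}^2/\abs{s-\beta})$, which diverges), so that the error integral over $D_R(\beta)$ does not produce a spurious term of size $R^2\sim t^{-1}$, far larger than the target $\ord(\abs{\alpha-\beta}^2\ln\abs{\alpha-\beta})$. The remaining ingredients are bookkeeping: matching the branches of the square roots to the conventions, checking the analyticity of $g_1$ and of the main-term integrand on $D_{\epsilon}(\beta)$, and choosing the integration path compatibly with the piecewise definition of $g_{\beta}$ (not crossing $\gamma_{(\mu,\beta)}\cup\gamma_{\cut}$), which relies only on the smoothness of the contours near $\beta$.
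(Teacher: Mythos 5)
Your argument is correct and reaches the same estimate, but the technical heart is different from the paper's. The paper expands $\sqrt{s-\alpha}$ about $\alpha=\beta$ using the \emph{integral-form} Taylor remainder, $\sqrt{s-\alpha}=\sqrt{s-\beta}+\frac{\beta-\alpha}{2\sqrt{s-\beta}}+\int_{\gamma_{(\beta,\alpha)}}\frac{u-\alpha}{4(s-u)^{3/2}}\dd u$, so the error $E(\xi,k)$ becomes an iterated double integral over $(s,u)$, which is then estimated directly (and the logarithm emerges from the inner $v$-integral). You instead factor $\sqrt{s-\beta}\sqrt{s-\alpha}=(s-\beta)\sqrt{1-\tfrac{\alpha-\beta}{s-\beta}}$ and use a Lagrange-type pointwise bound on the scalar remainder $\rho(\xi,s)$, which reduces everything to a single integral. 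The observation that makes this work is the ``saturated'' estimate $\abs{\rho}\leq C\abs{\alpha-\beta}^2/\max(\abs{s-\beta},\abs{\alpha-\beta})$, obtained by splitting $\abs{s-\beta}\gtrless 2\abs{\alpha-\beta}$, which keeps the integrand bounded through $s=\beta$ and thereby avoids the spurious $\ord(R^2)$ one would get from the naive $\ord(\abs{\alpha-\beta}^2/\abs{s-\beta})$ alone. Both proofs then integrate along the same type of contour (radial segment plus circular arc), and the logarithm appears in your version from $\int_0^{\abs{k-\beta}}\dd\rho/\max(\rho,\abs{\alpha-\beta})$. Your branch-matching remark is correct and worth making explicit: in $\abs{s-\beta}\le 2\abs{\alpha-\beta}$ only the modulus $\abs{\sqrt{s-\beta}\sqrt{s-\alpha}}$, which is branch-independent, is used, so the discrepancy between the paper's cut $\gamma_{(\beta,\alpha)}$ and the straight-segment cut of the factored form is harmless. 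One small precision you should add when choosing the contour: the paper's radial segment is taken in the specific direction $\beta-\alpha$ (pointing away from $\alpha$) and the arc is a sub-arc of $\partial D_{\abs{k-\beta}}(\beta)\setminus\gamma_{\cut}$, which guarantees the whole path stays in the region where $g_\beta$ is given by the single expression $g(k)-g_\pm(\beta)$ and also makes the comparison $\abs{s-\beta}\leq C\abs{s-u}$ evident; your ``chosen not to run along $\gamma_{(\beta,\alpha)}$'' should be sharpened to ``not crossing $\gamma_{\cut}$ (nor $\gamma_{(\mu,\beta)}$) and starting on the same side of $\gamma_{\cut}$ as $k$.''
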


\begin{proof}
For each $s\in D_{\epsilon}(\beta)\setminus\gamma_{\cut}$, $\sqrt{s-k}$ is a smooth function of $k\in\gamma_{(\beta,\alpha)}$. Hence integration by parts gives the following Taylor expansion of $\sqrt{s-\alpha}$ as $\alpha\to\beta$:
\begin{equation}   \label{eq:sqrt-taylor}
\sqrt{s-\alpha}=\sqrt{s-\beta}+\frac{\beta-\alpha}{2\sqrt{s-\beta}}+\int_{\gamma_{(\beta,\alpha)}}\frac{u-\alpha}{4(s-u)^{3/2}}\dd u,\quad s\in D_{\epsilon}(\beta)\setminus\gamma_{\cut}.
\end{equation}
Using \eqref{eq:sqrt-taylor} we can write
\begin{align*}
g_{\beta}(k)&=\int_{\beta}^k\dd g=\int_{\beta}^kg_1(\xi,s)\sqrt{s-\alpha}\sqrt{s-\beta}\,\dd s\\
&=\int_{\beta}^kg_1(\xi,s)\left(s-\beta+\frac{\beta-\alpha}{2}\right)\dd s+E(\xi,k),\quad k\in D_{\epsilon}(\beta)\setminus(D_R(\beta)\cup\gamma_{\cut}),
\end{align*}
where the error term $E(\xi,k)$ is given by
\begin{equation}   \label{eq:error}
E(\xi,k)=\int_{\beta}^kg_1(\xi,s)\left(\int_{\gamma_{(\beta,\alpha)}}\frac{u-\alpha}{4(s-u)^{3/2}}\dd u\right)\sqrt{s-\beta}\,\dd s.
\end{equation}
Given $k\in D_{\epsilon}(\beta)\setminus(D_R(\beta)\cup\gamma_{\cut})$, we let the contour from $\beta$ to $k$ in \eqref{eq:error} consist of the straight segment from $\beta$ to $\beta+\frac{\abs{k-\beta}}{\abs{\alpha-\beta}}(\beta-\alpha)$ followed by an arc of the cut circle $\partial D_{\abs{k-\beta}}(\beta)\setminus\gamma_{\cut}$, see Figure~\ref{fig:contour-beta-k}.
\begin{figure}[ht]
\centering\includegraphics[scale=.6]{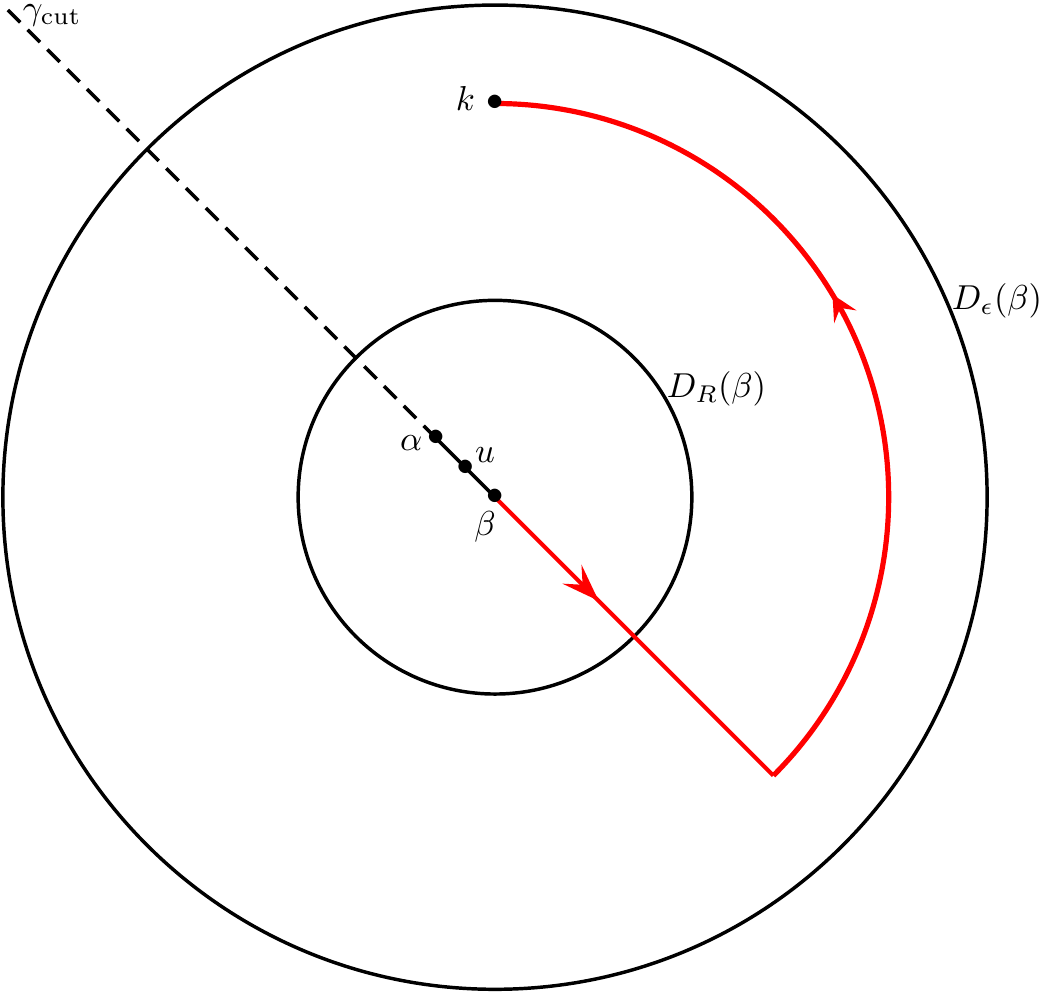}
\caption{The integration contour from $\beta$ to $k$ in equation \eqref{eq:error}} 
\label{fig:contour-beta-k}
\end{figure}

Increasing $T$ in \eqref{eq:wedge} if necessary, we may assume that $t$ is so large that $\gamma_{(\beta,\alpha)}$ deviates only slightly from the straight line segment $\croch{\alpha,\beta}$ from $\alpha$ to $\beta$. Then $\abs{s-\beta}\leq C\abs{s-u}$ for all $s$, $u$ on the given contours and for all $(x,t)\in\D{S}$. Hence, using \eqref{eq:g1-estims}, we can estimate
\begin{align*}
\abs{E(\xi,k)}
&\leq C\int_{\beta}^k\int_{\gamma_{(\beta,\alpha)}}\frac{\abs{u-\alpha}}{\abs{s-u}}\abs{\dd u}\abs{\dd s}\\
&\leq C\int_{\beta}^k\int_0^{\abs{\alpha-\beta}}\frac{\abs{\alpha-\beta}-v}{v+\abs{s-\beta}}\dd v\abs{\dd s}\\
&\leq C\int_0^{\abs{k-\beta}}\int_0^{\abs{\alpha-\beta}}\frac{\abs{\alpha-\beta}-v}{v+r}\dd v\dd r+2\pi C\abs{k-\beta}\int_0^{\abs{\alpha-\beta}}\frac{\abs{\alpha-\beta}-v}{v+\abs{k-\beta}}\dd v\\
&\leq C\int_0^{\abs{\alpha-\beta}}(\abs{\alpha-\beta}-v)\ln\left(\frac{v+\abs{k-\beta}}{v}\right)\dd v+C\int_0^{\abs{\alpha-\beta}}(\abs{\alpha-\beta}-v)\dd v\\
&\leq C\int_0^{\abs{\alpha-\beta}}(\abs{\alpha-\beta}-v)(1+\ln\abs{v})\dd v+C\abs{\alpha-\beta}^2\\
&\leq C\abs{\alpha-\beta}^2\abs{\ln\abs{\alpha-\beta}},\qquad k\in D_{\epsilon}(\beta)\setminus(D_R(\beta)\cup\gamma_{\cut}),\ (x,t)\in\D{S}.
\end{align*}
This completes the proof.
\end{proof}

\subsubsection{A local change of variables}  \label{sec:bchange}

Taylor expanding $g_1(\xi,s)$ around $s=\beta$ and using
\eqref{eq:g1-estims}, we find
\[
\abs{g_1(\xi,s)-g_1(\xi,\beta)-g_{1s}(\xi,\beta)(s-\beta)}\leq C\abs{s-\beta}^2,\quad s\in D_{\epsilon}(\beta),\ (x,t)\in\D{S},
\]
and hence
\begin{align*}
&\left\lvert\int_{\beta}^kg_1(\xi,s)\left(s-\beta+\frac{\beta-\alpha}{2}\right)\dd s-\left\lbrace g_1(\xi,\beta)\frac{\beta-\alpha}{2}(k-\beta)\right.\right.\\
&\qquad\qquad\left.\left.+\left(g_1(\xi,\beta)+g_{1s}(\xi,\beta)\frac{\beta-\alpha}{2}\right)\frac{(k-\beta)^2}{2}\right\rbrace\right\rvert\leq C(k-\beta)^3,\quad k\in D_{\epsilon}(\beta),\ (x,t)\in\D{S}.
\end{align*}
It follows that if we define the functions $\lambda\equiv\lambda(x,t,k)$ and $y\equiv y(x,t)$ by
\begin{align}   \label{eq:lambda}
\lambda&\coloneqq\sqrt{t}\,\psi_{\beta}(\xi)(k-\beta),\\
\label{eq:y}
y&\coloneqq-\sqrt{t}\,\frac{\ii g_1(\xi,\beta)(\beta-\alpha)}{2\psi_{\beta}(\xi)},
\end{align}
where
\begin{equation}   \label{eq:psibeta}
\psi_{\beta}(\xi)\coloneqq\eul^{-\frac{\ii\pi}{4}}\sqrt{g_1(\xi,\beta)+g_{1s}(\xi,\beta)\frac{\beta-\alpha}{2}},
\end{equation}
then
\begin{align}   \label{eq:g1estim}
&\Biggl\lvert 2\ii t\int_{\beta}^kg_1(\xi,s)\left(s-\beta+\frac{\beta-\alpha}{2}\right)\dd s-(-\lambda^2-2y\lambda)\Biggr\rvert\leq Ct\abs{k-\beta}^3,\notag\\
&\qquad\qquad\qquad\qquad\qquad\qquad\qquad\qquad\qquad\quad\ k\in D_{\epsilon}(\beta),\ (x,t)\in\D{S}.
\end{align}

For each $(x,t)\in\D{S}$, the map $k\mapsto\lambda$ is a biholomorphism of the disk $D_{\epsilon}(\beta)$ onto the open disk of radius $\sqrt{t}\abs{\psi_{\beta}(\xi)}\epsilon$ centered at the origin. By deforming the contour $\Sigma^{(6)}$ slightly and fixing the branch of the square root in \eqref{eq:psibeta} appropriately, we may assume that $\C{Y}_j$ is mapped into the ray $Y_j$ defined in \eqref{eq:rays} for $j=1,\dots,4$. Furthermore, by choosing 
\[
c(\xi)=\abs{\psi_{\beta}(\xi)}^{-1}
\]
in the definition \eqref{eq:R} of $R(\xi,t)$, we can arrange so that the circle $\partial D_R(\beta)$ is mapped onto the unit circle. Finally, we choose the branch cut $\C{C}$ in Figure~\ref{fig:raysY} so that $\C{Y}_{15}$ and $\C{Y}_{16}$ are mapped into $\C{C}$.

\begin{remark}   \label{rem:psibeta}
For $\xi=\xi_0=0$ we have
\[
\psi_{\beta}(0)=\eul^{-\frac{\ii\pi}{4}}\sqrt{g_1(0,\beta(0))},
\]
and, by symmetry, $g_1(0,\beta(0))>0$. So in this case we can fix the branch of the square root in \eqref{eq:psibeta} by requiring that 
\[
\arg\psi_{\beta}(\xi)\in\left(-\frac{\pi}{2},0\right)
\]
for all small enough $\xi$.
\end{remark}

\begin{lemma}   \label{lem:gbestims}
We have
\begin{align}   \label{eq:gbestim1}
2\ii tg_{\beta}(\xi,k)&=-\lambda^2-2y\lambda+\ord(t\abs{\alpha-\beta}^2\abs{\ln\abs{\alpha-\beta}})+\ord(t\abs{k-\beta}^3),\notag\\
&\qquad\qquad t\to+\infty,\ k\in D_{\epsilon}(\beta)\setminus(D_R(\beta)\cup\gamma_{\cut}),\ (x,t)\in\D{S},
\end{align}
and
\begin{equation}  \label{eq:gbestim2}
\abs{g_{\beta+}(\xi,k)-g_{\beta-}(\xi,k)}=\ord(\abs{\alpha-\beta}^2),\quad t\to+\infty,\ k\in\gamma_{\cut},\ (x,t)\in\D{S},
\end{equation}
where the error terms are uniform with respect to $k$ and $x$ in the given ranges.
\end{lemma}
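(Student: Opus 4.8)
### Proof proposal for Lemma~\ref{lem:gbestims}

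\textbf{The plan is to} obtain \eqref{eq:gbestim1} by chaining together the two approximations already established. First I would write
\[
2\ii tg_{\beta}(\xi,k)=2\ii t\int_{\beta}^kg_1(\xi,s)\left(s-\beta+\frac{\beta-\alpha}{2}\right)\dd s+2\ii t\,E(\xi,k),
\]
using Lemma~\ref{lem:bapprox} to control $E(\xi,k)$. Since Lemma~\ref{lem:bapprox} gives $E(\xi,k)=\ord(\abs{\alpha-\beta}^2\abs{\ln\abs{\alpha-\beta}})$ uniformly in $k\in D_{\epsilon}(\beta)\setminus(D_R(\beta)\cup\gamma_{\cut})$ and $x$, multiplying by $2\ii t$ produces exactly the first error term $\ord(t\abs{\alpha-\beta}^2\abs{\ln\abs{\alpha-\beta}})$. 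Then I would apply estimate \eqref{eq:g1estim} from Section~\ref{sec:bchange}, which says precisely that
\[
\Bigl\lvert 2\ii t\int_{\beta}^kg_1(\xi,s)\left(s-\beta+\tfrac{\beta-\alpha}{2}\right)\dd s-(-\lambda^2-2y\lambda)\Bigr\rvert\leq Ct\abs{k-\beta}^3,
\]
with $\lambda$, $y$, $\psi_{\beta}$ as defined in \eqref{eq:lambda}--\eqref{eq:psibeta}. Adding the two error bounds via the triangle inequality yields \eqref{eq:gbestim1}. I should note that \eqref{eq:g1estim} is stated for all $k\in D_{\epsilon}(\beta)$ while Lemma~\ref{lem:bapprox} requires $k\notin D_R(\beta)\cup\gamma_{\cut}$, so the combined statement inherits the more restrictive domain $D_{\epsilon}(\beta)\setminus(D_R(\beta)\cup\gamma_{\cut})$, exactly as claimed.

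\textbf{For the second estimate} \eqref{eq:gbestim2}, I would argue directly from the definition $g_{\beta}(k)=\int_{\beta}^k\dd g$, noting that the jump $g_{\beta+}-g_{\beta-}$ across $\gamma_{\cut}$ comes only from the branch cut of $g$ along $\gamma_{(\beta,\alpha)}$ (on $\gamma_{(\alpha,\infty)}$ the function $\dd g$ has no cut in $D_{\epsilon}(\beta)$, or the contribution is accounted for consistently). For $k\in\gamma_{(\beta,\alpha)}$ the difference of boundary values equals $\oint\dd g$ around a small loop enclosing the portion of $[\beta,\alpha]$ from $\beta$ to $k$, which by the explicit formula $\dd g=g_1(\xi,s)\sqrt{s-\beta}\sqrt{s-\alpha}\,\dd s$ and the bound \eqref{eq:g1-estims} on $g_1$ is $\ord$ of $\int_{\beta}^{\alpha}\abs{\sqrt{s-\beta}\sqrt{s-\alpha}}\,\abs{\dd s}$. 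Parametrizing the segment $[\beta,\alpha]$ by $s=\beta+v(\alpha-\beta)/\abs{\alpha-\beta}$ with $v\in[0,\abs{\alpha-\beta}]$, the integrand is $\ord(\sqrt{v}\sqrt{\abs{\alpha-\beta}-v})$, and
\[
\int_0^{\abs{\alpha-\beta}}\sqrt{v}\sqrt{\abs{\alpha-\beta}-v}\,\dd v=\ord(\abs{\alpha-\beta}^2),
\]
giving \eqref{eq:gbestim2}. The same bound holds on $\gamma_{(\alpha,\infty)}$ since there the two boundary values of $g$ differ by a full period contribution $\oint_{[\beta,\alpha]}\dd g$, which is likewise $\ord(\abs{\alpha-\beta}^2)$ by the same computation.

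\textbf{The main obstacle} I anticipate is bookkeeping rather than analysis: making sure the branch choices for $\sqrt{s-\beta}$ and $\sqrt{s-\alpha}$ fixed in Section~\ref{sec:bapprox}, together with the partition $\C{S}_+/\C{S}_-$ defining $g_{\beta}$, are used consistently so that the ``jump of $g_{\beta}$'' computed in the proof of \eqref{eq:gbestim2} is genuinely the quantity appearing in $v_{\C{Y}_{11}}^{(\beta1)}$ and $v_{\C{Y}_{14}}^{(\beta1)}$, and so that the error terms in \eqref{eq:gbestim1} really are uniform in $x$ over all of $\D{S}$ — which hinges entirely on the uniformity already built into \eqref{eq:g1-estims}, \eqref{eq:g1estim}, and Lemma~\ref{lem:bapprox}. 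Once those are in place the two estimates follow immediately by adding error bounds, and no new estimation is required.
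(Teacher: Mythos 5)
Your proposal is correct and follows essentially the same route as the paper's own proof. For \eqref{eq:gbestim1} the paper likewise simply cites Lemma~\ref{lem:bapprox} together with \eqref{eq:g1estim} and adds the error bounds, and for \eqref{eq:gbestim2} the paper computes $g_{\beta+}-g_{\beta-}=2\int_{\beta}^k g_1(\xi,s)\left(\sqrt{s-\alpha}\sqrt{s-\beta}\right)_+\dd s$ on $\gamma_{(\beta,\alpha)}$, bounds it by $C\int_0^{\abs{\alpha-\beta}}\sqrt{u}\sqrt{\abs{\alpha-\beta}-u}\,\dd u=C\pi\abs{\alpha-\beta}^2/8$, and then observes that the jump is constant on $\gamma_{(\alpha,\infty)}$ (your ``full period'' remark is exactly this observation, since $\sqrt{s-\alpha}\sqrt{s-\beta}$, and hence $\dd g/\dd k$, is continuous across $\gamma_{(\alpha,\infty)}$ near $\beta$).
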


\begin{proof}
The estimate \eqref{eq:gbestim1} follows immediately from Lemma~\ref{lem:bapprox} and \eqref{eq:g1estim}.

In order to prove \eqref{eq:gbestim2}, we note that
\begin{align*}
g_{\beta+}(\xi,k)-g_{\beta-}(\xi,k)
&=\int_{\beta}^k\left(\Bigl(\frac{\dd g}{\dd k}\Bigr)_+\!(\xi,s)-\Bigl(\frac{\dd g}{\dd k}\Bigr)_-\!(\xi,s)\right)\dd s\\
&=\int_{\beta}^kg_1(\xi,s)\left(\left(\sqrt{s-\alpha}\sqrt{s-\beta}\right)_+-\left(\sqrt{s-\alpha}\sqrt{s-\beta}\right)_-\right)\dd s\\
&=2\int_{\beta}^kg_1(\xi,s)\left(\sqrt{s-\alpha}\sqrt{s-\beta}\right)_+\dd s,\quad k\in\gamma_{(\beta,\alpha)},\ (x,t)\in\D{S},
\end{align*}
where the integration contour follows the curve $\gamma_{(\beta,\alpha)}$ from $\beta$ to $k$. Hence, by \eqref{eq:g1-estims},
\begin{align*}
\abs{g_{\beta+}(\xi,k)-g_{\beta-}(\xi,k)}
&\leq C\int_0^{\abs{k-\beta}}\sqrt{\abs{\alpha-\beta}-u}\,\sqrt{u}\,\dd u\\
&\leq C\int_0^{\abs{\alpha-\beta}}\sqrt{\abs{\alpha-\beta}-u}\,\sqrt{u}\,\dd u\\
&=C\frac{\pi\abs{\alpha-\beta}^2}{8}\,,\quad k\in\gamma_{(\beta,\alpha)},\ (x,t)\in\D{S}.
\end{align*}
This establishes \eqref{eq:gbestim2} for $k\in\gamma_{(\beta,\alpha)}$. Since
\[
g_{\beta+}(\xi,k)-g_{\beta-}(\xi,k)=g_{\beta+}(\xi,\alpha)-g_{\beta-}(\xi,\alpha),\quad k\in\gamma_{(\alpha,\infty)},\ (x,t)\in\D{S},
\]
the estimate \eqref{eq:gbestim2} holds also for $k\in\gamma_{(\alpha,\infty)}$.
\end{proof}

\subsubsection{Behavior of $\tilde\delta$ as $k\to\beta$}

We next consider the behavior of $\tilde\delta$ as $k$ approaches $\beta$. Let $\ln_{\mu}(k-\beta)$ denote the function $\ln(k-\beta)$ with branch cut along $\gamma_{(\mu,\beta)}$. More precisely,
\[
\ln_{\mu}(k-\beta)=\ln(k-\beta),\quad k\in D_{\epsilon}(\beta)\setminus\gamma_{(\mu,\beta)},
\]
where the branch is fixed by requiring that
\begin{equation}   \label{eq:lnmu}
\ln_{\C{C}}(\lambda)=\ln_{\mu}(k-\beta)+\ln(\sqrt{t}\psi_{\beta}(\xi)),\quad k\in D_{\epsilon}(\beta)\setminus\gamma_{(\mu,\beta)},\ (x,t)\in\D{S},
\end{equation}
where $\ln_{\C{C}}(\lambda)$ denotes the logarithm of $\lambda$ with cut along $\C{C}$, see Appendix~\ref{sec:app}.

We also define the function $L_{\mu}$ by
\[
L_{\mu}(s,k)=\ln(k-s),\quad s\in\gamma_{(\mu,\beta)},\ k\in D_{\epsilon}(\beta)\setminus\gamma_{(\mu,\beta)},
\]
where the branch is fixed by requiring that:
\begin{enumerate}[(i)]
\item
$L_{\mu}(s,k)$ is a continuous function of $s\in\gamma_{(\mu,\beta)}$ for each $k\in D_{\epsilon}(\beta)\setminus\gamma_{(\mu,\beta)}$.
\item
For $s=\beta$, we have $L_{\mu}(\beta,k)=\ln_{\mu}(k-\beta)$. 
\end{enumerate}
Integrating by parts, we find
\begin{align*}
\int_{\gamma_{(\mu,\beta)}}\frac{\ln(1+r(s)r^*(s))}{s-k}\,\dd s
&=\ln_{\mu}(k-\beta)\ln(1+r(\beta)r^*(\beta))-L_{\mu}(\mu,k)\ln(1+\abs{r(\mu)}^2)\\
&\quad-\int_{\gamma_{(\mu,\beta)}}L_{\mu}(s,k)\,\dd\ln(1+r(s)r^*(s)).
\end{align*}
Hence we can write
\begin{equation}   \label{eq:tdeltanu}
\tilde\delta(\xi,k)=\eul^{-\ii\tilde\nu(\xi)\ln_{\mu}(k-\beta)+\tilde\chi(\xi,k)},\quad k\in\D{C}\setminus\gamma_{(\bar\beta,\beta)},\ (x,t)\in\D{S},
\end{equation}
where $\tilde\nu(\xi)$ is defined in \eqref{eq:nutilde} and the function $\tilde\chi(\xi,k)$ is defined by 
\begin{align}    \label{eq:chitilde}
\tilde\chi(\xi,k)&\coloneqq-\frac{1}{2\pi\ii}L_{\mu}(\mu,k)\ln(1+\abs{r(\mu)}^2)-\frac{1}{2\pi\ii}\int_{\gamma_{(\mu,\beta)}}L_{\mu}(s,k)\,\dd\ln(1+r(s)r^*(s))\notag\\
&\quad-\frac{1}{2\pi\ii}\int_{\gamma_{(\bar\beta,\mu)}}\frac{\ln(1+r(s)r^*(s))}{s-k}\,\dd s,\qquad\qquad k\in\D{C}\setminus\gamma_{(\bar\beta,\beta)},\ (x,t)\in\D{S}.
\end{align}

\begin{lemma}   \label{lem:tdelta}
We have
\[
\tilde\delta(\xi,k)=\lambda^{-\ii\tilde\nu(\xi)}t^{\frac{\ii\tilde\nu(\xi)}{2}}\delta_0(\xi)\delta_1(\xi,k),\quad k\in D_{\epsilon}(\beta)\setminus\gamma_{(\mu,\beta)},\ (x,t)\in\D{S},
\]
where the functions $\delta_0$ and $\delta_1$ are defined by 
\begin{alignat*}{3}
\delta_0(\xi)&\coloneqq\eul^{\ii\tilde\nu(\xi)\ln\psi_{\beta}(\xi)}\eul^{\tilde\chi(\xi,\beta)},&\quad&(x,t)\in\D{S},&&\\
\delta_1(\xi,k)&\coloneqq\eul^{\tilde\chi(\xi,k)-\tilde\chi(\xi,\beta)},&&(x,t)\in\D{S},&\quad&k\in D_{\epsilon}(\beta)\setminus\gamma_{(\mu,\beta)}.
\end{alignat*} 
\end{lemma}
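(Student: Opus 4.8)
\emph{Proof proposal.} The plan is to derive the stated factorization by a direct substitution, starting from the representation \eqref{eq:tdeltanu} of $\tilde\delta$ in terms of $\ln_{\mu}(k-\beta)$ and $\tilde\chi$, and trading $\ln_{\mu}(k-\beta)$ for $\ln_{\C{C}}(\lambda)$ via \eqref{eq:lnmu}.

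First I would rewrite \eqref{eq:lnmu} as $\ln_{\mu}(k-\beta)=\ln_{\C{C}}(\lambda)-\ln(\sqrt{t}\,\psi_{\beta}(\xi))$ and plug it into \eqref{eq:tdeltanu}, obtaining
\[
\tilde\delta(\xi,k)=\eul^{-\ii\tilde\nu(\xi)\ln_{\C{C}}(\lambda)}\,\eul^{\ii\tilde\nu(\xi)\ln(\sqrt{t}\,\psi_{\beta}(\xi))}\,\eul^{\tilde\chi(\xi,k)}.
\]
Since $t>0$, one has $\ln(\sqrt{t}\,\psi_{\beta}(\xi))=\tfrac12\ln t+\ln\psi_{\beta}(\xi)$ for the branch of $\ln\psi_{\beta}(\xi)$ implicitly fixed by \eqref{eq:lnmu} (equivalently, the choice $\arg\psi_{\beta}(\xi)\in(-\pi/2,0)$ of Remark~\ref{rem:psibeta}), so that $\eul^{\ii\tilde\nu(\xi)\ln(\sqrt{t}\psi_{\beta}(\xi))}=t^{\frac{\ii\tilde\nu(\xi)}{2}}\eul^{\ii\tilde\nu(\xi)\ln\psi_{\beta}(\xi)}$. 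Writing $\lambda^{-\ii\tilde\nu(\xi)}\coloneqq\eul^{-\ii\tilde\nu(\xi)\ln_{\C{C}}(\lambda)}$ for the power determined by the cut $\C{C}$ (as in Appendix~\ref{sec:app}), this becomes
\[
\tilde\delta(\xi,k)=\lambda^{-\ii\tilde\nu(\xi)}\,t^{\frac{\ii\tilde\nu(\xi)}{2}}\,\eul^{\ii\tilde\nu(\xi)\ln\psi_{\beta}(\xi)}\,\eul^{\tilde\chi(\xi,k)}.
\]

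It then remains to split off the value at $k=\beta$: using $\eul^{\tilde\chi(\xi,k)}=\eul^{\tilde\chi(\xi,\beta)}\,\eul^{\tilde\chi(\xi,k)-\tilde\chi(\xi,\beta)}$ and the definitions of $\delta_0$ and $\delta_1$, the factor $\eul^{\ii\tilde\nu(\xi)\ln\psi_{\beta}(\xi)}\eul^{\tilde\chi(\xi,\beta)}$ is precisely $\delta_0(\xi)$ and $\eul^{\tilde\chi(\xi,k)-\tilde\chi(\xi,\beta)}$ is $\delta_1(\xi,k)$, which is the claimed identity. I would also record that $\tilde\chi(\xi,k)$ is analytic in $D_{\epsilon}(\beta)\setminus\gamma_{(\mu,\beta)}$ — this follows from \eqref{eq:chitilde} together with the analyticity of $L_{\mu}(s,\cdot)$ and of $r$ — so $\delta_1$ is well-defined there and $\delta_1(\xi,\beta)=1$.

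The only point requiring care, which I would state explicitly rather than brush over, is the branch bookkeeping in $\ln(\sqrt{t}\,\psi_{\beta})=\tfrac12\ln t+\ln\psi_{\beta}$: the identity holds only for the branch of $\ln\psi_{\beta}$ inherited from \eqref{eq:lnmu}, and $\delta_0$ in the statement must be read with that same branch. Everything else is an algebraic rearrangement of exponentials, valid uniformly for $(x,t)\in\D{S}$ and $k\in D_{\epsilon}(\beta)\setminus\gamma_{(\mu,\beta)}$.
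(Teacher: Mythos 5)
Your proof is correct and is essentially the paper's: the paper's own justification is simply ``Immediate from \eqref{eq:lnmu} and \eqref{eq:tdeltanu}'', and your derivation is exactly the expansion of that remark. The only addition you make — noting that $\ln(\sqrt{t}\,\psi_{\beta})=\tfrac12\ln t+\ln\psi_{\beta}$ holds for the branch of $\ln\psi_{\beta}$ inherited from \eqref{eq:lnmu}, consistent with Remark~\ref{rem:psibeta} — is useful bookkeeping that the paper leaves implicit, and your observation that $\delta_1(\xi,\beta)=1$ is a harmless extra.
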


\begin{proof}
Immediate from \eqref{eq:lnmu} and \eqref{eq:tdeltanu}.
\end{proof}

We also note that $\tilde\chi(\xi,k)$ is uniformly bounded, i.e.,
\[
\abs{\tilde\chi(\xi,k)}\leq C,\quad k\in D_{\epsilon}(\beta)\setminus\gamma_{(\mu,\beta)},\ (x,t)\in\D{S}.
\]

\subsubsection{Third transformation}

Let $m^{(\beta2)}(x,t,k)=m^{(\beta1)}(x,t,k)D(x,t,k)$, where $D$ is defined by
\begin{equation}   \label{eq:D}
D(x,t,k)=\begin{cases}
t^{\frac{\ii\tilde\nu(\xi)\sigma_3}{2}}\delta_0(\xi)^{\sigma_3},&k\in\C{S}_1\cup\C{S}_2\cup\C{S}_3\cup\C{S}_4\cup\C{S}_5,\\
I,&\text{else},
\end{cases}\quad(x,t)\in\D{S}.
\end{equation}
Then
\begin{align}   \label{eq:vbeta2}
&v_{\C{Y}_1}^{(\beta2)}=\begin{pmatrix}1&0\\\frac{rr^*}{1+rr^*}\lambda^{2\ii\tilde\nu}\delta_1^{-2}\eul^{2\ii tg_{\beta}}&1\end{pmatrix},\qquad\qquad\quad v_{\C{Y}_2}^{(\beta2)}=\begin{pmatrix}1&-\lambda^{-2\ii\tilde\nu}\delta_1^2\eul^{-\ii t(g_{\beta+}+g_{\beta-})}\\0&1\end{pmatrix},\notag\\
&v_{\C{Y}_3}^{(\beta2)}=\begin{pmatrix}1&0\\rr^*\lambda^{2\ii\tilde\nu}\delta_1^{-2}\eul^{2\ii tg_{\beta}}&1\end{pmatrix},\qquad\qquad\qquad v_{\C{Y}_4}^{(\beta2)}=\begin{pmatrix}1&-(1+rr^*)\lambda^{-2\ii\tilde\nu}\delta_1^2\eul^{-2\ii tg_{\beta}}\\0&1\end{pmatrix},\notag\\
&v_{\C{Y}_5}^{(\beta2)}=\lambda^{-\ii\tilde\nu\sigma_3}\delta_1^{\sigma_3},\qquad\qquad\qquad\qquad\qquad\qquad v_{\C{Y}_6}^{(\beta2)}=\lambda^{-\ii\tilde\nu\sigma_3}\delta_1^{\sigma_3}\begin{pmatrix}1&0\\-\frac{rr^*}{1+rr^*}\eul^{2\ii tg_{\beta}}&1\end{pmatrix},\notag\\
&v_{\C{Y}_7}^{(\beta2)}=\lambda^{-\ii\tilde\nu\sigma_3}\delta_1^{\sigma_3}\begin{pmatrix}\frac{1}{1+rr^*}&\eul^{-2\ii tg_{\beta}}\\-\frac{rr^*}{1+rr^*}\eul^{2\ii tg_{\beta}}&1\end{pmatrix},\quad v_{\C{Y}_8}^{(\beta2)}=\lambda^{-\ii\tilde\nu\sigma_3}\delta_1^{\sigma_3}\begin{pmatrix}\frac{1}{1+rr^*}&\eul^{-2\ii tg_{\beta}}\\0&1+rr^*\end{pmatrix},\notag\\
&v_{\C{Y}_9}^{(\beta2)}=\lambda^{-\ii\tilde\nu\sigma_3}\delta_1^{\sigma_3}\begin{pmatrix}1&(1+rr^*)\eul^{-2\ii tg_{\beta}}\\0&1\end{pmatrix},\notag\\
&v_{\C{Y}_{11}}^{(\beta2)}=\begin{pmatrix}\frac{1+rr^*\eul^{\ii t(g_{\beta+}-g_{\beta-})}}{1+rr^*}&\eul^{-2\ii tg_{\beta+}}(1-\eul^{\ii t(g_{\beta+}-g_{\beta-})})\\\frac{rr^*\eul^{2\ii tg_{\beta+}}(\eul^{-2\ii t(g_{\beta+}-g_{\beta-})}-1+rr^*(\eul^{-\ii t(g_{\beta+}-g_{\beta-})}-1))}{(1+rr^*)^2}&1-\frac{rr^*\eul^{-\ii t(g_{\beta+}-g_{\beta-})}(1-\eul^{-\ii t(g_{\beta+}-g_{\beta-})})}{1+rr^*}\end{pmatrix},\notag\\
&v_{\C{Y}_{14}}^{(\beta2)}=\begin{pmatrix}\eul^{\ii t(g_{\beta+}-g_{\beta-})}&0\\0&\eul^{-\ii t(g_{\beta+}-g_{\beta-})}\end{pmatrix},\qquad\quad v_{\C{Y}_{10}}^{(\beta2)}=v_{\C{Y}_{12}}^{(\beta2)}=v_{\C{Y}_{13}}^{(\beta2)}=v_{\C{Y}_{15}}^{(\beta2)}=v_{\C{Y}_{16}}^{(\beta2)}=I.
\end{align}

Let $\tilde v^Y$ denote the jump matrix $v^Y$ defined in \eqref{eq:vY} pulled back to $\C{Y}$ via the map $k\mapsto\lambda$, that is,
\begin{equation}   \label{eq:vtildeY}
\tilde v^Y(x,t,k)=v^Y(y(x,t),\tilde\nu(\xi),\lambda(x,t,k)),\quad k\in\C{Y},\ (x,t)\in\D{S},
\end{equation}
where we assume that $v^Y(y,\tilde\nu,\lambda)$ is the identity matrix whenever $\lambda\notin Y$ and where $\lambda(x,t,k)$ and $y(x,t)$ are defined in \eqref{eq:lambda} and \eqref{eq:y}, respectively. Define $\rho\equiv\rho(\xi)$ and $\rho^*\equiv\rho^*(\xi)$ by
\[
\rho=r(\beta)\quad\text{and}\quad\rho^*=r^*(\beta).
\]
For a fixed $\lambda$, $r(k)\to\rho$ and $\delta_1(\xi,k)\to 1$ as $t\to+\infty$. This suggests that $v^{(\beta2)}$ tends to $\tilde v^Y$ for large $t$. In the following subsection, we make this statement precise.
\subsubsection{Estimates of $v^{(\beta 2)}-\tilde v^Y$}

First an auxiliary lemma.

\begin{lemma}   \label{lem:egb}
Shrinking $\epsilon>0$ if necessary, we have 
\begin{equation}   \label{eq:egbestim}
\begin{cases}
\abs{\eul^{2\ii tg_{\beta}(k)}}\leq C\eul^{-ct\abs{k-\beta}^2},&k\in\C{Y}_1\cup\C{Y}_3,\\
\abs{\eul^{-2\ii tg_{\beta}(k)}}\leq C\eul^{-ct\abs{k-\beta}^2},&k\in\C{Y}_2\cup\C{Y}_4,
\end{cases}\quad(x,t)\in\D{S},
\end{equation}
for some constants $c,\,C>0$ independent of $k$, $x$, $t$.
\end{lemma}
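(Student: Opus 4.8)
The plan is to transplant the bound to the rescaled variable $\lambda=\sqrt{t}\,\psi_{\beta}(\xi)(k-\beta)$ from \eqref{eq:lambda} and then invoke Lemma~\ref{lem:gbestims}. Recall that along $\C{Y}_j$ for $j=1,\dots,4$ the variable $\lambda$ runs over the straight ray $Y_j$, and that, with the choice $c(\xi)=\abs{\psi_{\beta}(\xi)}^{-1}$ made in \eqref{eq:R}, the circle $\partial D_R(\beta)$ is mapped to the unit circle; hence $\abs{\lambda}\geq 1$ on each $\C{Y}_j$ and $\abs{\lambda}^2=t\abs{\psi_{\beta}(\xi)}^2\abs{k-\beta}^2$ there. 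The geometric input — and the real content of the lemma — is that the rays $Y_1,\dots,Y_4$ are chosen in \eqref{eq:rays} (see Figure~\ref{fig:raysY}) to lie strictly inside the open half-planes on which the relevant quadratic exponential decays: there is a constant $c_0>0$, depending only on the fixed ray angles, with
\[
\Re(-\lambda^2)\leq-c_0\abs{\lambda}^2\ \text{ for }\lambda\in Y_1\cup Y_3,\qquad \Re(\lambda^2)\leq-c_0\abs{\lambda}^2\ \text{ for }\lambda\in Y_2\cup Y_4.
\]
Here one also uses that $\abs{\psi_{\beta}(\xi)}$ is bounded below uniformly in the relevant range of $\xi$: at $\xi=\xi_0$ it equals $\sqrt{g_1(\xi_0,\beta(\xi_0))}>0$ and it depends continuously on $\xi$ (cf.\ Remark~\ref{rem:psibeta}), so the change of variables is a non-degenerate rescaling.

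Granting this, the next step is to absorb the error terms in \eqref{eq:gbestim1}. Writing $2\ii tg_{\beta}(k)=-\lambda^2-2y\lambda+\C{E}_1+\C{E}_2$ on $\C{Y}_1\cup\C{Y}_3$, with $\C{E}_1=\ord(t\abs{k-\beta}^3)$ and $\C{E}_2=\ord(t\abs{\alpha-\beta}^2\abs{\ln\abs{\alpha-\beta}})$, I would argue as follows. First, $\abs{\C{E}_1}\leq C\abs{k-\beta}\cdot t\abs{k-\beta}^2\leq C\epsilon\,t\abs{k-\beta}^2=C\epsilon\abs{\psi_{\beta}(\xi)}^{-2}\abs{\lambda}^2$, so shrinking $\epsilon$ gives $\abs{\C{E}_1}\leq\frac{c_0}{4}\abs{\lambda}^2$ — this is precisely the ``shrinking $\epsilon>0$ if necessary'' allowed in the statement. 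Second, since $\abs{\alpha-\beta}=\ord(t^{-1/2-\delta})$ on $\D{S}$ by \eqref{eq:wedge}, we get $\C{E}_2=\ord(t^{-2\delta}\ln t)=\osmall(1)$, so $\abs{\C{E}_2}\leq 1$ after increasing $T$. Third, from \eqref{eq:y} and $\abs{\alpha-\beta}=\ord(t^{-1/2-\delta})$ we have $\abs{y}\leq C\sqrt{t}\abs{\alpha-\beta}=\ord(t^{-\delta})=\ord(1)$, so by Young's inequality $\abs{2y\lambda}\leq\frac{c_0}{4}\abs{\lambda}^2+\frac{4}{c_0}\abs{y}^2\leq\frac{c_0}{4}\abs{\lambda}^2+C$.

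Combining these, on $\C{Y}_1\cup\C{Y}_3$ one obtains $\Re(2\ii tg_{\beta}(k))\leq-c_0\abs{\lambda}^2+\frac{c_0}{2}\abs{\lambda}^2+C=-\frac{c_0}{2}\abs{\lambda}^2+C$, hence $\abs{\eul^{2\ii tg_{\beta}(k)}}\leq C\eul^{-\frac{c_0}{2}\abs{\lambda}^2}=C\eul^{-\frac{c_0}{2}\abs{\psi_{\beta}(\xi)}^2 t\abs{k-\beta}^2}\leq C\eul^{-ct\abs{k-\beta}^2}$, which is the first line of \eqref{eq:egbestim}. The second line, on $\C{Y}_2\cup\C{Y}_4$, follows identically after replacing $2\ii tg_{\beta}$ by $-2\ii tg_{\beta}=\lambda^2+2y\lambda-\C{E}_1-\C{E}_2$ and using $\Re(\lambda^2)\leq-c_0\abs{\lambda}^2$ there. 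If a piece $\C{Y}_j$ happens to run along $\gamma_{\cut}$, one simply applies the argument to the relevant boundary value $g_{\beta\pm}$ and absorbs the additional $\ord(t\abs{\alpha-\beta}^2)$ coming from \eqref{eq:gbestim2} into $\C{E}_2$.

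The only genuinely delicate point — and the one I would isolate as the main obstacle — is the geometric fact quoted in the first paragraph, namely that each triangular-jump ray $Y_j$ sits strictly inside the half-plane where $\mp\lambda^2$ has negative real part, with a uniform gap $c_0>0$. This is however dictated by (and built into) the definition \eqref{eq:rays} of the model RH problem solved in Appendix~\ref{sec:app}; once it is in hand, the remainder is the routine absorption of lower-order contributions carried out above, which works precisely because $\abs{\lambda}\geq 1$ outside $D_R(\beta)$ and $\abs{\alpha-\beta}=\ord(t^{-1/2-\delta})$ throughout $\D{S}$.
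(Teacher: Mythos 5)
Your proof is correct and follows the same route as the paper: reduce to the quadratic phase $-\lambda^2-2y\lambda$ via Lemma~\ref{lem:gbestims}, use decay along the rays, and absorb the $\ord(t\abs{k-\beta}^3)$ error by shrinking $\epsilon$ and the $\ord(t\abs{\alpha-\beta}^2\abs{\ln\abs{\alpha-\beta}})$ error by taking $t$ large. The ``delicate geometric fact'' you flag as the main obstacle is actually immediate with $c_0=1$: from \eqref{eq:rays}, $Y_1\cup Y_3$ lies on the real axis and $Y_2\cup Y_4$ on the imaginary axis, so $\Re(-\lambda^2)=-\abs{\lambda}^2$ exactly on $Y_1\cup Y_3$ and $\Re(\lambda^2)=-\abs{\lambda}^2$ exactly on $Y_2\cup Y_4$; the paper records this, together with the absorption of the $2y\lambda$ cross-term, as the estimate \eqref{eq:elambdaestim} without further comment.
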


\begin{proof}
It follows from \eqref{eq:gbestim1} that
\[
\abs{\eul^{2\ii tg_{\beta}}}\leq\abs{\eul^{-\lambda^2-2y\lambda}}\eul^{Ct\abs{\alpha-\beta}^2\abs{\ln\abs{\alpha-\beta}}}\eul^{Ct\abs{k-\beta}^3},\quad k\in\C{Y}_1\cup\C{Y}_3,\ (x,t)\in\D{S}.
\]
Since $y=\ord(\sqrt{t}\abs{\alpha-\beta})=\ord(t^{-\delta})$ tends uniformly to zero as $t\to+\infty$, we have (increasing $T$ in \eqref{eq:wedge} if necessary)
\begin{equation}   \label{eq:elambdaestim}
\begin{cases}
\abs{\eul^{-\lambda^2-2y\lambda}}\leq C\eul^{-\frac{\abs{\lambda}^2}{2}},&\lambda\in Y_1\cup Y_3,\\
\abs{\eul^{\lambda^2+2y\lambda}}\leq C\eul^{-\frac{\abs{\lambda}^2}{2}},&\lambda\in Y_2\cup Y_4,
\end{cases}\quad(x,t)\in\D{S}.
\end{equation}
Moreover, for every $d>0$ sufficiently small, we have
\begin{align}   \label{eq:dsmallestim}
t\abs{\alpha-\beta}^2\abs{\ln\abs{\alpha-\beta}}
&=t\abs{\alpha-\beta}^{2-d}\times\abs{\alpha-\beta}^d\abs{\ln\abs{\alpha-\beta}}\notag\\
&\leq t\abs{\alpha-\beta}^{2-d}\leq Ct^{-c},\quad(x,t)\in\D{S}.
\end{align}
Hence
\[
\abs{\eul^{2\ii tg_{\beta}}}\leq C\eul^{-\frac{\abs{\lambda}^2}{2}}\eul^{Ct\abs{k-\beta}^3},\quad k\in\C{Y}_1\cup\C{Y}_3,\ (x,t)\in\D{S}.
\]
From the definition \eqref{eq:lambda}, we have
\[
\abs{\lambda}\leq C\sqrt{t}\,\abs{k-\beta}\leq C\abs{\lambda},\quad k\in D_{\epsilon}(\beta),\ (x,t)\in\D{S}.
\]
Hence there exists a $C_1>0$ such that
\[
\abs{\eul^{2\ii tg_{\beta}}}\leq C\eul^{-C_1t\abs{k-\beta}^2}\eul^{Ct\abs{k-\beta}^3}=C\eul^{-t\abs{k-\beta}^2(C_1-C\abs{k-\beta})},\quad k\in\C{Y}_1\cup\C{Y}_3,\ (x,t)\in\D{S}.
\]
Shrinking $\epsilon>0$ if necessary, we may assume that $C_1-C\epsilon> 0$. This proves the lemma in the case of $k\in\C{Y}_1\cup\C{Y}_3$; a similar proof applies when $k\in\C{Y}_2\cup\C{Y}_4$.
\end{proof}

\begin{lemma}   \label{lem:vb2Yapprox}
As $t\to+\infty$, $v^{(\beta2)}$ approaches the jump matrix $\tilde v^Y$ defined in \eqref{eq:vtildeY} in the sense that
\begin{alignat}{2}     \label{eq:vb2Y1}
&\norm{v^{(\beta2)}(x,t,\,\cdot\,)-\tilde v^Y(x,t,\,\cdot\,)}_{L^1(\C{Y})}\leq Ce_1(x,t)&\quad&(x,t)\in\D{S},\\
\label{eq:vb2Yinfty}
&\norm{v^{(\beta2)}(x,t,\,\cdot\,)-\tilde v^Y(x,t,\,\cdot\,)}_{L^{\infty}(\C{Y})}\leq Ce_{\infty}(x,t)&&(x,t)\in\D{S},
\end{alignat}
where $e_1$ and $e_{\infty}$ are given by
\begin{align*}
e_{\infty}(x,t)&\coloneqq t(\ln t)^{\abs{\Im\tilde\nu}}\abs{\alpha-\beta}^2\abs{\ln\abs{\alpha-\beta}}+t^{-1/2}(\ln t)^{\abs{\Im\tilde\nu}+\frac{3}{2}},\\
e_1(x,t)&\coloneqq e_{\infty}(x,t)\sqrt{\frac{\ln t}{t}}\,.
\end{align*}
\end{lemma}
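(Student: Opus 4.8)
The plan is to estimate the difference $v^{(\beta2)}-\tilde v^Y$ contour-piece by contour-piece over $\mathcal{Y}=\cup_{l=1}^{16}\mathcal{Y}_l$, using the explicit formulas \eqref{eq:vbeta2} for $v^{(\beta2)}$ and the definition \eqref{eq:vtildeY} of $\tilde v^Y$. The key point is that $\tilde v^Y$ is built from the \emph{constant} data $\rho=r(\beta)$, $\rho^*=r^*(\beta)$ and from the \emph{exact} quadratic exponent $-\lambda^2-2y\lambda$, whereas $v^{(\beta2)}$ involves $r(k)$, $r^*(k)$, $\delta_1(\xi,k)$, the factors $\lambda^{\pm2\ii\tilde\nu}$, and the true exponent $2\ii tg_\beta(\xi,k)$. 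So on each ray $\mathcal{Y}_l$, $l=1,\dots,4$, the difference decomposes into three sources of error: (i) the discrepancy $2\ii tg_\beta-(-\lambda^2-2y\lambda)$, controlled by Lemma~\ref{lem:gbestims} as $\ord(t\abs{\alpha-\beta}^2\abs{\ln\abs{\alpha-\beta}})+\ord(t\abs{k-\beta}^3)$; (ii) the discrepancy $r(k)-\rho=\ord(\abs{k-\beta})$ and $\delta_1(\xi,k)-1=\ord(\abs{k-\beta})$ (from $\tilde\chi(\xi,k)-\tilde\chi(\xi,\beta)=\ord(\abs{k-\beta})$, the latter by analyticity and uniform boundedness of $\tilde\chi$); and (iii) the growth of the factors $\lambda^{\mp\ii\tilde\nu}$, whose modulus is $\abs{\lambda}^{\pm\Im\tilde\nu}e^{\pm(\arg\lambda)\Im\tilde\nu}$ — bounded by $C(\ln t)^{\abs{\Im\tilde\nu}}$ for $\abs{\lambda}\le C\sqrt{t}\epsilon$ on the relevant rays since $\abs\lambda \ge 1$ there only gives $\abs\lambda^{\abs{\Im\tilde\nu}}$, but one uses instead that on $\mathcal{Y}_1\cup\cdots\cup\mathcal{Y}_4$ the Gaussian decay $e^{-c\abs\lambda^2}$ from Lemma~\ref{lem:egb} tames any power of $\abs\lambda$, so effectively the $\lambda$-powers contribute at most a factor $(\ln t)^{\abs{\Im\tilde\nu}}$ after one restricts to $\abs{k-\beta}\le C\sqrt{(\ln t)/t}$, outside of which the exponential is already $\ord(t^{-N})$.

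The heart of the $L^\infty$ estimate is then the elementary inequality: for $k\in\mathcal{Y}_1\cup\mathcal{Y}_3$ (and similarly $\mathcal{Y}_2\cup\mathcal{Y}_4$) one writes
\[
\abs{e^{2\ii tg_\beta}-e^{-\lambda^2-2y\lambda}}\le\abs{e^{-\lambda^2-2y\lambda}}\bigl(e^{\abs{2\ii tg_\beta-(-\lambda^2-2y\lambda)}}-1\bigr)\le C e^{-\frac{\abs\lambda^2}{2}}\bigl(t\abs{\alpha-\beta}^2\abs{\ln\abs{\alpha-\beta}}+t\abs{k-\beta}^3\bigr),
\]
using Lemma~\ref{lem:gbestims}, Lemma~\ref{lem:egb}, and \eqref{eq:dsmallestim}; since $\abs\lambda^2\sim t\abs{k-\beta}^2$, the factor $e^{-\abs\lambda^2/2}t\abs{k-\beta}^3=\abs\lambda^3 e^{-\abs\lambda^2/2}t^{-1/2}\cdot C$ is bounded by $Ct^{-1/2}$, and multiplying by the $(\ln t)^{\abs{\Im\tilde\nu}}$ coming from the $\lambda$-powers gives the $t^{-1/2}(\ln t)^{\abs{\Im\tilde\nu}}$ piece (the exponent $+\tfrac32$ on the logarithm absorbs the $\abs\lambda^3$ sup). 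Combining with the $r(k)-\rho$ and $\delta_1-1$ terms, which are $\ord(\abs{k-\beta})\cdot e^{-c\abs\lambda^2}=\ord(t^{-1/2})$ up to logs, and with the $t\abs{\alpha-\beta}^2\abs{\ln\abs{\alpha-\beta}}$ term carrying its own $(\ln t)^{\abs{\Im\tilde\nu}}$, yields exactly $e_\infty(x,t)$. On the segments $\mathcal{Y}_5,\dots,\mathcal{Y}_9$ (where $v^{(\beta2)}$ carries the factor $\lambda^{-\ii\tilde\nu\sigma_3}\delta_1^{\sigma_3}$ times a unipotent matrix with $e^{\pm2\ii tg_\beta}$ entries) the same mechanism applies; on $\mathcal{Y}_{10},\dots,\mathcal{Y}_{16}$ one has $v^{(\beta2)}=I$ or a matrix of the $\mathcal{Y}_{11},\mathcal{Y}_{14}$ type whose entries are $\ord(g_{\beta+}-g_{\beta-})\cdot t=\ord(t\abs{\alpha-\beta}^2)$ by \eqref{eq:gbestim2}, which is subsumed in $e_\infty$, while $\tilde v^Y=I$ there by the convention that $v^Y(y,\tilde\nu,\lambda)=I$ for $\lambda\notin Y$. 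The $L^1$ bound \eqref{eq:vb2Y1} follows from \eqref{eq:vb2Yinfty} by noting that the integrand is supported, up to exponentially small error, on $\abs{k-\beta}\lesssim\sqrt{(\ln t)/t}$, i.e.\ on a set of arclength $\ord(\sqrt{(\ln t)/t})$, whence the extra factor $\sqrt{(\ln t)/t}$ in $e_1$.

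\textbf{The main obstacle} I anticipate is the careful bookkeeping of the powers $\lambda^{\pm\ii\tilde\nu}$ with $\tilde\nu$ complex: one must confirm that on each ray $\mathcal{Y}_l$ the argument $\arg\lambda$ stays in a fixed bounded range (so that $e^{\pm(\arg\lambda)\Im\tilde\nu}$ is bounded uniformly in $(x,t)\in\D{S}$, using $\abs{\Im\tilde\nu}\le\abs{\Im\tilde\nu_0}+\ord(t^{-\delta})$), and that the only genuine $t$-dependence is the harmless factor $t^{\ii\tilde\nu/2}$ whose modulus $t^{-\Im\tilde\nu/2}$ is bounded (since $\abs{\Im\tilde\nu_0}<\tfrac12$) — wait, that is not bounded as a power of $t$; in fact it is absorbed into the overall prefactor $t^{\ii\tilde\nu\sigma_3/2}$ that was \emph{already} extracted by the transformation $D$ in \eqref{eq:D}, so in $v^{(\beta2)}$ only $\lambda^{\pm\ii\tilde\nu}\delta_1^{\pm1}$ survives and $\abs{\lambda}^{\abs{\Im\tilde\nu}}\le(C\sqrt t\,\epsilon)^{\abs{\Im\tilde\nu}}$ is \emph{not} bounded either — it is however killed by the Gaussian $e^{-c\abs\lambda^2}$ on the rays where it appears together with $e^{\pm2\ii tg_\beta}$, and on $\mathcal{Y}_5,\dots,\mathcal{Y}_9$ where the pure factor $\lambda^{-\ii\tilde\nu\sigma_3}$ appears without exponential decay it also appears in $\tilde v^Y$ identically, so it cancels in the difference up to the $\delta_1-1=\ord(\abs{k-\beta})$ and $g_{\beta+}-g_{\beta-}$ corrections. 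Making all of these cancellations and domination-by-Gaussian arguments airtight, uniformly over $\mathcal{Y}$ and over $(x,t)\in\D{S}$, is the delicate part; once the decomposition is set up correctly the individual estimates are routine applications of Lemmas~\ref{lem:bapprox}, \ref{lem:gbestims}, \ref{lem:tdelta}, and \ref{lem:egb}.
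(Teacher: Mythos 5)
Your proposal follows the same decomposition as the paper's proof (ray pieces $\mathcal{Y}_1,\dots,\mathcal{Y}_4$, circle arcs $\mathcal{Y}_5,\dots,\mathcal{Y}_9$, cut pieces $\mathcal{Y}_{11},\mathcal{Y}_{14}$), uses the same lemmas (\ref{lem:gbestims}, \ref{lem:egb}, \ref{lem:tdelta}) and the same near/far split at $|k-\beta|\sim\sqrt{(\ln t)/t}$ for taming the $\lambda^{\pm 2\ii\tilde\nu}$ factors, and the same effective-support heuristic for converting $L^\infty$ to $L^1$. The architecture is sound. Two points are off, however, and the first is a genuine error in the reasoning even though the conclusion happens to survive.

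First, you assert $\tilde\chi(\xi,k)-\tilde\chi(\xi,\beta)=\ord(|k-\beta|)$ ``by analyticity and uniform boundedness of $\tilde\chi$.'' But $\tilde\chi$ is \emph{not} analytic at $\beta$: the term $\int_{\gamma_{(\mu,\beta)}}L_{\mu}(s,k)\,\dd\ln(1+r(s)r^*(s))$ in \eqref{eq:chitilde} has an integrand with a logarithmic singularity at $s=k$, so the integral is continuous but only Hölder-with-log at $k=\beta$. The correct estimate is $|\tilde\chi(\xi,k)-\tilde\chi(\xi,\beta)|\leq C|k-\beta|\,|\ln|k-\beta||$, hence $|\delta_1-1|\leq C|k-\beta|\,|\ln|k-\beta||$. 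Because you wave away the discrepancy with ``up to logs,'' and because the dominant piece of $e_\infty$ already carries $(\ln t)^{|\Im\tilde\nu|+3/2}$, the final bound is unaffected — but the stated reason is wrong and the claimed $\ord(|k-\beta|)$ rate is not achievable.

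Second, the displayed inequality
\[
|e^{2\ii tg_\beta}-e^{-\lambda^2-2y\lambda}|\leq|e^{-\lambda^2-2y\lambda}|\bigl(e^{|2\ii tg_\beta-(-\lambda^2-2y\lambda)|}-1\bigr)\leq C e^{-|\lambda|^2/2}\bigl(t|\alpha-\beta|^2|\ln|\alpha-\beta||+t|k-\beta|^3\bigr)
\]
is not valid uniformly on $\mathcal{Y}_1$: the step $e^{|w|}-1\leq C|w|$ requires $|w|$ bounded, and $|w|\sim t|k-\beta|^3$ is unbounded for $|k-\beta|\sim\epsilon$. You do acknowledge in prose that one must restrict to $|k-\beta|\lesssim\sqrt{(\ln t)/t}$ and handle the far region by the Gaussian decay of each exponential separately, but the display as written is incorrect as an unconditional statement; the restriction must be built into the estimate, as it is in the paper's near/far split \eqref{eq:F2estim1}--\eqref{eq:F2estim2}. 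Finally, a small point: on the arcs $\mathcal{Y}_5,\dots,\mathcal{Y}_9$ there are no ``$g_{\beta+}-g_{\beta-}$ corrections'' — those arise only across the branch cut pieces $\mathcal{Y}_{11}$, $\mathcal{Y}_{14}$; on the arcs the relevant correction is $\delta_1-1$ together with $r-\rho$ and the exponential discrepancy, with $|\lambda|=1$ making the $\lambda^{-\ii\tilde\nu\sigma_3}$ prefactor harmless.
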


\begin{proof}
We will show that the following estimates hold uniformly for $(x,t)\in\D{S}$:
\begin{subequations}
\begin{alignat}{2}  \label{eq:vb2Y1j}
&\norm{v^{(\beta2)}(x,t,\,\cdot\,)-\tilde v^Y(x,t,\,\cdot\,)}_{L^1(\C{Y}_j)}\leq Ce_1(x,t),&\quad&j=1,\dots,9,\\ \label{eq:vb2Y111}
&\norm{v^{(\beta2)}(x,t,\,\cdot\,)-I}_{L^1(\C{Y}_{11})}\leq C\sqrt{t}\,\abs{\alpha-\beta}^2,\\ \label{eq:vb2Y114}
&\norm{v^{(\beta2)}(x,t,\,\cdot\,)-I}_{L^1(\C{Y}_{14})}\leq Ct\abs{\alpha-\beta}^3,
\end{alignat}
\end{subequations}
and
\begin{subequations}
\begin{alignat}{2}  \label{eq:vb2Yinftyj}
&\norm{v^{(\beta2)}(x,t,\,\cdot\,)-\tilde v^Y(x,t,\,\cdot\,)}_{L^{\infty}(\C{Y}_j)}\leq Ce_{\infty}(x,t),&\quad&j=1,\dots,9,\\ \label{eq:vb2Yinfty114}
&\norm{v^{(\beta2)}(x,t,\,\cdot\,)-I}_{L^{\infty}(\C{Y}_{11}\cup\C{Y}_{14})}\leq Ct\abs{\alpha-\beta}^2.
\end{alignat}
\end{subequations}
This will complete the proof.

We begin by proving \eqref{eq:vb2Yinftyj} for $j=1$. Only the $(21)$ entry of $v^{(\beta2)}-\tilde v^Y$ is nonzero for $k\in\C{Y}_1$, so we find
\begin{align}   \label{eq:vb2Y}
\abs{v^{(\beta2)}-\tilde v^Y}
&=\left\lvert\frac{rr^*}{1+rr^*}\lambda^{2\ii\tilde\nu}\delta_1^{-2}\eul^{2\ii tg_{\beta}}-\frac{\rho\rho^*}{1+\rho\rho^*}\lambda^{2\ii\tilde\nu}\eul^{-\lambda^2-2y\lambda}\right\rvert\notag\\
&\leq\abs{\lambda^{2\ii\tilde\nu}}\left(\left\lvert\frac{rr^*}{1+rr^*}-\frac{\rho\rho^*}{1+\rho\rho^*}\right\rvert\abs{\delta_1^{-2}\eul^{2\ii tg_{\beta}}}+\left\lvert\frac{\rho\rho^*}{1+\rho\rho^*}\right\rvert\abs{\delta_1^{-2}-1}\abs{\eul^{2\ii tg_{\beta}}}\right.\notag\\
&\qquad\qquad\left.+\left\lvert\frac{\rho\rho^*}{1+\rho\rho^*}\right\rvert\abs{\eul^{2\ii tg_{\beta}}-\eul^{-\lambda^2-2y\lambda}}\right),\quad k\in\C{Y}_1,\ (x,t)\in\D{S}.
\end{align}
Standard estimates show that
\[
\abs{\tilde\chi(\xi,k)-\tilde\chi(\xi,\beta)}\leq C\abs{k-\beta}\,\abs{\ln\abs{k-\beta}},\quad k\in D_{\epsilon}(\beta)\setminus\gamma_{(\mu,\beta)},\ (x,t)\in\D{S}.
\]
Using the general inequality
\begin{equation}   \label{eq:ew}
\abs{\eul^w-1}\leq\abs{w}\max(1,\eul^{\Re w}),\quad w\in\D{C},
\end{equation}
this yields
\begin{equation}   \label{eq:delta1}
\abs{\delta_1(\xi,k)-1}\leq C\abs{k-\beta}\,\abs{\ln\abs{k-\beta}},\quad k\in D_{\epsilon}(\beta)\setminus\gamma_{(\mu,\beta)},\ (x,t)\in\D{S}.
\end{equation}
On the other hand, the smoothness of $r(k)$ implies
\begin{equation}   \label{eq:restim}
\abs{r(k)-r(\beta)}\leq C\abs{k-\beta},\quad k\in D_{\epsilon}(\beta).
\end{equation}
Using these inequalities, we see that \eqref{eq:vb2Y} implies
\begin{equation}  \label{eq:vb2tY}
\abs{v^{(\beta2)}-\tilde v^Y}\leq C(F_1+F_2),\quad k\in\C{Y}_1,\ (x,t)\in\D{S},
\end{equation}
where the functions $F_j\equiv F_j(x,t,k)$, $j=1,2$, are defined by
\begin{align*}
F_1&=\abs{\lambda}^{-2\Im\tilde\nu}\abs{k-\beta}\,\abs{\ln\abs{k-\beta}}\abs{\eul^{2\ii tg_{\beta}}},\\
F_2&=\abs{\lambda}^{-2\Im\tilde\nu}\abs{\eul^{2\ii tg_{\beta}}-\eul^{-\lambda^2-2y\lambda}}.
\end{align*}

We first consider $F_1$. Employing \eqref{eq:egbestim} and using that $\sqrt{t}\,\abs{k-\beta}\geq C>0$ on $\C{Y}_1$ we obtain
\begin{align*}
\abs{F_1}&\leq C\left(\sqrt{t}\abs{k-\beta}\right)^{-2\Im\tilde\nu}\abs{k-\beta}\,\abs{\ln\abs{k-\beta}}\,\eul^{-ct\abs{k-\beta}^2}\\
&\leq\frac{C}{\sqrt{t}}\left(\sqrt{t}\abs{k-\beta}\right)^{1-2\Im\tilde\nu}\abs{\ln\abs{k-\beta}}\eul^{-ct\abs{k-\beta}^2}\\
&\leq\frac{C}{\sqrt{t}}\abs{\ln\abs{k-\beta}}\eul^{-\frac{c}{2}t\abs{k-\beta}^2},\quad k\in\C{Y}_1,\ (x,t)\in\D{S}.
\end{align*}
It follows that
\begin{subequations}   \label{eq:F11inftyY1}
\begin{equation}   \label{eq:F11Y1}
\norm{F_1}_{L^1(\C{Y}_1)}\leq\frac{C}{\sqrt{t}}\int_0^{\epsilon}\abs{\ln u}\eul^{-\frac{c}{2}tu^2}\dd u\leq Ct^{-1}\ln t,\quad (x,t)\in\D{S},
\end{equation}
and
\begin{equation}  \label{eq:F1inftyY1}
\norm{F_1}_{L^{\infty}(\C{Y}_1)}\leq Ct^{-1/2}\ln t,\quad (x,t)\in\D{S},
\end{equation}
\end{subequations}

We now consider $F_2$. On the one hand, the estimates \eqref{eq:egbestim} and \eqref{eq:elambdaestim} give
\begin{equation}    \label{eq:gblambda1}
\abs{\eul^{2\ii tg_{\beta}}-\eul^{-\lambda^2-2y\lambda}}\leq\abs{\eul^{2\ii tg_{\beta}}}+\abs{\eul^{-\lambda^2-2y\lambda}}\leq C\eul^{-ct\abs{k-\beta}^2},\quad k\in\C{Y}_1,\ (x,t)\in\D{S}.
\end{equation}
On the other hand, \eqref{eq:gbestim1}, \eqref{eq:dsmallestim}, and \eqref{eq:ew} yield
\begin{align}   \label{eq:gblambda2}
\abs{\eul^{2\ii tg_{\beta}}-\eul^{-\lambda^2-2y\lambda}}
&\leq\abs{\eul^{-\lambda^2-2y\lambda}}\abs{\eul^{\lambda^2+2y\lambda+2\ii tg_{\beta}}-1}\notag\\
&\leq Ct\left(\abs{\alpha-\beta}^2\abs{\ln\abs{\alpha-\beta}}+\abs{k-\beta}^3\right)\eul^{Ct\abs{k-\beta}^3},\quad k\in\C{Y}_1,\ (x,t)\in\D{S}.
\end{align}
Let $d>0$. On the part of $\C{Y}_1$ on which $\abs{k-\beta}\geq\sqrt{\frac{d\ln t}{ct}}$, we use the estimate \eqref{eq:gblambda1} to find
\begin{equation}   \label{eq:F2estim1}
\abs{F_2}\leq C(\sqrt{t}\abs{k-\beta})^{-2\Im\tilde\nu}t^{-d}\leq Ct^{\abs{\Im\tilde\nu}-d},\quad k\in\C{Y}_1,\ \abs{k-\beta}\geq\sqrt{\frac{d\ln t}{ct}}.
\end{equation}
On the part of $\C{Y}_1$ on which $\abs{k-\beta}\leq\sqrt{\frac{d\ln t}{ct}}$, we use the estimate \eqref{eq:gblambda2} to find
\begin{align}   \label{eq:F2estim2}
\abs{F_2}&\leq C\abs{\lambda}^{-2\Im\tilde\nu}t\left(\abs{\alpha-\beta}^2\abs{\ln\abs{\alpha-\beta}}+\abs{k-\beta}^3\right)\notag\\
&\leq C(\ln t)^{\abs{\Im\tilde\nu}}\left(t\abs{\alpha-\beta}^2\abs{\ln\abs{\alpha-\beta}}+\abs{k-\beta}^3\right),
\quad k\in\C{Y}_1,\ \abs{k-\beta}\leq\sqrt{\frac{d\ln t}{ct}}.
\end{align}
Choosing $d$ larger than $\sup_{(x,t)\in\D{S}}\abs{\Im\tilde\nu}+1$, it follows from \eqref{eq:F2estim1} and \eqref{eq:F2estim2} that 
\begin{subequations}   \label{eq:F21inftyY1}
\begin{align}    \label{eq:F21Y1}
&\norm{F_2}_{L^1(\C{Y}_1)}\notag\\
&\quad\leq Ct^{\abs{\Im\tilde\nu}-d}+C(\ln t)^{\abs{\Im\tilde\nu}}t\abs{\alpha-\beta}^2\abs{\ln\abs{\alpha-\beta}}\left(\sqrt{\frac{d\ln t}{ct}}-R\right)+C(\ln t)^{\abs{\Im\tilde\nu}}t\int_R^{\sqrt{\frac{d\ln t}{ct}}}u^3\dd u\notag\\
&\quad\leq Ct^{-1}+C(\ln t)^{\abs{\Im\tilde\nu}+\frac{1}{2}}\sqrt{t}\,\abs{\alpha-\beta}^2\abs{\ln\abs{\alpha-\beta}}+C(\ln t)^{\abs{\Im\tilde\nu}}t^{-1}\left((\ln t)^2+C\right)\notag\\
&\quad\leq C\sqrt{t}\,(\ln t)^{\abs{\Im\tilde\nu}+\frac{1}{2}}\,\abs{\alpha-\beta}^2\abs{\ln\abs{\alpha-\beta}}+Ct^{-1}(\ln t)^{\abs{\Im\tilde\nu}+2},\quad (x,t)\in\D{S},
\end{align}
and that
\begin{equation}     \label{eq:F2inftyY1}
\norm{F_2}_{L^{\infty}(\C{Y}_1)}\leq Ct(\ln t)^{\abs{\Im\tilde\nu}}\,\abs{\alpha-\beta}^2\abs{\ln\abs{\alpha-\beta}}+Ct^{-1/2}(\ln t)^{\abs{\Im\tilde\nu}+\frac{3}{2}},\quad (x,t)\in\D{S}.
\end{equation}
\end{subequations}
Combining equations \eqref{eq:vb2tY}, \eqref{eq:F11inftyY1}, and \eqref{eq:F21inftyY1}, we obtain \eqref{eq:vb2Y1j} and \eqref{eq:vb2Yinftyj} for $j=1$; the proofs for $j=2,3,4$ are similar.

We next prove \eqref{eq:vb2Yinftyj} and \eqref{eq:vb2Y1j} for $j=6$. We have
\begin{equation}    \label{eq:vb2Y6}
v^{(\beta2)}-\tilde v^Y=\lambda^{-\ii\tilde\nu\sigma_3}\Biggl(\delta_1^{\sigma_3}\begin{pmatrix}1&0\\-\frac{rr^*}{1+rr^*}\eul^{2\ii tg_{\beta}}&1\end{pmatrix}-\begin{pmatrix}1&0\\-\frac{\rho\rho^*}{1+\rho\rho^*}\eul^{-\lambda^2-2y\lambda}&1\end{pmatrix}\Biggr),\quad k\in\C{Y}_6,\ (x,t)\in\D{S}.
\end{equation}
Using \eqref{eq:delta1} and the facts that $\abs{\lambda}=1$ and $\abs{k-\beta}=\ord(t^{-1/2})$ on $\C{Y}_6$, we see that the absolute value of the $(11)$ element in \eqref{eq:vb2Y6} is bounded above by
\[
\abs{\lambda^{-\ii\tilde\nu}}\abs{\delta_1-1}\leq C\abs{k-\beta}\,\abs{\ln\abs{k-\beta}}\leq Ct^{-1/2}\ln t,\quad k\in\C{Y}_6,\ (x,t)\in\D{S}.
\]
The $(22)$ element satisfies a similar estimate. On the other hand, the $(21)$ element in \eqref{eq:vb2Y6} is bounded above by
\begin{align*}
&\abs{\lambda^{\ii\tilde{\nu}}}\left\lvert\frac{rr^*}{1+rr^*}\delta_1^{-1}\eul^{2\ii tg_{\beta}}-\frac{\rho\rho^*}{1+\rho\rho^*}\eul^{-\lambda^2-2y\lambda}\right\rvert\\
&\qquad\qquad\leq C\left\lvert\frac{rr^*}{1+rr^*}-\frac{\rho\rho^*}{1+\rho\rho^*}\right\rvert\abs{\delta_1^{-1}\eul^{2\ii tg_{\beta}}}
+C\left\lvert\frac{\rho\rho^*}{1+\rho\rho^*}\right\rvert\,\abs{\delta_1^{-1}-1}\,\abs{\eul^{2\ii tg_{\beta}}}\\
&\qquad\qquad\quad+C\left\lvert\frac{\rho\rho^*}{1+\rho\rho^*}\right\rvert\,\abs{\eul^{2\ii tg_{\beta}}-\eul^{-\lambda^2-2y\lambda}},\qquad k\in\C{Y}_6,\ (x,t)\in\D{S}.
\end{align*}
Recall that $y=\ord(t^{-\delta})$ and note that $\abs{\lambda}=1$ for $\lambda\in\C{Y}_6$. Hence we have the following analog of \eqref{eq:gblambda2}:
\begin{align}
\abs{\eul^{2\ii tg_{\beta}}-\eul^{-\lambda^2-2y\lambda}}
&\leq C\abs{\eul^{\lambda^2+2y\lambda+2\ii tg_{\beta}}-1}\notag\\
&\leq Ct\abs{\alpha-\beta}^2\abs{\ln\abs{\alpha-\beta}}+Ct^{-1/2},\quad k\in\C{Y}_6,\ (x,t)\in\D{S}.
\end{align}
Using \eqref{eq:delta1}, we conclude that the $(21)$ element in \eqref{eq:vb2Y6} is bounded above by
\begin{align*}
&C\abs{k-\beta}\abs{\ln\abs{k-\beta}}+Ct\abs{\alpha-\beta}^2\abs{\ln\abs{\alpha-\beta}}+Ct^{-1/2}\\
&\qquad\leq Ct^{-1/2}\ln t+Ct\abs{\alpha-\beta}^2\abs{\ln\abs{\alpha-\beta}},\quad k\in\C{Y}_6,\ (x,t)\in\D{S}.
\end{align*}
Hence, since $\C{Y}_6$ has length of order $\ord(t^{-1/2})$, we arrive at
\begin{align*}
\norm{v^{(\beta2)}-\tilde v^Y}_{L^1(\C{Y}_6)}&\leq Ct^{-1}\ln t+C\sqrt{t}\,\abs{\alpha-\beta}^2\abs{\ln\abs{\alpha-\beta}},\\
\norm{v^{(\beta2)}-\tilde v^Y}_{L^{\infty}(\C{Y}_6)}&\leq Ct^{-\frac{1}{2}}\ln t+Ct\,\abs{\alpha-\beta}^2\abs{\ln\abs{\alpha-\beta}}. 
\end{align*}
This proves \eqref{eq:vb2Y1j} and \eqref{eq:vb2Yinftyj} for $j=6$; the proofs for $j=5,7,8,9$ are similar.

Since $\C{Y}_{11}$ has length of order $\ord(t^{-1/2})$ and $\C{Y}_{14}$ has length of order $\ord(\abs{\alpha-\beta})$, equations \eqref{eq:vb2Y111} and \eqref{eq:vb2Y114} are a direct consequence of \eqref{eq:vb2Yinfty114}. It therefore only remains to prove \eqref{eq:vb2Yinfty114}.

According to \eqref{eq:gbestim2} we have
\[
t\abs{g_{\beta+}(k)-g_{\beta-}(k)}\leq Ct\abs{\alpha-\beta}^2\leq C,\quad k\in\C{Y}_{11}\cup\C{Y}_{14},\ (x,t)\in\D{S}.
\]
Thus, by \eqref{eq:ew},
\[
\abs{\eul^{\ii t(g_{\beta+}(k)-g_{\beta-}(k))}-1}\leq Ct\abs{\alpha-\beta}^2,\quad k\in\C{Y}_{11}\cup\C{Y}_{14},\ (x,t)\in\D{S}.
\]
The estimate \eqref{eq:vb2Yinfty114} now follows from the expressions for $v_{\C{Y}_{11}}^{(\beta2)}$ and $v_{\C{Y}_{14}}^{(\beta2)}$ given in \eqref{eq:vbeta2}.
\end{proof}

Let $\tilde m^Y$ denote the solution $m^Y$ of Appendix~\ref{sec:app} pulled back to $D_{\epsilon}(\beta)$ via the map $k\mapsto\lambda$, that is,
\begin{equation}     \label{eq:mtildeY}
\tilde m^Y(x,t,k)=m^Y\bigl(y(x,t),\tilde\nu(\xi),\lambda(x,t,k)\bigr),\quad k\in D_{\epsilon}(\beta)\setminus\C{Y},\ (x,t)\in\D{S},
\end{equation}
where $\tilde\nu(\xi)$, $\lambda(x,t,k)$, and $y(x,t)$ are defined in \eqref{eq:nutilde}, \eqref{eq:lambda}, and \eqref{eq:y}, respectively.

We have $m^{(\beta2)}=\hat m^{(6)}G$ where
\[
G(x,t,k)\coloneqq\eul^{-\ii\tilde h\sigma_3}ABD
\]
and the matrix-valued functions $A,B,D$ are defined in \eqref{eq:A}, \eqref{eq:B}, and \eqref{eq:D}, respectively. Lemma~\ref{lem:vb2Yapprox} shows that the jumps of $m^{(\beta2)}=\hat m^{(6)}G$ across $\C{Y}$ approach those of $\tilde m^Y$. In other words, as $t\to+\infty$, the jumps of $\hat m^{(6)}$ approach those of the function $\tilde m^YG^{-1}$. This suggests that we approximate $\hat m^{(6)}$ in $D_{\epsilon}(\beta)$ by a $2\times 2$-matrix valued function $m^{\beta}$ of the form
\begin{equation}    \label{eq:mbeta}
m^{\beta}\coloneqq Y_{\beta}t^{\frac{\ii\tilde\nu}{2}\sigma_3}\tilde m^YG^{-1},
\end{equation}
where $Y_{\beta}(x,t,k)$ is a function which is analytic for $k\in D_{\epsilon}(\beta)$ and we have included the $k$-independent factor $t^{\frac{\ii\tilde\nu}{2}\sigma_3}$ in order to make $Y_{\beta}$ of order $\ord(1)$. To ensure that $m^{\beta}$ is a good approximation of $\hat m^{(6)}$ for large $t$, we want to choose $Y_{\beta}(k)$ so that $m^{\beta}(m^{\model})^{-1}\to I$ on $\partial D_{\epsilon}(\beta)$ as $t\to+\infty$. Now
\[
G=\eul^{-\ii\tilde h\sigma_3}(\delta^2\hat a\hat b)^{\sigma_3/2}t^{\frac{\ii\tilde\nu}{2}\sigma_3}\delta_0^{\sigma_3}\times\begin{cases}\eul^{-\ii tg_+(\beta)\sigma_3}&k\in(D_{\epsilon}(\beta)\setminus D_{R}(\beta))\cap\C{S}_+,\\
\eul^{-\ii tg_-(\beta)\sigma_3}&k\in(D_{\epsilon}(\beta)\setminus D_{R}(\beta))\cap\C{S}_-.
\end{cases}
\]
We therefore define (we arbitrarily choose to define $Y_{\beta}$ using the expression involving $g_+(\beta)$; we could equally well have used the expression involving $g_-(\beta)$)
\begin{equation}    \label{eq:Ybeta}
Y_{\beta}(x,t,k)\coloneqq m^{\model}(x,t,k)\eul^{-\ii\tilde h\sigma_3}(\delta^2\hat a\hat b)^{\sigma_3/2}\delta_0^{\sigma_3}\eul^{-\ii tg_+(\beta)\sigma_3}.
\end{equation}

\begin{lemma}   \label{lem:mbeta}
For each $(x,t)\in\D{S}$, the function $m^{\beta}(x,t,k)$ defined in \eqref{eq:mbeta} is an analytic function of $k\in D_{\epsilon}(\beta)\setminus\C{Y}$ and the function $Y_{\beta}(x,t,k)$ defined in \eqref{eq:Ybeta} is an analytic function of $k\in D_{\epsilon}(\beta)$. Moreover, we have the uniform estimate
\begin{equation}
\label{eq:Ybbound}
\abs{Y_{\beta}(x,t,k)}\leq C,\quad k\in D_{\epsilon}(\beta),\quad(x,t)\in\D{S}.
\end{equation}
Across $\C{Y}$, $m^{\beta}$ obeys the jump condition $m_+^{\beta}=m_-^{\beta}v^{\beta}$, where the jump matrix $v^{\beta}$ satisfies
\begin{subequations} \label{eq:v6b}
\begin{alignat}{2}   \label{eq:v6binfty}
&\norm{m_-^{\beta}(\hat v^{(6)}-v^{\beta})(m_+^{\beta})^{-1}}_{L^{\infty}(\C{Y})}\leq CE_{\infty}(x,t),&\quad&(x,t)\in\D{S},\\
\label{eq:v6b1}
&\norm{m_-^{\beta}(\hat v^{(6)}-v^{\beta})(m_+^{\beta})^{-1}}_{L^1(\C{Y})}\leq CE_1(x,t),&&(x,t)\in\D{S},
\end{alignat}
\end{subequations} 
with the functions $E_1$ and $E_{\infty}$ defined by
\begin{align}
E_{\infty}(x,t)&\coloneqq t^{\abs{\Im\tilde\nu}}e_{\infty}(x,t)\notag\\
&=t^{\abs{\Im\tilde\nu}+1}(\ln t)^{\abs{\Im\tilde\nu}}\abs{\alpha-\beta}^2\abs{\ln\abs{\alpha-\beta}}+t^{\abs{\Im\tilde\nu}-\frac{1}{2}}(\ln t)^{\abs{\Im\tilde\nu}+\frac{3}{2}},\\
E_1(x,t)&\coloneqq E_{\infty}(x,t)\sqrt{\frac{\ln t}{t}}.
\end{align}
Furthermore, on $\partial D_{\epsilon}(\beta)$ the quotient $m^{\model}(m^{\beta})^{-1}$ satisfies
\begin{align}    \label{eq:mmodbbound}
&\left\lvert\frac{1}{2\pi\ii}\int_{\partial D_{\epsilon}(\beta)}\left(m^{\model}(m^{\beta})^{-1}-I\right)\dd k-\frac{Y_{\beta}(x,t,\beta)t^{\frac{\ii\tilde\nu}{2}\sigma_3}m_1^Y(y,\tilde\nu)t^{-\frac{\ii\tilde\nu}{2}\sigma_3}Y_{\beta}(x,t,\beta)^{-1}}{\sqrt{t}\,\psi_{\beta}(\xi)}\right\rvert\notag\\
&\qquad\leq Ct^{\abs{\Im\tilde\nu}}\left(t^{-1}+t\abs{\alpha-\beta}^2\right),\qquad (x,t)\in\D{S},
\end{align}
and
\begin{equation}   \label{eq:mmodbinfty}
\norm{m^{\model}(m^{\beta})^{-1}-I}_{L^{\infty}(\partial D_{\epsilon}(\beta))}\leq Ct^{\abs{\Im\tilde\nu}-\frac{1}{2}}+Ct^{\abs{\Im \tilde{\nu}}+1}\abs{\alpha-\beta}^2,\quad(x,t)\in\D{S},
\end{equation}
where $m_1^Y(y,\tilde\nu)$ is defined in \eqref{eq:m1Y}.
\end{lemma}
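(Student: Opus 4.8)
The plan is to verify the five assertions by treating $m^{\beta}=Y_{\beta}t^{\frac{\ii\tilde\nu}{2}\sigma_3}\tilde m^Y G^{-1}$ as a product of factors whose analyticity and size are already controlled. For $Y_{\beta}$ (defined in \eqref{eq:Ybeta}): since $D_{\epsilon}(\beta)$ is disjoint from $\Sigma^{\model}$, the factor $m^{\model}$ is analytic in $D_{\epsilon}(\beta)$ (by the explicit theta-function representation underlying Theorem~\ref{thm:mmod}), $\eul^{-\ii\tilde h\sigma_3}$ is analytic there by Lemma~\ref{omega-1}(c), $(\delta^2\hat a\hat b)^{\sigma_3/2}$ is analytic because $\delta^2\hat a\hat b$ is analytic and nonvanishing on $D_{\epsilon}(\beta)$, and $\delta_0^{\sigma_3}$, $\eul^{-\ii tg_+(\xi,\beta)\sigma_3}$ are $k$-independent; hence $Y_{\beta}$ is analytic in $D_{\epsilon}(\beta)$. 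The bound \eqref{eq:Ybbound} (and the analogous one for $Y_{\beta}^{-1}$) follows because each factor is uniformly bounded: $m^{\model}$ from its theta-function formula, $\eul^{\pm\ii\tilde h\sigma_3}$ from Lemma~\ref{omega-1}(d), $\delta^{\pm1}$ from \cite{BLS22}*{Lemma~4.1}, $\hat a^{\pm1},\hat b^{\pm1}$ from analyticity and non-vanishing, $\delta_0^{\pm1}$ because $\tilde\nu$, $\ln\psi_{\beta}$ and $\tilde\chi(\xi,\beta)$ are bounded, and $\eul^{\mp\ii tg_+(\xi,\beta)\sigma_3}$ because $g_+(\xi,\beta)$ is real. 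For the analyticity of $m^{\beta}$ in $D_{\epsilon}(\beta)\setminus\C{Y}$: $\tilde m^Y$ is the pullback of $m^Y$ under the biholomorphism $k\mapsto\lambda$, which carries $\C{Y}$ into the model contour $Y$, and $m^Y$ is analytic off $Y$; while $G^{-1}=D^{-1}B^{-1}A^{-1}\eul^{\ii\tilde h\sigma_3}$ is sectionally analytic with all of its jumps (the cut of $A$ along $\gamma_{\cut}$, the jumps of $B$ across $\C{S}_7,\dots,\C{S}_{10}$, the jumps of $D$ across $\partial D_R(\beta)$) contained in $\C{Y}$. Together with the analyticity of $Y_{\beta}$ this gives the claim.

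For the jump estimates \eqref{eq:v6b} I would set $v^{\beta}\coloneqq G_-\tilde v^Y G_+^{-1}$, so that $m^{\beta}$ satisfies $m_+^{\beta}=m_-^{\beta}v^{\beta}$; since $m^{(\beta2)}=\hat m^{(6)}G$ gives $\hat v^{(6)}=G_-v^{(\beta2)}G_+^{-1}$, the factors $G_{\pm}$ cancel against those hidden in $m_{\pm}^{\beta}$ and
\[
m_-^{\beta}(\hat v^{(6)}-v^{\beta})(m_+^{\beta})^{-1}=Y_{\beta}\,t^{\frac{\ii\tilde\nu}{2}\sigma_3}\,\tilde m_-^Y\bigl(v^{(\beta2)}-\tilde v^Y\bigr)(\tilde m_+^Y)^{-1}\,t^{-\frac{\ii\tilde\nu}{2}\sigma_3}\,Y_{\beta}^{-1}.
\]
Since $\abs{Y_{\beta}^{\pm1}}\leq C$, $\abs{t^{\pm\frac{\ii\tilde\nu}{2}\sigma_3}}\leq Ct^{\frac{\abs{\Im\tilde\nu}}{2}}$, and $m^Y$ and $(m^Y)^{-1}$ (recall $\det m^Y\equiv1$) are uniformly bounded on $Y$ for $y$ in a bounded set (indeed $y=\ord(t^{-\delta})$), Lemma~\ref{lem:vb2Yapprox} yields $\norm{m_-^{\beta}(\hat v^{(6)}-v^{\beta})(m_+^{\beta})^{-1}}_{L^{\infty}(\C{Y})}\leq Ct^{\abs{\Im\tilde\nu}}e_{\infty}=CE_{\infty}$ and, likewise, $\leq Ct^{\abs{\Im\tilde\nu}}e_1=CE_1$ in $L^1$; on $\C{Y}_{11}\cup\C{Y}_{14}$ one uses the $\norm{v^{(\beta2)}-I}$ bounds of Lemma~\ref{lem:vb2Yapprox} instead, which are dominated by the same quantities.

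For the matching on $\partial D_{\epsilon}(\beta)$ I would first compute, using the explicit form of $G$ on $D_{\epsilon}(\beta)\setminus D_R(\beta)$, the commutativity of $\delta_0^{\sigma_3}$ and $\eul^{-\ii tg_{\pm}(\xi,\beta)\sigma_3}$ with $t^{\frac{\ii\tilde\nu}{2}\sigma_3}$, and the identity defining $Y_{\beta}$, that $m^{\model}(m^{\beta})^{-1}=Y_{\beta}t^{\frac{\ii\tilde\nu}{2}\sigma_3}(\tilde m^Y)^{-1}t^{-\frac{\ii\tilde\nu}{2}\sigma_3}Y_{\beta}^{-1}$ on the part of $\partial D_{\epsilon}(\beta)$ in $\C{S}_+$, and the same up to the diagonal factor $\eul^{\ii t(g_+(\xi,\beta)-g_-(\xi,\beta))\sigma_3}=I+\ord(t\abs{\alpha-\beta}^2)$ (using $g_+(\xi,\beta)-g_-(\xi,\beta)=\ord(\abs{\alpha-\beta}^2)$, which follows from Lemma~\ref{lem:gbestims} and the vanishing of $g_+-g_-$ at infinity along $\gamma_{(\alpha,\infty)}$) on the part in $\C{S}_-$. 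On $\partial D_{\epsilon}(\beta)$ one has $\lambda=\sqrt t\,\psi_{\beta}(\xi)(k-\beta)$ with $\abs{\lambda}\asymp\sqrt t$, so the large-$\lambda$ asymptotics $m^Y(y,\tilde\nu,\lambda)=I+\lambda^{-1}m_1^Y(y,\tilde\nu)+\ord(\lambda^{-2})$ of Appendix~\ref{sec:app} gives $(\tilde m^Y)^{-1}=I-\frac{m_1^Y(y,\tilde\nu)}{\sqrt t\,\psi_{\beta}(\xi)(k-\beta)}+\ord(t^{-1})$; conjugating by $t^{\pm\frac{\ii\tilde\nu}{2}\sigma_3}$ multiplies off-diagonal entries by $t^{\pm\ii\tilde\nu}$, and this produces \eqref{eq:mmodbinfty}. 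For \eqref{eq:mmodbbound}, since $Y_{\beta}$ is analytic in $D_{\epsilon}(\beta)$ and $\psi_{\beta}$ is $k$-independent, the residue theorem evaluates $\frac{1}{2\pi\ii}\int_{\partial D_{\epsilon}(\beta)}$ of the $\lambda^{-1}$-term and reproduces the $Y_{\beta}(x,t,\beta)$-expression stated in the lemma, while the $\ord(t^{-1})$ remainder and the $\C{S}_-$-correction contribute $Ct^{\abs{\Im\tilde\nu}}(t^{-1}+t\abs{\alpha-\beta}^2)$.

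The step I expect to be the main obstacle is this last one: getting the algebraic identity for $m^{\model}(m^{\beta})^{-1}$ exactly right on each subdomain of $\partial D_{\epsilon}(\beta)$ — so that all the diagonal factors $\delta_0$, $\eul^{-\ii tg_{\pm}(\xi,\beta)\sigma_3}$ and $t^{\frac{\ii\tilde\nu}{2}\sigma_3}$ cancel correctly against those built into $G$ and $Y_{\beta}$, with the $\C{S}_+/\C{S}_-$ mismatch absorbed into the $\ord(t\abs{\alpha-\beta}^2)$ error — and tracking the $y$-dependence of the $\ord(\lambda^{-2})$ error in the model solution $m^Y$ carefully enough that, after the $t^{\pm\frac{\ii\tilde\nu}{2}\sigma_3}$-conjugation, one obtains precisely the $t\abs{\alpha-\beta}^2$ term and not something larger.
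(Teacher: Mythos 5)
Your proposal is correct and follows essentially the same route as the paper's proof: you set $v^{\beta}=G_-\tilde v^YG_+^{-1}$, reduce $m_-^{\beta}(\hat v^{(6)}-v^{\beta})(m_+^{\beta})^{-1}$ to $Y_{\beta}t^{\frac{\ii\tilde\nu}{2}\sigma_3}\tilde m_-^Y(v^{(\beta2)}-\tilde v^Y)(\tilde m_+^Y)^{-1}t^{-\frac{\ii\tilde\nu}{2}\sigma_3}Y_{\beta}^{-1}$, invoke the boundedness of $Y_{\beta}^{\pm1}$, $m^Y$, the factor $t^{\abs{\Im\tilde\nu}}$ and Lemma~\ref{lem:vb2Yapprox}, and then derive the $\partial D_{\epsilon}(\beta)$ estimates from the decomposition into $\C{S}_{\pm}$, the mismatch $\eul^{\ii t(g_+(\beta)-g_-(\beta))\sigma_3}=I+\ord(t\abs{\alpha-\beta}^2)$, and the large-$\lambda$ expansion \eqref{eq:mYas}. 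The paper presents the analyticity and the $Y_{\beta}$ bound more tersely (noting only that $\Im g_+(\beta)=0$), but the substance and every estimate are the same as what you give.
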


\begin{proof}
The analyticity properties of $m^{\beta}$ and $Y_{\beta}$ are immediate. Using that $\Im g_+(\beta)=0$ (see \cite{BLS22}*{Section 3}), the bound \eqref{eq:Ybbound} on $Y_{\beta}$ follows from the definition \eqref{eq:Ybeta}. 

We next establish the estimates \eqref{eq:v6b}. We have
\[
\hat v^{(6)}-v^{\beta}=G_-\left(v^{(\beta2)}-\tilde v^Y\right)G_+^{-1},\quad k\in\C{Y},\ (x,t)\in\D{S},
\]
and so
\[
m_-^{\beta}(\hat v^{(6)}-v^{\beta})(m_+^{\beta})^{-1}=Y_{\beta}\,t^{\frac{\ii \tilde\nu}{2}\sigma_3}\tilde m_-^Y(v^{(\beta2)}-\tilde v^Y)(\tilde m_+^Y)^{-1}t^{-\frac{\ii\tilde\nu}{2}\sigma_3}Y_{\beta}^{-1}.
\]
In view of the bounds \eqref{eq:Ybbound} and \eqref{eq:mYbound} on $Y_{\beta}$ and $m^Y$, this gives
\[
\abs{m_-^{\beta}(\hat v^{(6)}-v^{\beta})(m_+^{\beta})^{-1}}\leq Ct^{\abs{\Im\tilde\nu}}\abs{v^{(\beta2)}-\tilde v^Y},\quad k\in\C{Y},\ (x,t)\in\D{S}.
\]
The estimates \eqref{eq:v6b} now follow from Lemma~\ref{lem:vb2Yapprox}.

It remains to prove \eqref{eq:mmodbbound} and \eqref{eq:mmodbinfty}. Note that 
\begin{align*}
&m^{\model}(m^{\beta})^{-1}-I=Y_{\beta}t^{\frac{\ii\tilde\nu}{2}\sigma_3}\left((\tilde m^Y)^{-1}-I\right)t^{-\frac{\ii\tilde\nu}{2}\sigma_3}Y_{\beta}^{-1}\\
&\qquad\quad+\begin{cases}
0,&k\in\partial D_{\epsilon}(\beta)\cap\C{S}_+,\\
Y_{\beta}t^{\frac{\ii\tilde\nu}{2}\sigma_3}\left(\eul^{\ii t(g_+(\beta)-g_-(\beta))\sigma_3}-I\right)(\tilde m^Y)^{-1}t^{-\frac{\ii\tilde\nu}{2}\sigma_3}Y_{\beta}^{-1},&k\in\partial D_{\epsilon}(\beta)\cap\C{S}_-.
\end{cases}
\end{align*}
Since $\inf_{(x,t)\in\D{S}}\abs{\psi_{\beta}(\xi)}>0$, the variable $\lambda=\sqrt{t}(k-\beta)\psi_{\beta}(\xi)$ goes to infinity as $t\to+\infty$ whenever $k\in\partial D_{\epsilon}(\beta)$. Thus equation \eqref{eq:mYas} yields (see \eqref{eq:mtildeY})
\[
\tilde m^Y(x,t,k)=I+\frac{m_1^Y(y,\tilde\nu)}{\sqrt{t}(k-\beta)\psi_{\beta}(\xi)}+\ord(t^{-1}),\quad t\to+\infty,\ k\in\partial D_{\epsilon}(\beta),\ (x,t)\in\D{S},
\]
uniformly with respect to $k\in\partial D_{\epsilon}(\beta)$ and $(x,t)\in\D{S}$. Consequently, using \eqref{eq:Ybbound} and \eqref{eq:mYbound},
\begin{align}   \label{eq:mmodbestim}
\left\lvert
m^{\model}(m^{\beta})^{-1}-I+\frac{Y_{\beta}t^{\frac{\ii\tilde\nu}{2}\sigma_3}m_1^Yt^{-\frac{\ii\tilde\nu}{2}\sigma_3}Y_{\beta}^{-1}}{\sqrt{t}(k-\beta)\psi_{\beta}(\xi)}\right\rvert&\leq Ct^{\abs{\Im\tilde\nu}-1}+Ct^{\abs{\Im\tilde\nu}+1}\abs{g_+(\beta)-g_-(\beta)},\notag\\
&\qquad\qquad\qquad k\in\partial D_{\epsilon}(\beta),\ (x,t)\in\D{S}.
\end{align}
But, by \eqref{eq:gbestim2},
\[
\abs{g_+(\beta)-g_-(\beta)}\leq C\abs{\alpha-\beta}^2.
\]
Thus, using that the functions $Y_{\beta}^{\pm1}$ are analytic in $D_{\epsilon}(\beta)$, equation \eqref{eq:mmodbbound} follows from \eqref{eq:mmodbestim} and Cauchy's formula. Since $\abs{\psi_{\beta}(\xi)}^{-1}\leq C$ for $(x,t)\in\D{S}$, \eqref{eq:mmodbinfty} also follows from \eqref{eq:mmodbestim}.
\end{proof}

\section{Final steps}   \label{sec:final}
\subsection{The approximate solution}  \label{sec:approx}

Define a local solution $m^{\bar\beta}$ for $k\in D_{\epsilon}(\bar\beta)$ by
\[
m^{\bar\beta}(x,t,k)=\sigma_3\sigma_1\overline{m^{\beta}(x,t,\bar k)}\sigma_1\sigma_3.
\]
We define an approximate solution $m^{\appr}(x,t,k)$ by
\[
m^{\appr}=\begin{cases}
m^{\beta},&k\in D_{\epsilon}(\beta),\\
m^{\mu},&k\in D_{\epsilon}(\mu),\\
m^{\bar\beta},&k\in D_{\epsilon}(\bar\beta),\\
m^{\model},&\text{elsewhere}.
\end{cases}
\]
\subsection{The solution $\BS{m^{\diff}}$}  \label{sec:mhat}

We will show that the function $m^{\diff}(x,t,k)$ defined by
\[
m^{\diff}=\hat m^{(6)}(m^{\appr})^{-1}
\]
is such that $m^{\diff}-I$ is small for large $t$. Let $\C{D}=D_{\epsilon}(\beta)\cup D_{\epsilon}(\mu)\cup D_{\epsilon}(\bar\beta)$ denote the union of the three open disks in \eqref{eq:threedisks}. The function $m^{\diff}$ satisfies the RH problem
\begin{equation}    \label{eq:rhphat}
\begin{cases}
m^{\diff}(x,t,\,\cdot\,)\in I+\dot E^2(\D{C}\setminus\Sigma^{\diff}),&\\
m^{\diff}_+(x,t,k)=m^{\diff}_-(x,t,k)v^{\diff}(x,t,k)&\text{for a.e. }k\in\Sigma^{\diff},
\end{cases}
\end{equation}
where the contour $\Sigma^{\diff}\coloneqq(\Sigma^{(6)}\setminus\Sigma^{\model})\cup\partial\C{D}$ is displayed in Figure~\ref{fig:hat-contour} and the jump matrix $v^{\diff}$ is given by
\[
v^{\diff}=\begin{cases}
m^{\model}\hat v^{(6)}(m^{\model})^{-1},&k\in\Sigma^{(6)}\setminus\bar{\C{D}},\\
m^{\model}(m^{\beta})^{-1},&k\in\partial D_{\epsilon}(\beta),\\
m^{\model}(m^{\mu})^{-1},&k\in\partial D_{\epsilon}(\mu),\\
m^{\model}(m^{\bar\beta})^{-1},&k\in\partial D_{\epsilon}(\bar\beta),\\
m_-^{\beta}\hat v^{(6)}(m_+^{\beta})^{-1},&k\in\C{Y},\\
m_-^{\mu}\hat v^{(6)}(m_+^{\mu})^{-1},&k\in\C{X},\\
m_-^{\bar\beta}\hat v^{(6)}(m_+^{\bar\beta})^{-1},&k\in\Sigma^{(6)}\cap D_{\epsilon}(\bar\beta)
\end{cases}
\]
with $\C{Y}\coloneqq\Sigma^{(6)}\cap D_{\epsilon}(\beta)$ (see Figure~\ref{fig:jump-contour-Y}) and $\C{X}\coloneqq\Sigma^{(6)}\cap D_{\epsilon}(\mu)$.

\begin{figure}[ht]
\centering\includegraphics[scale=.7]{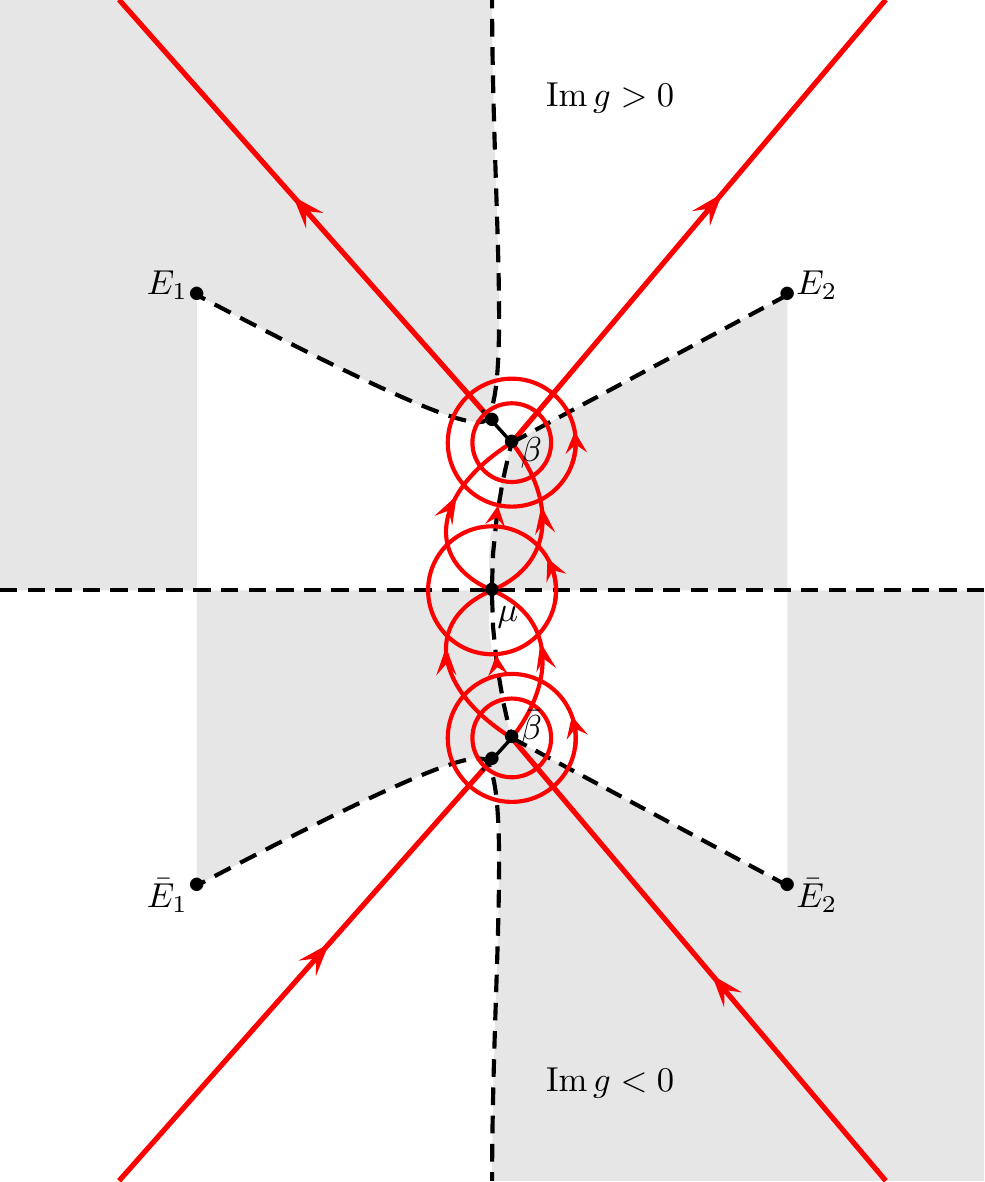}
\caption{The jump contour $\Sigma^{\diff}$.} 
\label{fig:hat-contour}
\end{figure}

Let $\hat w\coloneqq v^{\diff}-I$. The matrix $\hat v^{(6)}-I$ is exponentially small on $\Sigma'\coloneqq\Sigma^{(6)}\setminus(\bar{\C{D}}\cup\gamma_{(\bar\beta,\beta)})$, uniformly with respect to $(x,t)\in\D{S}$, that is,
\begin{subequations}   \label{eq:whatestims}
\begin{equation}  \label{eq:whatestima}
\norm{\hat w}_{(L^1\cap L^{\infty})(\Sigma')}\leq C\eul^{-ct},\quad(x,t)\in\D{S}.
\end{equation}
On the other hand, equation \eqref{eq:mmodbinfty} implies
\begin{equation}  \label{eq:whatestimb}
\norm{\hat w}_{(L^1\cap L^{\infty})(\partial D_{\epsilon}(\beta)\cup\partial D_{\epsilon}(\bar\beta))}\leq Ct^{\abs{\Im\tilde\nu}-\frac{1}{2}}+Ct^{\abs{\Im\tilde{\nu}}+1}\abs{\alpha-\beta}^2,\quad(x,t)\in\D{S}.
\end{equation}
As in \cite{BLS22}*{(7.2c)}, we have
\begin{equation}  \label{eq:whatestimc}
\norm{\hat w}_{(L^1\cap L^{\infty})(\partial D_{\epsilon}(\mu))}\leq Ct^{-1/2},\quad(x,t)\in\D{S},
\end{equation}
and
\begin{equation}  \label{eq:whatestimd}
\begin{cases}
\norm{\hat w}_{L^1(\C{X})}\leq Ct^{-1}\ln t,\\
\norm{\hat w}_{L^2(\C{X})}\leq Ct^{-3/4}\ln t,\\
\norm{\hat w}_{L^{\infty}(\C{X})}\leq Ct^{-1/2}\ln t,
\end{cases}\quad(x,t)\in\D{S}.
\end{equation}
For $k\in\C{Y}$, we have 
\[
\hat w=m_-^{\beta}(\hat v^{(6)}-v^{\beta})(m_+^{\beta})^{-1},
\]
so Lemma~\ref{lem:mbeta} yields
\begin{equation}   \label{eq:whatestime}
\begin{cases}
\norm{\hat w}_{L^1(\C{Y})}\leq CE_1(x,t),\\
\norm{\hat w}_{L^{\infty}(\C{Y})}\leq CE_{\infty}(x,t),
\end{cases}\quad(x,t)\in\D{S}.
\end{equation}
For $k\in\gamma_{(\bar\beta,\beta)}\setminus\bar{\C{D}}$ we have $\hat w=m^{\model}(\eul^{\ii t(g_+-g_-)\sigma_3}-I)(m^{\model})^{-1}$; thus, since $g_+-g_-$ is constant on $\gamma_{(\bar\beta,\beta)}$ and $g_+(\beta)-g_-(\beta)=\ord(\abs{\alpha-\beta}^2)$ (cf.\ \eqref{eq:gbestim2}),
\begin{equation}  \label{eq:whatestimf}
\norm{\hat w}_{(L^1\cap L^2\cap L^{\infty})(\gamma_{(\bar\beta,\beta)}\setminus\bar{\C{D}})}\leq Ct\abs{\alpha-\beta}^2,\quad(x,t)\in\D{S}.
\end{equation}
\end{subequations}

Equations~\eqref{eq:whatestims} show that
\begin{subequations} 
\begin{alignat}{2}
\norm{\hat w}_{L^1(\Sigma^{\diff})}&\leq CE_1'(x,t),&\quad&(x,t)\in\D{S},\\
\norm{\hat w}_{L^{\infty}(\Sigma^{\diff})}&\leq CE_{\infty}(x,t),&&(x,t)\in\D{S},
\end{alignat}
\end{subequations}
where
\begin{equation} \label{E1prim}
E_1'(x,t)\coloneqq E_1(x,t)+t^{\abs{\Im\tilde\nu}-\frac{1}{2}}+t^{\abs{\Im\tilde{\nu}}+1}\abs{\alpha-\beta}^2.
\end{equation}
Since $\abs{\Im\tilde\nu}<1/2$, $E_{\infty}(x,t)$ goes uniformly to zero as $t\to+\infty$. Hence
\begin{equation}
\norm{\hat{\C{C}}_{\hat w}}_{\C{B}(L^2(\Sigma^{\diff}))}\leq C\norm{\hat w}_{L^{\infty}(\Sigma^{\diff})}\to 0,\quad t\to+\infty,
\end{equation}
uniformly with respect to $(x,t)\in\D{S}$, where $\hat{\C{C}}_{\hat w}$ is defined by $\hat{\C{C}}_{\hat w}f=\hat{\C{C}}_-(f\hat w)$ as in \cite{BLS22}*{Section 7.2} ($\hat{\C{C}}_-h$ is the boundary value of $\hat{\C{C}}h$ from the right side of $\Sigma^{\diff}$ with $\hat{\C{C}}$ the Cauchy operator associated with $\Sigma^{\diff}$). Hence, increasing $T$ in \eqref{eq:wedge} if necessary, we have
\[
\norm{\hat{\C{C}}_{\hat w}}_{\C{B}(L^2(\Sigma^{\diff}))}\leq C<1,\quad(x,t)\in\D{S}.
\]
Then, in particular, $I-\hat{\C{C}}_{\hat w(x,t,\,\cdot\,)}\in\C{B}(L^2(\Sigma^{\diff}))$ is invertible for all $(x,t)\in\D{S}$, so we can define $\hat\mu(x,t,k)\in I+L^2(\Sigma^{\diff})$ by
\begin{equation}
\hat\mu=I+(I-\hat{\C{C}}_{\hat w})^{-1}\hat{\C{C}}_{\hat w}I,\quad(x,t)\in\D{S}.
\end{equation}
Standard estimates using the Neumann series show that 
\[
\norm{\hat\mu-I}_{L^2(\Sigma^{\diff})}\leq C\frac{\norm{\hat w}_{L^2(\Sigma^{\diff})}}{1-\norm{\hat{\C{C}}_{\hat w}}_{\C{B}(L^2(\Sigma^{\diff}))}}.
\]
Thus
\begin{equation}  \label{eq:muhatestim}
\norm{\hat\mu(x,t,\,\cdot\,)-I}_{L^2(\Sigma^{\diff})}\leq C\norm{\hat w}_{L^2(\Sigma^{\diff})}\leq C\sqrt{E_1'E_{\infty}},\quad(x,t)\in\D{S}.
 \end{equation} 
It follows that there exists a unique solution $m^{\diff}\in I+\dot E^2(\hat{\D{C}}\setminus\Sigma^{\diff})$ of the RH problem \eqref{eq:rhphat} for all sufficiently large $t$. This solution is given by
\begin{equation}
m^{\diff}(x,t,k)=I+\hat{\C{C}}(\hat\mu\hat w)=I+\frac{1}{2\pi\ii}\int_{\Sigma^{\diff}}\hat\mu(x,t,s)\hat w(x,t,s)\frac{\dd s}{s-k},\quad k\in\D{C}\setminus\Sigma^{\diff},\ (x,t)\in\D{S}.
\end{equation}
\subsection{Asymptotics of $\BS{m^{\diff}}$}

For each $(x,t)\in\D{S}$, we have
\begin{equation}   \label{eq:lim-mhat}
\lim_{k\to\infty}k\left(m^{\diff}(x,t,k)-I\right)=-\frac{1}{2\pi\ii}\int_{\Sigma^{\diff}}\hat\mu(x,t,k)\hat w(x,t,k)\dd k.
\end{equation}
Let us consider the contributions to the right-hand side of \eqref{eq:lim-mhat} from the different parts of $\Sigma^{\diff}$. All error terms in what follows will be uniform with respect to $(x,t)\in\D{S}$.

By \eqref{eq:whatestima} and \eqref{eq:muhatestim},
\begin{align*}
\int_{\Sigma'}\hat\mu(x,t,k)\hat w(x,t,k)\dd k
&=\int_{\Sigma'}\hat w(x,t,k)\dd k+\int_{\Sigma'}(\hat\mu(x,t,k)-I)\hat w(x,t,k)\dd k\\
&=\ord\left(\norm{\hat w}_{L^1(\Sigma')}\right)+\ord\left(\norm{\hat\mu-I}_{L^2(\Sigma')}\norm{\hat w}_{L^2(\Sigma')}\right)=\ord(\eul^{-ct}),\quad t\to+\infty.
\end{align*}
Hence the contribution to the integral in \eqref{eq:lim-mhat} from $\Sigma'$ is $\ord(\eul^{-ct})$. The contribution from $\partial D_{\epsilon}(\mu)$ to the right-hand side of \eqref{eq:lim-mhat} is
\begin{align*}
&-\frac{1}{2\pi\ii}\int_{\partial D_{\epsilon}(\mu)}\hat w(x,t,k)\dd k-\frac{1}{2\pi\ii}\int_{\partial D_{\epsilon}(\mu)}(\hat\mu(x,t,k)-I)\hat w(x,t,k)\dd k\\
&\quad=-\frac{1}{2\pi\ii}\int_{\partial D_{\epsilon}(\mu)}\left(m^{\model}(m^{\mu})^{-1}-I\right)\dd k+\ord\left(\norm{\hat\mu-I}_{L^2(\partial D_{\epsilon}(\mu))}\norm{\hat w}_{L^2(\partial D_{\epsilon}(\mu))}\right)\\
&\quad=\frac{T_{\mu}(x,t)}{\sqrt{t}}+\ord(t^{-1})+\ord\left(\sqrt{E_1'E_{\infty}}\,t^{-\frac{1}{2}}\right),\quad t\to+\infty,
\end{align*}
where
\begin{equation}  \label{eq:Tmu}
T_{\mu}(x,t)\coloneqq -\frac{Y_{\mu}(x,t,\mu)m_1^X(\xi)Y_{\mu}(x,t,\mu)^{-1}}{\psi_{\mu}(\xi,\mu)}\,.
\end{equation}
The function $\psi_{\mu}(k)\equiv\psi_{\mu}(\xi,k)$ and the constant $m_1^X\equiv m_1^X(\xi)$ are defined in \cite{BLS22}*{(6.17) \& (6.31)}, while $Y_{\mu}$ is defined as in \cite{BLS22}*{(6.26)} but with a different $m^{\model}$, which comes from Section~\ref{sec:model}:
\[
Y_{\mu}(x,t,k)\coloneqq m^{\model}(x,t,k)\eul^{-\ii h(k)\sigma_3}B(k)\tilde\delta(k)^{-\sigma_3}\delta_0(t)^{\sigma_3}.
\]
The contribution from $\C{X}$ to the right-hand side of \eqref{eq:lim-mhat} is
\[
\ord\left(\norm{\hat w}_{L^1(\C{X})}\right)+\ord\left(\norm{\hat\mu-I}_{L^2(\Sigma^{\diff})}\norm{\hat w}_{L^2(\C{X})}\right)=\ord\left(t^{-1}\ln t+\sqrt{E_1'E_{\infty}}\,t^{-\frac{3}{4}}\ln t\right),\quad t\to+\infty.
\]
By \eqref{eq:mmodbbound}, \eqref{eq:whatestimb}, and \eqref{eq:muhatestim}, the contribution from $\partial D_{\epsilon}(\beta)$ to the right-hand side of \eqref{eq:lim-mhat} is
\begin{align*}
&-\frac{1}{2\pi\ii}\int_{\partial D_{\epsilon}(\beta)}\hat w(x,t,k)\dd k-\frac{1}{2\pi\ii}\int_{\partial D_{\epsilon}(\beta)}(\hat\mu(x,t,k)-I)\hat w(x,t,k)\dd k\\
&\quad=-\frac{1}{2\pi\ii}\int_{\partial D_{\epsilon}(\beta)}\left(m^{\model}(m^{\beta})^{-1}-I\right)\dd k+\ord\left(\norm{\hat\mu-I}_{L^2(\partial D_{\epsilon}(\beta))}\norm{\hat w}_{L^2(\partial D_{\epsilon}(\beta))}\right)\\
&\quad=\frac{T_{\beta}(x,t)}{\sqrt{t}}+\ord\left(\sqrt{E_1'E_{\infty}}\,(t^{\abs{\Im\tilde\nu}-\frac{1}{2}}+t^{\abs{\Im\tilde\nu}+1}\abs{\alpha-\beta}^2)\right),\quad t\to+\infty,
\end{align*}
where
\begin{equation}  \label{eq:Tbeta}
T_{\beta}(x,t)\coloneqq -\frac{Y_{\beta}(x,t,\beta)t^{\frac{\ii\tilde\nu}{2}\sigma_3}m_1^Y(y,\tilde\nu)t^{-\frac{\ii\tilde\nu}{2}\sigma_3}Y_{\beta}(x,t,\beta)^{-1}}{\psi_{\beta}(\xi)}\,.
\end{equation}
The function $Y_{\beta}(x,t,k)$ is defined in \eqref{eq:Ybeta}, $m_1^Y(y,\tilde\nu)$ is defined in \eqref{eq:m1Y} with $y\equiv y(x,t)$ given by \eqref{eq:y}, and $\psi_{\beta}(\xi)$ is defined in \eqref{eq:psibeta}. The symmetries $\hat\mu(x,t,k)=\sigma_3\sigma_1\overline{\hat\mu(x,t,\bar k)}\sigma_1\sigma_3$ and $\hat w(x,t,k)=\sigma_3\sigma_1\overline{\hat w(x,t,\bar k)}\sigma_1\sigma_3$ imply that the contribution from $D_{\epsilon}(\bar\beta)$ to the right-hand side of \eqref{eq:lim-mhat} is
\begin{align*}
&-\frac{1}{2\pi\ii}\int_{\partial D_{\epsilon}(\bar\beta)}(\hat\mu\hat w)(x,t,k)\dd k=\frac{1}{2\pi\ii}\sigma_3\sigma_1\overline{\int_{\partial D_{\epsilon}(\beta)}(\hat\mu\hat w)(x,t,k)\dd k}\,\sigma_1\sigma_3\\
&\qquad\qquad\qquad=\frac{\sigma_3\sigma_1\overline{T_{\beta}(x,t)}\,\sigma_1\sigma_3}{\sqrt{t}}+\ord\left(\sqrt{E_1'E_{\infty}}\,(t^{\abs{\Im\tilde\nu}-\frac{1}{2}}+t^{\abs{\Im\tilde\nu}+1}\abs{\alpha-\beta}^2)\right),\quad t\to+\infty.
\end{align*}
The contribution from $\C{Y}=\Sigma^{(6)}\cap D_{\epsilon}(\beta)$ to the right-hand side of \eqref{eq:lim-mhat} is
\[
\ord\left(\norm{\hat w}_{L^1(\C{Y})}+\norm{\hat\mu-I}_{L^2(\C{Y})}\norm{\hat w}_{L^2(\C{Y})}\right)=\ord\left(E_1+\sqrt{E_1'E_{\infty}}\sqrt{E_1E_{\infty}}\right),\quad t\to+\infty.
\]
The contribution from $\Sigma^{(6)}\cap D_{\epsilon}(\bar\beta)$ is of the same order. By \eqref{eq:whatestimf} and \eqref{eq:muhatestim}, the contribution from $\Sigma''\coloneqq\gamma_{(\bar\beta,\beta)}\setminus\bar{\C{D}}$ is
\[
\ord\left(\norm{\hat w}_{L^1(\Sigma'')}\right)+\ord\left(\norm{\hat\mu-I}_{L^2(\Sigma'')}\norm{\hat w}_{L^2(\Sigma'')}\right)=\ord\left(t\abs{\alpha-\beta}^2+\sqrt{E_1'E_{\infty}}\,t\abs{\alpha-\beta}^2\right),\quad t\to+\infty.
\]
Collecting the above contributions, we find from \eqref{eq:lim-mhat} that
\begin{align}   \label{eq:mhatinfty}
&\lim_{k\to\infty}k(m^{\diff}(x,t,k)-I)=\frac{T_{\mu}(x,t)+T_{\beta}(x,t)+\sigma_3\sigma_1\overline{T_{\beta}(x,t)}\sigma_1\sigma_3}{\sqrt{t}}+\ord\left(\sqrt{E_1'E_{\infty}}\,t^{-\frac{1}{2}}\right)\notag\\
&\qquad+\ord\left(t^{-1}\ln t+\sqrt{E_1'E_{\infty}}\,t^{-\frac{3}{4}}\ln t\right)+\ord\left(\sqrt{E_1'E_{\infty}}\,(t^{\abs{\Im\tilde\nu}-\frac{1}{2}}+t^{\abs{\Im\tilde\nu}+1}\abs{\alpha-\beta}^2)\right)\notag\\
&\qquad+\ord\left(E_1+\sqrt{E_1'E_{\infty}}\sqrt{E_1E_{\infty}}\right)+\ord\left(t\abs{\alpha-\beta}^2+\sqrt{E_1'E_{\infty}}\,t\abs{\alpha-\beta}^2\right),\quad t\to+\infty.
\end{align}
Using that
\[
E_{\infty}\leq F,\quad E_1\leq t^{-\frac{1}{2}}F,\quad E_1'\leq F,
\]
where $F$ is given by \eqref{eq:F}, we can replace the error term with the simpler (and only slightly less sharp) expression
\[
\ord\left(F^2+t\abs{\alpha-\beta}^2\right).
\]
In particular, if $\xi=\xi_0$, then the error is $\ord\left(t^{2\abs{\Im\tilde\nu_0}-1}(\ln t)^{2\abs{\Im\tilde\nu_0}+4}\right)$.

\subsection{Asymptotics of $\BS{q}$}

Recalling the various transformations of Section~\ref{sec:transforms}, we have
\[
\hat m=\eul^{\ii tg^{(0)}\sigma_3}\eul^{\ii\tilde{h}(\xi,\infty)\sigma_3}m^{\diff}m^{\model}\eul^{-\ii\tilde{h}\sigma_3}\tilde\delta^{\sigma_3}\delta^{\sigma_3}\eul^{-\ii t(g(k)-\theta(k))\sigma_3}
\]
for all large $k$ in $\ii\,\D{R}_+$. It follows that
\begin{align*}
&\lim_{k\to\infty}k\left(\hat m(x,t,k)-I\right)_{12}\\
&\qquad=\eul^{\ii tg^{(0)}\sigma_3}\eul^{\ii\tilde h(\xi,\infty)\sigma_3}\lim_{k\to\infty}k\left(m^{\model}-I+(m^{\diff}-I)m^{\model}\right)_{12}\eul^{-\ii\tilde h(\xi,\infty)\sigma_3}\eul^{-\ii tg^{(0)}\sigma_3}\\
&\qquad=\eul^{\ii(tg^{(0)}+\tilde h(\xi,\infty))\hat\sigma_3}\Bigl(\lim_{k\to\infty}k\left(m^{\model}-I\right)_{12}+\lim_{k\to\infty}k(m^{\diff}-I)_{12}\Bigr).
\end{align*}
Hence
\begin{align*}
q(x,t)&=2\ii\lim_{k\to\infty}k\left(\hat m(x,t,k)\right)_{12}\\
&=2\ii\eul^{2\ii(tg^{(0)}(\xi)+\tilde h(\xi,\infty))}\Bigl(\lim_{k\to\infty}k\left(m^{\model}(x,t,k)\right)_{12}+\lim_{k\to\infty}k\left(m^{\diff}(x,t,k)\right)_{12}\Bigr).
\end{align*}
In view of \eqref{eq:mmodinfty} and \eqref{eq:mhatinfty}, this yields \eqref{eq:main} and completes the proof of Theorem~\ref{thm:main}.
\appendix
\section{An exactly solvable RH problem}  \label{sec:app}

We define the contour $Y\subset\D{C}$ by $Y=\cup_{j=1}^9Y_j$, where $Y_1,\dots,Y_4$ denote the four rays
\begin{equation}    \label{eq:rays}
Y_j=\accol{r\eul^{\frac{(j-1)\pi}{2}}\mid 1\leq r<\infty},\quad j=1,\dots,4,
\end{equation}
and $Y_5,\dots,Y_9$ denote the following arcs whose union is the unit circle:
\begin{alignat*}{3}
&Y_5=\left\lbrace\eul^{\ii\varphi}\,\Big\vert\,-\frac{\pi}{2}\leq\varphi\leq 0\right\rbrace,&\quad&Y_6=\left\lbrace\eul^{\ii\varphi}\,\Big\vert\, 0\leq\varphi\leq\frac{\pi}{2}\right\rbrace,&\quad&Y_7=\left\lbrace\eul^{\ii\varphi}\,\Big\vert\,\frac{\pi}{2}\leq\varphi\leq\pi\right\rbrace,\\
&Y_8=\left\lbrace\eul^{\ii\varphi}\,\Big\vert\,\pi\leq\varphi\leq\frac{5\pi}{4}\right\rbrace,&&Y_9=\left\lbrace\eul^{\ii\varphi}\,\Big\vert\,-\frac{3\pi}{4}\leq\varphi\leq-\frac{\pi}{2}\right\rbrace.&&
\end{alignat*}
We orient $Y$ as in Figure~\ref{fig:raysY}.
\begin{figure}[ht]
\centering\includegraphics[scale=.9]{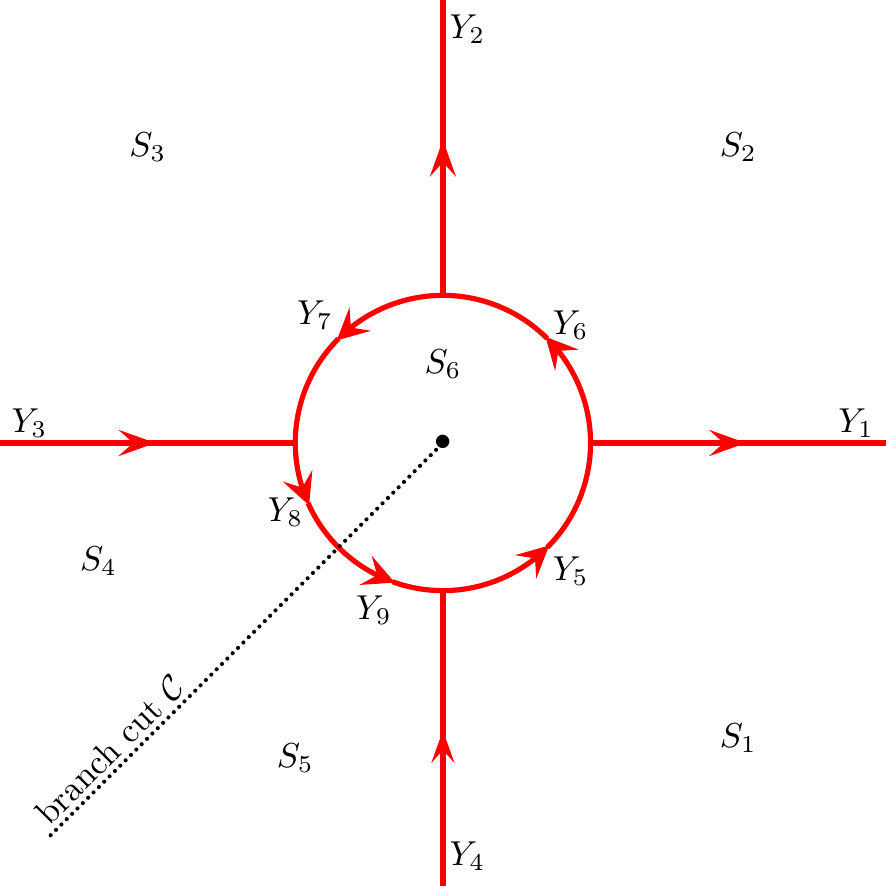}
\caption{The contour $Y=\cup_{j=1}^{9}Y_j$ and the domains $\accol{S_j}_1^6$ in the complex $\lambda$-plane.} 
\label{fig:raysY}
\end{figure}

We let $\C{C}$ denote a branch cut going from $0$ to $\infty$ in the third quadrant (see Figure~\ref{fig:raysY}). We let $\ln_{\C{C}}$ denote the function $\ln\lambda$ with cut along $\C{C}$ and branch fixed by the condition that $\ln_{\C{C}}\lambda>0$ for $\lambda>0$.

Given $y\in\D{R}$ and $\tilde\nu\in\D{C}\setminus\ii\D{Z}$, we define the jump matrix $v^Y(y,\tilde\nu,\lambda)$ for $\lambda\in Y$ by
\begin{equation}    \label{eq:vY}
v^Y(y,\tilde\nu,\lambda)=\begin{cases}
\begin{pmatrix}
1&0\\\frac{\rho\rho^*}{1+\rho\rho^*}\lambda^{2\ii\tilde\nu}\eul^{-\lambda^2-2y\lambda}&1\end{pmatrix}&\lambda\in Y_1,\\
\begin{pmatrix}
1&-\lambda^{-2\ii\tilde\nu}\eul^{\lambda^2+2y\lambda}\\0&1\end{pmatrix}&\lambda\in Y_2,\\
\begin{pmatrix}
1&0\\\rho\rho^*\lambda^{2\ii\tilde\nu}\eul^{-\lambda^2-2y\lambda}&1\end{pmatrix}&\lambda\in Y_3,\\
\begin{pmatrix}
1&-(1+\rho\rho^*)\lambda^{-2\ii\tilde\nu}\eul^{\lambda^2+2y\lambda}\\0&1\end{pmatrix}&\lambda\in Y_4,\\
\lambda^{-\ii\tilde\nu\sigma_3},&\lambda\in Y_5,\\
\lambda^{-\ii\tilde\nu\sigma_3}\begin{pmatrix}
1&0\\-\frac{\rho\rho^*}{1+\rho\rho^*}\eul^{-\lambda^2-2y\lambda}&1\end{pmatrix}&\lambda\in Y_6,\\
\lambda^{-\ii\tilde\nu\sigma_3}\begin{pmatrix}
\frac{1}{1+\rho\rho^*}&\eul^{\lambda^2+2y\lambda}\\-\frac{\rho\rho^*}{1+\rho\rho^*}\eul^{-\lambda^2-2y\lambda}&1\end{pmatrix}&\lambda\in Y_7,\\
\lambda^{-\ii\tilde\nu\sigma_3}\begin{pmatrix}
\frac{1}{1+\rho\rho^*}&\eul^{\lambda^2+2y\lambda}\\0&1+\rho\rho^*\end{pmatrix}&\lambda\in Y_8,\\
\lambda^{-\ii\tilde\nu\sigma_3}\begin{pmatrix}
1&(1+\rho\rho^*)\eul^{\lambda^2+2y\lambda}\\0&1\end{pmatrix}&\lambda\in Y_9,
\end{cases}
\end{equation}
where $\rho\rho^*\coloneqq\eul^{2\pi\tilde\nu}-1$ and the branch cut runs along $\C{C}$, so that $\lambda^a=\eul^{a\ln_{\C{C}}\lambda}$ for $a\in\D{C}$.

We consider the following family of RH problems parametrized by $y\in\D{R}$ and $\tilde\nu\in\D{C}\setminus\ii\D{Z}$:
\begin{equation}   \label{eq:rhpY}
\begin{cases}
m^Y(y,\tilde\nu,\,\cdot\,)\in I+\dot E^2(\D{C}\setminus Y),&\\
m_+^Y(y,\tilde\nu,\lambda)=m_-^Y(y,\tilde\nu,\lambda)v^Y(y,\tilde\nu,\lambda)&\text{for a.e. }\lambda\in Y.
\end{cases}
\end{equation}
Define the function $\beta^Y(\tilde\nu)$ by
\begin{equation}   \label{eq:bY}
\beta^Y(\tilde\nu)=\frac{(2\ii)^{\ii\tilde\nu+1}\sqrt{\pi}\,\eul^{\frac{5\pi\tilde\nu}{2}}}{(\eul^{2\pi\tilde\nu}-1)\Gamma(\ii\tilde\nu)},\quad\tilde\nu\in\D{C}\setminus\ii\D{Z}. 
\end{equation}
   
\begin{theorem}  \label{thm:rhpY}
For any choice of $y\in\D{R}$ and $\tilde\nu\in\D{C}\setminus\ii\D{Z}$, the RH problem \eqref{eq:rhpY} has a unique solution $m^Y(y,\tilde\nu,\lambda)$. This solution satisfies
\begin{equation}   \label{eq:mYas}
m^Y(y,\tilde\nu,\lambda)=I+\frac{m_1^Y(y,\tilde\nu)}{\lambda}+\ord(\lambda^{-2}),\quad\lambda\to\infty,
\end{equation}
where $m_1^Y$ is defined by
\begin{equation}   \label{eq:m1Y}
m_1^Y(y,\tilde\nu)\coloneqq\begin{pmatrix}
-\ii\tilde\nu y&-\frac{\beta^Y(\tilde\nu)}{2}\,\eul^{-y^2}\\-\frac{\ii\tilde\nu}{\beta^Y(\tilde\nu)}\,\eul^{y^2}&\ii\tilde\nu y\end{pmatrix},\quad y\in\D{R},\ \tilde\nu\in\D{C}\setminus\ii\D{Z},
\end{equation}
and the error term is uniform with respect to $\arg\lambda\in\croch{0,2\pi}$ and $\tilde\nu$ and $y$ in compact subsets of $\D{C}\setminus\ii\D{Z}$ and $\D{R}$, respectively. Moreover, for any compact subsets $K_1 \subset\D{R}$ and $K_2\subset\D{C}\setminus\ii\D{Z}$, we have
\begin{equation}   \label{eq:mYbound}
\sup_{y\in K_1}\sup_{\tilde\nu\in K_2}\sup_{\lambda\in\D{C}\setminus Y}\abs{m^Y(y,\tilde\nu,\lambda)}<\infty.
\end{equation}
\end{theorem}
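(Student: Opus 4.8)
The plan is to establish uniqueness by a determinant-plus-vanishing-lemma argument and existence by writing $m^Y$ down in closed form, reducing the problem in two steps to a classical parabolic cylinder (Weber) model Riemann--Hilbert problem. For uniqueness I would first note that every matrix in \eqref{eq:vY} is unimodular (the triangular blocks and $\lambda^{-\ii\tilde\nu\sigma_3}$ all have determinant $1$), so $\det m^Y$ has no jump across $Y$; it is bounded near the finitely many self-intersection points of $Y$ (the corners $1,\ii,-1,-\ii$ and the point $\eul^{5\pi\ii/4}$ where the cut $\C{C}$ meets the unit circle, where the singularities of $\lambda^{\pm\ii\tilde\nu}$ are mild) and tends to $1$ at infinity, hence $\det m^Y\equiv 1$. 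Then the difference of two solutions, $m^Y_1(m^Y_2)^{-1}$, is analytic across $Y$, lies in $I+\dot E^2$, and has at worst the same mild singularities at the corners, so it equals $I$ by the standard vanishing lemma for $\dot E^2$ Riemann--Hilbert problems (cf.\ \cites{Le17,Le18}).

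For existence I would construct $m^Y$ explicitly. The first step is to observe that the arcs $Y_5,\dots,Y_9$ together with the factors $\lambda^{-\ii\tilde\nu\sigma_3}$ are present precisely to ``wrap'' the branch-point structure around the origin, so that the sought $m^Y$ is analytic on the open unit disk. Accordingly I would define an auxiliary function $\hat m$ by multiplying $m^Y$ on the right, in the sectors $\accol{S_j}_1^6$ of Figure~\ref{fig:raysY}, by $\lambda^{\ii\tilde\nu\sigma_3}$ (branch along $\C{C}$) composed with the corresponding triangular factors from \eqref{eq:vY}; this turns $\hat m$ into the solution of a Riemann--Hilbert problem on the four rays $\arg\lambda\in\accol{0,\tfrac{\pi}{2},\pi,\tfrac{3\pi}{2}}$, now extended to the origin, whose jumps are upper/lower triangular with off-diagonal entries of the form $c_\pm\,\lambda^{\pm 2\ii\tilde\nu}\eul^{\pm(\lambda^2+2y\lambda)}$, the constants $c_\pm$ depending only on $\rho\rho^*=\eul^{2\pi\tilde\nu}-1$, while $\hat m$ acquires the standard endpoint singularity $\hat m\sim(\text{analytic})\,\lambda^{-\ii\tilde\nu\sigma_3}$ at $\lambda=0$. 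Writing $\eul^{\pm(\lambda^2+2y\lambda)}=\eul^{\pm y^2}\eul^{\pm(\lambda+y)^2}$, this is (a conjugate of) the classical parabolic cylinder model Riemann--Hilbert problem -- equivalently, the Riemann--Hilbert problem associated with $\painl$ specialized to the monodromy data at hand, which admits a closed-form solution.

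The second step is to solve this four-ray problem explicitly: a standard branch-sensitive conjugation of $\hat m$ by $\lambda^{-\ii\tilde\nu\sigma_3}\eul^{-\frac12(\lambda^2+2y\lambda)\sigma_3}$ reduces it to a $\Psi$ with constant (Stokes) jumps on the four rays, and the unique such $\Psi$ with the correct behavior at infinity is assembled from parabolic cylinder functions $D_a(\cdot)$ with index $a$ affine in $\tilde\nu$ and argument a constant multiple of $\lambda+y$; the connection (Stokes) formulas for $D_a$ then match the prescribed $c_\pm$. Undoing both conjugations and the wrapping of the first step yields a matrix that satisfies every jump in \eqref{eq:vY}, is analytic on the unit disk (the origin singularities of $\Psi$ being cancelled by the wrapping factors by construction) and belongs to $I+\dot E^2(\D{C}\setminus Y)$; by the uniqueness part this matrix is $m^Y$. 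The expansion \eqref{eq:mYas}, together with \eqref{eq:m1Y}--\eqref{eq:bY}, then follows by feeding the large-argument asymptotics of $D_a$ into the explicit formula: the factors $\Gamma(\ii\tilde\nu)$ and $(2\ii)^{\ii\tilde\nu}$ in $\beta^Y$ come from the reflection coefficient of $D_a$, while the shift $\lambda\mapsto\lambda+y$ produces the diagonal entries $\mp\ii\tilde\nu y$ and the factors $\eul^{\pm y^2}$ in $m_1^Y$; all error terms are uniform in $\arg\lambda\in\croch{0,2\pi}$ and in $(y,\tilde\nu)$ on compact subsets of $\D{R}\times(\D{C}\setminus\ii\D{Z})$.

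Finally, \eqref{eq:mYbound} is immediate once $m^Y$ is displayed in closed form: it is built from $D_a$, powers, exponentials and $\Gamma$, all jointly continuous in $(y,\tilde\nu,\lambda)$ for $\tilde\nu\notin\ii\D{Z}$, with only the mild stated singularities on $Y$, hence bounded on $\accol{y\in K_1,\ \tilde\nu\in K_2,\ \lambda\in\D{C}\setminus Y}$ for any compacts $K_1\subset\D{R}$, $K_2\subset\D{C}\setminus\ii\D{Z}$. I expect the main obstacle to be the bookkeeping: tracking the several branch cuts ($\C{C}$, the cuts of $\lambda^{\pm\ii\tilde\nu}$, the boundaries of $S_1,\dots,S_6$), fixing orientations consistently, matching the Stokes data of $D_a$ to the precise entries of \eqref{eq:vY}, and in particular verifying that the first-step wrapping cancels the origin singularity exactly so that $m^Y$ extends analytically across the unit disk; once this is set up, the remaining computations are routine.
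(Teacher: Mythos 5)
Your proposal follows essentially the same route as the paper: uniqueness from $\det v^Y\equiv 1$ together with a Liouville/vanishing argument, and existence by the explicit parabolic-cylinder formula of equations \eqref{eq:psi}--\eqref{eq:mY}, with the coefficient $m_1^Y$ and the uniform bound \eqref{eq:mYbound} read off from the large-argument expansion \eqref{eq:Das}. The paper passes directly from $m^Y$ to the piecewise-defined $\Psi$ in a single conjugation, whereas you split it into two steps (first removing the circle and cut jumps, then conjugating by $\lambda^{-\ii\tilde\nu\sigma_3}\eul^{-(\lambda^2/2+y\lambda)\sigma_3}$); this is the same computation organized differently, and the Lax-pair derivation of the parabolic-cylinder ansatz which you bypass appears only in Remark~\ref{rem:mY}, not in the proof itself.
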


\begin{proof}
Uniqueness of the solution $m^Y$ follows because $\det v^Y=1$. Fix $y\in\D{R}$ and $\tilde\nu\in\D{C}\setminus\ii\D{Z}$. Let the branch cut run along $\C{C}$. Define the $2\times 2$-matrix valued function $\psi(y,\tilde\nu,\lambda)$ by
\begin{equation}    \label{eq:psi}
\psi=\eul^{-\frac{y^2}{2}\sigma_3}\begin{pmatrix}
\eul^{-\frac{\pi\tilde\nu}{2}}D_{-\ii\tilde\nu}(\ii\sqrt{2}(y+\lambda))&-\frac{\beta^Y(\tilde\nu)}{\sqrt{2}}D_{\ii\tilde\nu-1}(\sqrt{2}(y+\lambda))\\
\frac{\sqrt{2}}{\beta^Y(\tilde\nu)}\eul^{-\frac{\pi\tilde\nu}{2}}\tilde\nu D_{-\ii\tilde\nu-1}(\ii\sqrt{2}(y+\lambda))&D_{\ii\tilde\nu}(\sqrt{2}(y+\lambda))\end{pmatrix}2^{\frac{\ii\tilde\nu}{2}\sigma_3},
\end{equation}
where $D_a(z)$ denotes the parabolic cylinder function. Define the matrices $R_j\equiv R_j(\tilde\nu)$, $j=1,\dots,4$, by
\begin{alignat*}{2}
R_1&=\begin{pmatrix}1&0\\\frac{\rho\rho^*}{1+\rho\rho^*}&1\end{pmatrix},&\qquad&R_2=\begin{pmatrix}1&-1\\0&1\end{pmatrix},\\
R_3&=\begin{pmatrix}1&0\\\rho\rho^*&1\end{pmatrix},&&R_4=\begin{pmatrix}1&-(1+\rho\rho^*)\\0&1\end{pmatrix}.
\end{alignat*}
We claim that the solution of \eqref{eq:rhpY} is given explicitly in terms of parabolic cylinder functions by
\begin{equation}   \label{eq:mY}
m^Y(y,\tilde\nu,\lambda)=
\begin{cases}
\Psi(y,\tilde\nu,\lambda)\eul^{-(\frac{\lambda^2}{2}+y\lambda)\sigma_3}\lambda^{\ii\tilde\nu\sigma_3},&\lambda\in S_1\cup\dots\cup S_5,\\
\Psi(y,\tilde\nu,\lambda)\eul^{-(\frac{\lambda^2}{2}+y\lambda)\sigma_3}, &\lambda\in S_6,
\end{cases}
\end{equation}
where the sectionally analytic function $\Psi$ is defined by
\begin{equation}   \label{eq:Psi}
\Psi(y,\tilde\nu,\lambda)=\begin{cases}
\psi,&\lambda\in S_1,\\
\psi R_1,&\lambda\in S_2,\\
\psi R_1R_2,&\lambda\in S_3,\\
\psi R_1R_2R_3^{-1},&\lambda\in S_4,\\
\psi R_4,&\lambda\in S_5,\\
\psi,&\lambda\in S_6.
\end{cases}
\end{equation}
Recall that $D_a(z)$ is an entire function of both $a$ and $z$. In particular, $m^Y(y,\tilde\nu,\lambda)$ is an analytic function of $\lambda\in\D{C}\setminus Y$. It is easily seen from the definition \eqref{eq:Psi} of $\Psi$ that $m^Y$ satisfies the jump condition in \eqref{eq:rhpY}.

For each $\delta>0$, the parabolic cylinder function satisfies the asymptotic formula \cite{Ol10}
\begin{align}   \label{eq:Das}
D_a(z)
&=z^a\eul^{-\frac{z^2}{4}}\left(1-\frac{a(a-1)}{2z^2}+\ord(z^{-4})\right)\notag\\
&\quad-\frac{\sqrt{2\pi}\eul^{\frac{z^2}{4}}z^{-a-1}}{\Gamma(-a)}\left(1+\frac{(a+1)(a+2)}{2z^2}+\ord(z^{-4})\right)\notag\\
&\quad\times\begin{cases}
0,&\arg z\in\left\lbrack-\frac{3\pi}{4}+\delta,\frac{3\pi}{4}-\delta\right\rbrack,\\
\eul^{\ii\pi a},&\arg z\in\left\lbrack\frac{\pi}{4}+\delta,\frac{5\pi}{4}-\delta\right\rbrack,\\
\eul^{-\ii\pi a},&\arg z\in\left\lbrack-\frac{5\pi}{4}+\delta,-\frac{\pi}{4}-\delta\right\rbrack,
\end{cases}\quad z\to\infty,\ a\in\D{C},
\end{align}
where the error terms are uniform with respect to $a$ in compact subsets and $\arg z$ in the given ranges. Tedious computations using this formula in the explicit formula \eqref{eq:mY} show that $m^Y$ satisfies \eqref{eq:mYas}, together with \eqref{eq:m1Y}, uniformly for $\tilde\nu$ and $y$ in compact subsets and for $\arg\lambda\in\croch{0,2\pi}$. It follows that $m^Y$ satisfies \eqref{eq:mYbound} and that $m^Y(y,\tilde\nu,\,\cdot\,)\in I+\dot E^2(\D{C}\setminus Y)$. This completes the proof of the theorem.
\end{proof}

\begin{remark}  \label{rem:mY}
The explicit formula for $m^Y$ given in \eqref{eq:mY} can be derived as follows. Suppose $m^Y$ is a solution of \eqref{eq:rhpY}. Let
\[
\Psi\coloneqq m^Y\eul^{\bigl(\frac{\lambda^2}{2}+y\lambda\bigr)\sigma_3}\lambda^{-\ii\tilde\nu\sigma_3}.
\]
Then $\Psi$ satisfies the jump condition
\[
\Psi_+=\Psi_-v^{\Psi},\quad\lambda\in Y\cup\left\lbrace\arg\lambda=-\tfrac{3\pi}{4}\right\rbrace,
\]
where the jump matrix $v^{\Psi}$ is given on $Y$ by (for $j=1,\dots,9$, $v_j^{\Psi}$ denotes the restriction of $v^{\Psi}$ to $Y_j$)
\begin{alignat*}{3}
&v_j^{\Psi}=R_j,\ j=1,\dots,4,&\qquad&v_5^{\Psi}=I,&\qquad&v_6^{\Psi}=\begin{pmatrix}1&0\\-\frac{\rho\rho^*}{1+\rho\rho^*}&1\end{pmatrix},\\
&v_7^{\Psi}=\begin{pmatrix}\frac{1}{1+\rho\rho^*}&1\\-\frac{\rho\rho^*}{1+\rho\rho^*}&1\end{pmatrix},&&v_8^{\Psi}=\begin{pmatrix}\frac{1}{1+\rho\rho^*}&0\\0&1+\rho\rho^*\end{pmatrix},&&v_9^{\Psi}=\begin{pmatrix}1&1+\rho\rho^*\\0&1\end{pmatrix},
\end{alignat*}
and on the branch cut (oriented towards the origin) by
\[
v^{\Psi}=\begin{pmatrix}1+\rho\rho^*&0\\0&\frac{1}{1+\rho\rho^*}\end{pmatrix},\quad\arg\lambda=-\frac{3\pi}{4}\,.
\]
Since $v^{\Psi}$ is independent of $\lambda$ and $y$, we conclude that
\[
A\coloneqq\Psi_{\lambda}\Psi^{-1}=m_{\lambda}^Y(m^Y)^{-1}+(\lambda+y)m^Y\sigma_3(m^Y)^{-1}-\ii\tilde\nu\lambda^{-1}m^Y\sigma_3(m^Y)^{-1}
\]
and
\[
U\coloneqq\Psi_y\Psi^{-1}=m_y^Y(m^Y)^{-1}+\lambda m^Y\sigma_3(m^Y)^{-1}
\]
are entire functions. Assuming that
\[
m^Y(y,\tilde\nu,\lambda)=I+\frac{m_1^Y(y,\tilde\nu)}{\lambda}+\ord(\lambda^{-2}),\quad\lambda\to\infty,
\]
we deduce that
\begin{align*}
A(y,\tilde\nu,\lambda)&=\lambda A_1(y,\tilde\nu)+A_0(y,\tilde\nu),\\
U(y,\tilde\nu,\lambda)&=\lambda U_1(y,\tilde\nu)+U_0(y,\tilde\nu).
\end{align*}
The terms of order $\ord(\lambda)$ show that $A_1=U_1=\sigma_3$ while the terms of order $\ord(1)$ show that
\[
A_0=y\sigma_3+\croch{m_1^Y,\sigma_3},\quad U_0=\croch{m_1^Y,\sigma_3}.
\]
Defining $w(y,\tilde\nu)$ and $z(y,\tilde\nu)$ by $w\coloneqq -2(m_1^Y)_{12}$ and $z\coloneqq\ii\tilde\nu+w(m_1^Y)_{21}$, we find that $\Psi$ satisfies the Lax pair equations
\begin{equation}  \label{eq:PsiLax}
\Psi_{\lambda}=A\Psi,\quad \Psi_y=U\Psi,
\end{equation}
where
\[
A=\begin{pmatrix}\lambda+y&w(y)\\2\frac{z(y)-\ii\tilde\nu}{w(y)}&-\lambda-y\end{pmatrix},\quad U=\begin{pmatrix}\lambda&w(y)\\2\frac{z(y)-\ii\tilde\nu}{w(y)}&-\lambda\end{pmatrix}.
\]
The $(12)$ element of the compatibility condition $A_y-U_{\lambda}+\croch{A,U}=0$ of this Lax pair implies that $w_y+2yw=0$. This shows that $w$ has the form
\[
w(y,\tilde\nu)=\beta^Y(\tilde\nu)\eul^{-y^2},
\]
where $\beta^Y(\tilde\nu)$ is a function which is independent of $y$. The $(21)$ element of the compatibility condition then yields $z_y=0$; hence $z=z(\tilde\nu)$. Substituting these expressions for $w$ and $z$ into the first column of the Lax pair equation $\Psi_{\lambda}=A\Psi$ yields
\[
\Psi_{11\lambda}-(\lambda+y)\Psi_{11}-\beta^Y\eul^{-y^2}\Psi_{21}=0,\quad\Psi_{21\lambda}+(\lambda+y)\Psi_{21}-\frac{2\eul^{y^2}(z-\ii\tilde\nu)}{\beta^Y}\Psi_{11}=0.
\]
This linear system has the general solution
\begin{align*}
\Psi_{11}(y,\tilde\nu,\lambda)&=C_1(y,\tilde\nu)D_{\ii\tilde\nu-1-z}(\sqrt{2}(\lambda+y))+C_2(y,\tilde\nu)D_{z-\ii\tilde\nu}(\ii\sqrt{2}(y+\lambda)),\\
\Psi_{21}(y,\tilde\nu,\lambda)&=C_3(y,\tilde\nu)D_{\ii\tilde\nu-z}(\sqrt{2}(\lambda+y))+C_4(y,\tilde\nu)D_{z-\ii\tilde\nu-1}(\ii\sqrt{2}(y+\lambda)),
\end{align*}
where $C_j(y,\tilde\nu)$, $j=1,\dots,4$, are locally independent of $\lambda$ (but, in general, the values of the $C_j$ change as $\lambda$ crosses one of the contours $Y_j$). We determine the $y$-dependence of the $C_j$ by substituting these expressions into the first column of the equation $\Psi_y=U\Psi$. The first column of $\Psi_y=U\Psi$ is satisfied provided that
\[
C_j(y,\tilde\nu)=\eul^{-\frac{y^2}{2}}B_j(\tilde\nu),\quad j=1,2;\qquad C_j(y,\tilde\nu)=\eul^{\frac{y^2}{2}}B_j(\tilde\nu),\quad j=3,4,
\]
where the functions $B_j(\tilde\nu)$ are independent of $y$.

Let us first suppose that $\lambda\in S_1$. Utilizing the asymptotic formula \eqref{eq:Das} for the range $\arg z\in\left\lbrack-\frac{3\pi}{4}+\delta,\frac{3\pi}{4}-\delta\right\rbrack$ we see that the condition
\begin{equation}   \label{eq:Psi11}
\Psi_{11}\eul^{-\frac{\lambda^2}{2}-y\lambda}\lambda^{\ii\tilde\nu}=1+\ord(\lambda^{-1}),\quad\lambda\to\infty,
\end{equation}
implies that $z(\tilde\nu)=0$ and
\[
\begin{cases}
C_1=0,&\\
C_2=\eul^{-\frac{y^2}{2}}2^{\frac{\ii\tilde\nu}{2}}\eul^{-\frac{\pi\tilde\nu}{2}},&
\end{cases}\quad\text{in }S_1.
\]
Similarly, the condition that
\begin{equation} \label{eq:Psi21}
\Psi_{21}\eul^{-\frac{\lambda^2}{2}-y\lambda}\lambda^{\ii\tilde\nu}=\frac{(m_1^Y)_{21}}{\lambda}+\ord(\lambda^{-2}),\quad\lambda\to\infty,
\end{equation}
implies that
\[
\begin{cases}
C_3=0,&\\
C_4=\eul^{\frac{y^2}{2}}\frac{2^{\frac{1+\ii\tilde\nu}{2}}\eul^{-\frac{\pi\tilde\nu}{2}}\tilde\nu}{\beta^Y(\tilde\nu)},&
\end{cases}\quad\text{in }S_1.
\]
This yields the first column of the definition \eqref{eq:psi} of $\psi$. The second column is derived in a similar way using the second columns of the Lax pair equations \eqref{eq:PsiLax}.

Let us now assume that $\lambda\in S_2$. Then we use the asymptotic formula \eqref{eq:Das} for the range $\arg z\in\croch{-\frac{3\pi}{4}+\delta,\frac{3\pi}{4}-\delta}$ to compute the asymptotics of $D_{\ii\tilde\nu-z}(\sqrt{2}(\lambda+y))$ and $D_{\ii\tilde\nu-1-z}(\sqrt{2}(\lambda+y))$, whereas we use \eqref{eq:Das} for the range $\arg z\in\croch{\frac{\pi}{4}+\delta,\frac{5\pi}{4}-\delta}$ to compute the asymptotics of $D_{z-\ii\tilde\nu}(\ii\sqrt{2}(\lambda+y))$ and $D_{z-\ii\tilde\nu-1}(\ii\sqrt{2}(\lambda+y))$. The conditions \eqref{eq:Psi11} and \eqref{eq:Psi21} then imply
\[
C_1=\eul^{-\frac{y^2}{2}}\frac{\sqrt{\pi}\,2^{\frac{1}{2}+\frac{\ii\tilde\nu}{2}}\tilde\nu}{\Gamma(\ii\tilde\nu+1)},\quad C_2=\eul^{-\frac{y^2}{2}}2^{\frac{\ii\tilde\nu}{2}}\eul^{-\frac{\pi\tilde\nu}{2}},\quad C_3=\eul^{\frac{y^2}{2}}\frac{\ii\sqrt{\pi}\,2^{1+\frac{\ii\tilde\nu}{2}}}{\beta^Y\Gamma(\ii\tilde\nu)},\quad C_4=\eul^{\frac{y^2}{2}}\frac{2^{\frac{1}{2}+\frac{\ii\tilde\nu}{2}}\eul^{-\frac{\pi\tilde\nu}{2}}\tilde\nu}{\beta^Y}
\]
in $S_2$. Similar computations apply to the second column. This gives an expression for $\Psi$ in $S_2$. We then use, for example, the $(21)$ element of the jump relation $(\Psi_-)^{-1}\Psi_+=R_1$ valid for $\lambda\in Y_1$ to determine $\beta^Y$ in terms of $\tilde\nu$. This yields the expression \eqref{eq:bY} for $\beta^Y(\tilde\nu)$. Finally, we can use the jump matrix $v^{\Psi}$ to obtain an expression for $\Psi$ also for $\lambda\in S_j$, $j=3,\dots,6$. This leads to the expression \eqref{eq:mY} for $m^Y$.
\end{remark}

\begin{remark}  \label{rem:p4}
The RH problem \eqref{eq:rhpY} is closely related to the RH problem associated with Painlev\'e~IV. More precisely, consider the Painlev\'e~IV equation
\begin{equation}    \label{eq:p4}
u_{yy}=\frac{u_y^2}{2u}+\frac{3}{2}u^3+4yu^2+(2+2y^2-4\Theta_{\infty})u-\frac{8\Theta^2}{u}\,,
\end{equation}
where $\Theta_{\infty}$ and $\Theta$ are constant parameters. The jump matrix $v^{\painl}(y,\tilde\nu,\lambda)$ of the RH problem associated with \eqref{eq:p4} has the form (see \cite{Fo06}*{p.~184}):
\begin{equation}   \label{eq:vp4}
v^{\painl}=\begin{cases}
\begin{pmatrix}1&0\\s_1\lambda^{2\Theta_{\infty}}\eul^{-\lambda^2-2y\lambda}&1\end{pmatrix},&\lambda\in Y_1,\\
\begin{pmatrix}1&s_2\lambda^{-2\Theta_{\infty}}\eul^{\lambda^2+2y\lambda}\\0&1\end{pmatrix},&\lambda\in Y_2,\\
\begin{pmatrix}1&0\\-s_3\lambda^{2\Theta_{\infty}}\eul^{-\lambda^2-2y\lambda}&1\end{pmatrix},&\lambda\in Y_3,\\
\begin{pmatrix}1&-s_4(\lambda^{-2\Theta_{\infty}})_+\eul^{\lambda^2+2y\lambda}\\0&1\end{pmatrix},&\lambda\in Y_4,\\
\lambda^{-\Theta_{\infty}\sigma_3}E^{-1}\lambda^{-\Theta\sigma_3},&\lambda\in Y_5,\\
\lambda^{-\Theta_{\infty}\sigma_3}\begin{pmatrix}1&0\\-s_1\eul^{-\lambda^2-2y\lambda}&1\end{pmatrix}E^{-1}\lambda^{-\Theta\sigma_3},&\lambda\in Y_6,\\
\lambda^{-\Theta_{\infty}\sigma_3}\begin{pmatrix}1+s_1s_2&-s_2\eul^{\lambda^2+2y\lambda}\\-s_1\eul^{-\lambda^2-2y\lambda}&1\end{pmatrix}E^{-1}\lambda^{-\Theta\sigma_3},&\lambda\in Y_7,\\
\lambda^{-\Theta_{\infty}\sigma_3}\begin{pmatrix}1+s_1s_2&-s_2\eul^{\lambda^2+2y\lambda}\\\left(-s_1-(1+s_1s_2)s_3\right)\eul^{-\lambda^2-2y\lambda}&1+s_2s_3\end{pmatrix}E^{-1}\lambda^{-\Theta\sigma_3},&\lambda\in Y_8\cup Y_9,
\end{cases}
\end{equation}
where the branch cut in \eqref{eq:vp4} runs along $\ii\D{R}_-$ (so that $\lambda^a=\eul^{a(\ln\abs{\lambda}+\ii\arg\lambda)}$ with $\arg \lambda\in(-\pi/2,3\pi/2]$ for $a\in\D{C}$) and $(\lambda^{-2\Theta_{\infty}})_+$ denotes the boundary value of $\lambda^{-2\Theta_{\infty}}$ from the left. The complex constants $\accol{s_j}_1^4$ in \eqref{eq:vp4} parametrize the solutions of \eqref{eq:p4} and must obey the relation
\[
(1+s_2s_3)\eul^{2\ii\pi\Theta_{\infty}}+\left(s_1s_4+(1+s_3s_4)(1+s_1s_2)\right)\eul^{-2\ii\pi\Theta_{\infty}}=2\cos(2\pi\Theta),
\]
while $E$ is a certain unimodular eigenmatrix. Letting $\Theta_{\infty}=\ii\tilde\nu$, $\Theta=0$, and
\begin{equation}   \label{eq:s1234}
s_1=\frac{\rho\rho^*}{1+\rho\rho^*},\quad s_2=-1,\quad s_3=-\rho\rho^*,\quad s_4=\frac{1}{1+\rho\rho^*},
\end{equation}
we can take $E$ to be the identity matrix. Then, after shifting the branch cut from $\ii\,\D{R}_-$ to $\C{C}$, the jump matrix $v^{\painl}$ defined in \eqref{eq:vp4} reduces exactly to the jump matrix $v^Y$ in \eqref{eq:vY}. Thus the RH problem \eqref{eq:rhpY} can be viewed as a special case of the RH problem associated with Painlev\'e~IV. However, the connection with Painlev\'e~IV only applies for $\Theta\neq n/2$, $n\in\D{Z}$. In our case, this connection breaks down because $\Theta=0$.
\end{remark}

\begin{acknowledgements*}
J.~Lenells acknowledges support from the G\"oran Gustafsson Foundation, the Ruth and Nils-Erik Stenb\"ack Foundation, the Swedish Research Council, Grant No.~2015-05430, and the European Research Council, Grant Agreement No.~682537.
\end{acknowledgements*}
\begin{bibdiv}
\begin{biblist}
\bib{AS}{book}{
   author={Ablowitz, M.J.},
   author={Segur, H.},
   title={Solitons and the inverse scattering transform},
     publisher={SIAM},
   date={1981},
}
\bib{BG15}{article}{
   author={Bertola, M.},
   author={Giavedoni, P.},
   title={A degeneration of two-phase solutions of the focusing
nonlinear Schr\"odinger equation via Riemann--Hilbert problems},
   journal={J. Math. Phys.},
   volume={56},
   date={2015},
   pages={061507, 17},
}
\bib{BK14}{article}{
   author={Biondini, Gino},
   author={Kova\v ci\v c, Gregor},
   title={Inverse scattering transform for the focusing nonlinear
   Schr\"odinger equation with nonzero boundary conditions},
   journal={J. Math. Phys.},
   volume={55},
   date={2014},
   number={3},
   pages={031506, 22},
}
\bib{BM16}{article}{
   author={Biondini, Gino},
   author={Mantzavinos, Dionyssios},
   title={Universal nature of the nonlinear stage of modulational
   instability},
   journal={Phys. Rev. Lett.},
   volume={116},
   date={2016},
   number={4},
   pages={043902},
}
\bib{BM17}{article}{
   author={Biondini, Gino},
   author={Mantzavinos, Dionyssios},
   title={Long-time asymptotics for the focusing nonlinear Schr\"odinger
   equation with nonzero boundary conditions at infinity and asymptotic
   stage of modulational instability},
   journal={Comm. Pure Appl. Math.},
   volume={70},
   date={2017},
   number={12},
   pages={2300--2365},
}
\bib{BLS21}{article}{
   author={Boutet de Monvel, Anne},
   author={Lenells, Jonatan},
   author={Shepelsky, Dmitry},
   title={The focusing NLS equation with step-like oscillating
   background: scenarios of long-time asymptotics},
   journal={Commun. Math. Phys.},
   volume={383},
   date={2021},
   number={2},
   pages={893--952},
}
\bib{BLS22}{article}{
   author={Boutet de Monvel, Anne},
   author={Lenells, Jonatan},
   author={Shepelsky, Dmitry},
   title={The focusing NLS equation with step-like oscillating
   background: the genus 3 sector},
   journal={Commun. Math. Phys.},
   date={2022},
   status={to appear},
   eprint={https://arXiv:2005.02822},
}
\bib{BV07}{article}{
   author={Buckingham, Robert},
   author={Venakides, Stephanos},
   title={Long-time asymptotics of the nonlinear Schr\"odinger
   equation shock problem},
   journal={Comm. Pure Appl. Math.},
   volume={60},
   date={2007},
   number={9},
   pages={1349--1414},
}
\bib{DVZ94}{article}{
   author={Deift, P.},
   author={Venakides, S.},
   author={Zhou, X.},
   title={The collisionless shock region for the long-time behavior of
solutions of the KdV equation},
   journal={Comm. Pure Appl. Math.},
   volume={47},
   date={1994},
   pages={199--206},
} 
\bib{DVZ97}{article}{
   author={Deift, P.},
   author={Venakides, S.},
   author={Zhou, X.},
   title={New results in small dispersion KdV by an extension of the
   steepest descent method for Riemann-Hilbert problems},
   journal={Internat. Math. Res. Notices},
   date={1997},
   number={6},
   pages={286--299},
}
\bib{DZ93}{article}{
   author={Deift, P.},
   author={Zhou, X.},
   title={A steepest descent method for oscillatory Riemann-Hilbert
   problems. Asymptotics for the MKdV equation},
   journal={Ann. of Math. (2)},
   volume={137},
   date={1993},
   number={2},
   pages={295--368},
}
\bib{Fo06}{book}{
   author={Fokas, Athanassios S.},
   author={Its, Alexander R.},
   author={Kapaev, Andrei A.},
   author={Novokshenov, Victor Yu.},
   title={Painlev\'e transcendents},
   series={Mathematical Surveys and Monographs},
   volume={128},
   note={The Riemann-Hilbert approach},
   publisher={American Mathematical Society, Providence, RI},
   date={2006},
   pages={xii+553},
}
\bib{KLM03}{book}{
   title={Semiclassical Soliton Ensembles for the Focusing Nonlinear 
	Schr\"odinger Equation},
   author={Kamvissis, Spyridon},
   author={McLaughlin, Kenneth D.T-R.},
   author={Miller, Peter D.},
   series={Annals of Mathematics Studies},
   volume={169},
   publisher={Princeton University Press, Cambridge},
   date={2003},
}
\bib{Le17}{article}{
   author={Lenells, Jonatan},
   title={The Nonlinear Steepest Descent Method for Riemann--Hilbert
   Problems of Low Regularity},
   journal={Indiana Math. J.},
   volume={66},
   date={2017},
   number={4},
   pages={1287--1332},
}
\bib{Le18}{article}{
   author={Lenells, Jonatan},
   title={Matrix Riemann--Hilbert problems with jumps across Carleson
   contours},
   journal={Monatsh. Math.},
   volume={186},
   date={2018},
   number={1},
   pages={111--152},
}
\bib{Ol10}{collection}{
   title={NIST handbook of mathematical functions},
   editor={Olver, Frank W. J.},
   editor={Lozier, Daniel W.},
   editor={Boisvert, Ronald F.},
   editor={Clark, Charles W.},
   publisher={Cambridge University Press, Cambridge},
   date={2010},
   pages={xvi+951},
}
\end{biblist}
\end{bibdiv}
\end{document}